\def\thesection{\arabic{section}}
\def\theequation{\thesection.\arabic{equation}}
\def\R{\mathbb{R}}
\newcommand{\e}{\varepsilon}
\newcommand{\Om} {\Omega}
\newcommand{\noi} {\noindent}
\markboth{\small } {\small Higher H\"older regularity for the mixed fractional equation}
\def\theequation{\@arabic{\c@section}.\@arabic{\c@equation}}
\newtheorem{Theorem}{Theorem}[section]
\newtheorem{Lemma}[Theorem]{Lemma}
\newtheorem{prop}[Theorem]{Proposition}
\newtheorem{Corollary}[Theorem]{Corollary}
\newtheorem{Remark}[Theorem]{Remark}
\newtheorem{Definition}[Theorem]{Definition}
\begin{document}

{\vspace{0.01in}}

\title{Higher H\"older regularity for fractional $(p,q)$-Laplace equations}

\author{Prashanta Garain and Erik Lindgren}

\maketitle

\begin{abstract}\noindent
We study the fractional $(p,q)$-Laplace equation
$$
(-\Delta_p)^s u +(-\Delta_q)^t u= 0
$$ for $s,t\in(0,1)$ and $p,q\in(1,\infty)$.  We establish Hölder estimates with an explicit exponent. As a consequence, we derive a Liouville-type theorem. Our approach builds on techniques previously developed for the fractional 
$p$-Laplace equation, relying on a Moser-type iteration for difference quotients. 
\end{abstract}

\maketitle

\noi {Keywords: Fractional $p$-Laplacian, nonlocal elliptic equations of mixed order, degenerate and singular nonlocal equations, H\"older regularity, Moser iteration.}

\noi{\textit{2020 Mathematics Subject Classification: 35B65, 35J75, 35R09.}}

\bigskip

\tableofcontents

\section{Introduction}
We study the pointwise regularity of the nonlinear, nonlocal and anisotropic equation 
\begin{equation}
\label{eq:maineq}
(-\Delta_p)^s u + (-\Delta_q)^t u=0.
\end{equation}
Here $s,t\in(0,1)$, $p,q\in(1,\infty)$ and 
\begin{equation}\label{fracplap}
(-\Delta_p)^{s}u(x)=\text{P.V.}\int_{\mathbb{R}^N}\frac{|u(x)-u(y)|^{p-2}(u(x)-u(y))}{|x-y|^{N+ps}}\,dy,
\end{equation}
is the fractional $p$-Laplace operator, where P.V. denotes the principal value. This operator
appears naturally in the Euler-Lagrange equation for the functional
\[
u\mapsto \iint_{\mathbb{R}^N\times \mathbb{R}^N} \frac{|u(x)-u(y)|^p}{|x-y|^{N+s\,p}}\,dx\,dy.
\]
The main purpose of this article is to establish H\"older regularity of weak solutions of equation \eqref{eq:maineq}, with an explicit H\"older exponent. We prove that these are locally H\"older continuous for any exponent strictly less than the threshold
 $$\min\left(1,\max\left(\frac{sp}{p-1},\frac{tq}{q-1}\right)\right).
 $$The main result is presented in detail in Theorem \ref{teo:1higher} in the next subsection.

\subsection{Main result}
Our main result is presented below. For relevant notation and definitions, see Section \ref{sec:prel}. The proof of the theorem is divided into three different cases depending on if the exponents $p$ and $q$ are larger or smaller than $2$. See Sections \ref{sec:I}--\ref{sec:III}. In the statement of the theorem, we assume, without loss of generality, that $\frac{sp}{p-1}\geq \frac{tq}{q-1}$. If this doesn't hold one simply has to exchange the roles of $(s,p)$ and $(t,q)$ in order to obtain a proper statement.
\begin{Theorem}\label{teo:1higher}
Let $\Omega\subset\mathbb{R}^N$ be an open and bounded set and $s,t\in(0,1),\,A>0$. Assume $1<q\leq 2\leq p$ or $1<p\leq 2\leq q$ or $p,q\in(1,2]$ and that $\frac{sp}{p-1}\geq \frac{tq}{q-1}$. Suppose $u\in W^{s,p}_{\mathrm{loc}}(\Omega)\cap L_{sp}^{p-1}(\mathbb{R}^N)\cap  W^{t,q}_{\mathrm{loc}}(\Omega)\cap L_{tq}^{q-1}(\mathbb{R}^N)$ is a local weak solution of \eqref{eq:maineq} in $\Omega$ as in Definition \ref{subsupsolution}. 
Then $u\in C^{\Gamma-\e}_{\rm loc}(\Omega)$ for every $0<\e<\Gamma$, where $$\Gamma = \min\left(1,\frac{sp}{p-1}\right).$$ 
\par
In particular, for every $0<\e<\Gamma$ and $0<r\leq 1$ such that $B_{2r}(x_0)\Subset\Omega$, there exist constants $C=C(N,p,q,s,t,\e)>0$, $\sigma=\sigma(s,p,t,q,\e)>0$ and $\beta=\beta(N,s,p,t,q,\e)\geq 1$ such that
\begin{equation}\label{regestf}
[u]_{C^{\Gamma-\e}(B_{\sigma {r}}(x_0))}\leq \frac{C(1+r^{sp-tq}\mathcal{M}_r^{q-p})^{\beta}}{r^{\Gamma-\e}}\mathcal{M}_r,
\end{equation}
where
\begin{equation}\label{MR}
\mathcal{M}_r=\|u\|_{L^\infty(B_r(x_0))}+\mathrm{Tail}_{p-1,sp}(u;x_0,r)+\mathrm{Tail}_{q-1,tq}(u;x_0,r)+1.
\end{equation} 
\end{Theorem}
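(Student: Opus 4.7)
The plan is to follow the Moser iteration on finite differences that was developed for the pure fractional $p$-Laplace equation, and then show that under the assumption $sp/(p-1)\geq tq/(q-1)$ the nonlocal $q$-operator can be absorbed as a lower-order perturbation. Write $\tau_h u(x) = u(x+h)-u(x)$ for small $h\in\mathbb{R}^N$ and let $J_r(a)=|a|^{r-2}a$. Testing \eqref{eq:maineq} translated by $h$ and untranslated with the same test function and subtracting yields, for any admissible $\varphi$,
\begin{equation*}
\iint \frac{J_p(\tau_h u(x)-\tau_h u(y))}{|x-y|^{N+sp}}\bigl(\varphi(x)-\varphi(y)\bigr)\,dx\,dy + \iint \frac{J_q(\tau_h u(x)-\tau_h u(y))}{|x-y|^{N+tq}}\bigl(\varphi(x)-\varphi(y)\bigr)\,dx\,dy = 0.
\end{equation*}
I choose $\varphi = \eta^{\kappa}\,J_{1+\gamma}(\tau_h u)$ with a cutoff $\eta$ and a parameter $\gamma\geq p-1$ that will be iterated.

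From the $(-\Delta_p)^s$ part, the standard algebraic monotonicity of $J_p$ (using either $p\geq 2$ pointwise or the weighted form for $p\leq 2$) produces, after elementary manipulations and a localisation, a Caccioppoli-type inequality of the form
\begin{equation*}
\iint \eta^{\kappa}\,\frac{\bigl|\tau_h G_\gamma(u)(x) - \tau_h G_\gamma(u)(y)\bigr|^p}{|x-y|^{N+sp}}\,dx\,dy \;\lesssim\; \text{(cutoff terms)} + \text{(Tail contributions)} + |R_q|,
\end{equation*}
where $G_\gamma$ is the appropriate power function adapted to the value of $\gamma$ (which accounts for the split into the three cases $1<q\leq 2\leq p$, $1<p\leq 2\leq q$, $p,q\leq 2$), and $R_q$ is the commutator generated by the $q$-operator. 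The iteration of this inequality, via a fractional Sobolev/Morrey embedding on the scale of finite differences, bootstraps bounds on $\tau_h u$ up to $|h|^{sp/(p-1)-\e}$, giving the desired exponent $\Gamma$.

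The main obstacle is controlling $R_q$. Since $tq/(q-1)\leq sp/(p-1)$, the $q$-operator has a strictly smaller natural Hölder scale and must be treated as a perturbation; this is exactly where the factor $r^{sp-tq}\mathcal{M}_r^{q-p}$ in \eqref{regestf} comes from. I would split $R_q$ into an integral on $B_{2r}$, treated by Hölder's inequality in terms of the same finite-difference quantity that appears on the left-hand side (so that it can be reabsorbed for small $h$), plus a tail piece on $\mathbb{R}^N\setminus B_{2r}$, which is bounded by $\mathrm{Tail}_{q-1,tq}(u;x_0,r)$ and the $L^\infty$-norm of $u$. For the local part, one uses the crude bound $|\tau_h u|\leq 2\|u\|_{L^\infty(B_r)}$ together with the inequality $|J_q(a)-J_q(b)|\lesssim (|a|+|b|)^{q-2}|a-b|$ (adjusted for $q\leq 2$) to convert the $q$-kernel into the $p$-kernel at the expense of a power of $r$ and of $\mathcal{M}_r$. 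This is the step that introduces the scaling factor and the exponent $\beta$ in the final estimate.

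Finally, I would run the iteration: at each step $i$ the finite-difference norm on scale $s_{i+1} p/(p-1)$ (growing towards $\Gamma$) is controlled by the one at scale $s_i p/(p-1)$, picking up constants that remain summable because $\gamma$ is taken in a geometric progression. Stopping the iteration when $s_i p/(p-1)\geq \Gamma-\e$ and translating the resulting finite-difference bound into a Hölder estimate via a standard Campanato/Morrey-type characterisation on nested balls yields the pointwise regularity. The radius $\sigma r$ arises from repeated shrinkings of the cutoff across the finitely many iteration steps. Collecting the constants gives precisely \eqref{regestf}, with $\beta$ depending on the number of iteration steps needed to reach $\Gamma-\e$.
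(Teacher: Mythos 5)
The high-level skeleton of your plan (test the equation with $\varphi=\eta^\kappa J_{1+\gamma}(\tau_h u)$, derive a Caccioppoli-type inequality for finite differences, iterate, translate the iterate into a H\"older estimate) does match the architecture of the proof in the paper. However, two steps of your outline contain genuine gaps. First, you propose a single iteration that bootstraps $\tau_h u$ directly to the scale $|h|^{sp/(p-1)-\e}$. This is not achievable: the naive Caccioppoli estimate obtained from $J_p$-monotonicity can only push the differentiability exponent up to $s$ (or $t$). To go beyond, the proof must be broken into a two-stage scheme: first establish an intermediate a priori H\"older estimate ($C^{s-\e}$ in the case $q\leq 2\leq p$; $C^{t-\e}$ in the case $p\leq 2\leq q$; progressively improving $C^{\gamma_i}$ in the subquadratic case), and then feed this a priori estimate back into a refined estimate of the cross terms (the analogue of $\mathcal{I}_{11}$ in the paper). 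It is precisely the assumption $[u]_{C^\gamma(B_1)}\leq 1$ in Propositions \ref{prop:improve2I}, \ref{prop:improve3} and \ref{prop:improveIII} that makes the problematic cross terms summable at the next differentiability scale; your proposal never introduces this mechanism, so the bootstrap would stall at the first-generation exponent.

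Second, your account of where the factor $r^{sp-tq}\mathcal{M}_r^{q-p}$ comes from is incorrect. You attribute it to a pointwise conversion of the $q$-kernel into the $p$-kernel by the inequality $|J_q(a)-J_q(b)|\lesssim(|a|+|b|)^{q-2}|a-b|$ together with $|\tau_h u|\leq 2\|u\|_{L^\infty}$. But such a comparison of $|x-y|^{-N-tq}$ against $|x-y|^{-N-sp}$ is not uniform in $|x-y|$, and $sp\geq tq$ does not even follow from $\frac{sp}{p-1}\geq\frac{tq}{q-1}$ in general. The factor actually arises from the normalization: setting $u_r(x)=u(rx)/\mathcal{M}_r$ produces a solution of $(-\Delta_p)^s u_r + A r^{sp-tq}\mathcal{M}_r^{q-p}(-\Delta_q)^t u_r =0$ in $B_2$ with unit bounds, and the rescaled coefficient is simply carried through the iteration. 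Moreover, the $q$-operator is not discarded as a lower-order perturbation in general: when $q\geq 2$ it contributes a \emph{nonnegative} $W^{t,q/2}$- or $W^{t,q}$-type Gagliardo seminorm on the left side of the Caccioppoli estimate which, in the case $1<p\leq 2\leq q$, is the coercive term one actually keeps. A last, smaller, issue: the passage from the finite-difference estimates to pointwise H\"older continuity is carried out through the Besov-type embeddings of Lemma \ref{emb1} and Theorem \ref{emb2} applied to \emph{second-order} difference quotients $\delta^2_h u$, not through the Campanato/Morrey characterization on nested balls you invoke.
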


\begin{Remark}\label{mthmrmk1}
When at least one of the following conditions along with $\frac{sp}{p-1}\geq \frac{tq}{q-1}$ holds
\begin{enumerate}
\item $1<q\leq 2\leq p$,
\item $1<p\leq 2\leq q$ with $sp\geq tq$,
\item $p,q\in(1,2]$,
\end{enumerate}
then Theorem \ref{teo:1higher} applies for any $r>0$. This can be seen by inspecting the proof of the Theorem \ref{teo:1higher} in each of the separate cases.
\end{Remark}

\begin{Remark}\label{mthmrmk2}
In Theorem \ref{teo:1higher}, if we instead consider a local weak solution $u$ of 
$$
(-\Delta_p)^s u +A (-\Delta_q)^t u=0\text{ in }\Om,
$$ with $A>0$, then the estimate \eqref{regestf} will be replaced with
\begin{equation*}
[u]_{C^{\Gamma-\e}(B_{\sigma {r}}(x_0))}\leq \frac{C(1+(Ar^{sp-tq}\mathcal{M}_r^{q-p})^{\beta}+k A^{-\beta})}{r^{\Gamma-\e}}\mathcal{M}_r,
\end{equation*}
where $\mathcal{M}_r$ is as defined in \eqref{MR}. Here $k=1$ if $1<p\leq 2\leq q$ and $sp<tq$, otherwise, $k=0$. Moreover, in this case, Remark \eqref{mthmrmk1} will also hold.
\end{Remark} 
 
As a corollary of the Theorem \ref{teo:1higher} combined with Remark \ref{mthmrmk1}, we obtain the following Liouville-type result.
\begin{Corollary}\label{cor:liouville}
Let $u\in W^{s,p}(\R^N)\cap L_{sp}^{p-1}(\mathbb{R}^N)\cap  W^{t,q}(\R^N)\cap L_{tq}^{q-1}(\mathbb{R}^N)$ be a globally bounded local weak solution of \eqref{eq:maineq} in $\R^N$. Assume in addition $\beta(sp-tq)<\Gamma$ and that one of the alternatives along with $\frac{sp}{p-1}\geq \frac{tq}{q-1}$ holds:
\begin{enumerate}
\item  $1<q\leq 2\leq p$, 
\item $1<p\leq 2\leq q$ and $sp\geq tq$,
\item $p,q\in(1,2]$.
\end{enumerate}
Here $\beta$ is the $\beta$ in Theorem \ref{teo:1higher}. Then $u$ is constant in $\mathbb{R}^N$.
\end{Corollary}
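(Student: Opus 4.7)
The plan is to apply the interior H\"older estimate \eqref{regestf} on balls $B_{\sigma r}(x_0)$ with $r\to\infty$ at a fixed point $x_0\in\R^N$, and then use the global boundedness of $u$ to force the resulting seminorm to zero. Pick any $\e\in(0,\Gamma)$.

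The preparatory step is a uniform bound on $\mathcal{M}_r$. The local sup-norm is trivially controlled by $\|u\|_{L^\infty(\R^N)}$, while for the nonlocal tails, bounding $|u|$ pointwise by $\|u\|_{L^\infty(\R^N)}$ inside the defining integral gives
\[
\Tail_{p-1,sp}(u;x_0,r)^{p-1} \le \|u\|_{L^\infty(\R^N)}^{p-1}\, r^{sp}\int_{\R^N\setminus B_r(x_0)}\frac{dy}{|y-x_0|^{N+sp}} = C(N,s,p)\|u\|_{L^\infty(\R^N)}^{p-1},
\]
so the factor $r^{sp}$ cancels; the $(q-1,tq)$-tail is treated identically. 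Therefore $\mathcal{M}_r\le \mathcal{M}_\infty$ for some constant $\mathcal{M}_\infty$ independent of $r$. Under any of the three alternatives in the statement, Remark \ref{mthmrmk1} makes Theorem \ref{teo:1higher} applicable for every $r>0$, so inserting the uniform bound into \eqref{regestf} yields
\[
[u]_{C^{\Gamma-\e}(B_{\sigma r}(x_0))} \le \frac{C\bigl(1+r^{sp-tq}\mathcal{M}_\infty^{q-p}\bigr)^{\beta}}{r^{\Gamma-\e}}\,\mathcal{M}_\infty.
\]

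For large $r$, the right-hand side behaves like $r^{\max\{0,\,\beta(sp-tq)\}-(\Gamma-\e)}$. If $sp\le tq$, the numerator remains bounded and the denominator yields decay. If $sp>tq$, the standing hypothesis $\beta(sp-tq)<\Gamma$ permits choosing $\e>0$ small enough that $\beta(sp-tq)<\Gamma-\e$, and we again obtain decay. In either case $[u]_{C^{\Gamma-\e}(B_{\sigma r}(x_0))}\to 0$ as $r\to\infty$, so $u$ is constant on every ball around $x_0$; since $x_0$ was arbitrary, $u$ is constant on $\R^N$. The principal bookkeeping issue is the dependence of $\beta$ on $\e$: to close the case $sp>tq$ one must verify that the explicit form of $\beta$ produced in the proof of Theorem \ref{teo:1higher} admits some admissible $\e$ for which $\beta(\e)(sp-tq)<\Gamma-\e$, but once this is ensured the remainder of the argument is a pure scaling computation.
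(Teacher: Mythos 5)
Your proof is correct and follows the same route as the paper: uniformly bound $\mathcal{M}_r$ using the global $L^\infty$ bound (the tails collapse since $r^{sp}\int_{\mathbb{R}^N\setminus B_r}|y-x_0|^{-N-sp}\,dy$ is a dimensional constant), invoke Remark~\ref{mthmrmk1} to apply \eqref{regestf} for all $r>0$, and let $r\to\infty$. The $\e$--dependence of $\beta$ that you flag at the end is a genuine subtlety that the paper's proof elides by writing ``for some $\e>0$''; the hypothesis $\beta(sp-tq)<\Gamma$ should be read as asserting the existence of an admissible $\e\in(0,\Gamma)$ for which $\beta(\e)(sp-tq)<\Gamma-\e$, which is exactly what both arguments need to force the seminorm to zero.
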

\begin{proof}
Since $u$ is globally bounded, $\mathcal{M}_r$ is uniformly bounded. Taking Remark \ref{mthmrmk1} into account, we may apply the estimate \eqref{regestf} from Theorem \ref{teo:1higher} for any $r>0$. Letting $r\to \infty$ yields 
$$
[u]_{C^{\Gamma-\e}(B_{R})}=0
$$
for any $R>0$ and some $\e>0$. We conclude that $u$ is constant in $\mathbb{R}^N$.
\end{proof}
\subsection{Comments on the main result}
The results presented in this paper are not to be regarded as optimal in any sense. In fact, it is commonly anticipated that solutions to \eqref{eq:maineq} should be continuously differentiable. However, this remains an open question, even in the simpler setting involving a single fractional $p$-Laplace operator in the equation. On the other hand, for the inhomogeneous version of \eqref{eq:maineq} with a bounded inhomogeneity, the optimal regularity one might hope for is $\frac{sp}{p-1}$; see, for instance, \cite{BLS} for further details. We also wish to remark that the regularity theory for the fractional $p$-Laplace equation has undergone significant advancements in recent years. Without doubt, many of the techniques developed to refine the results in \cite{BLS} and \cite{GL} are likely to influence the study of equations like \eqref{eq:maineq} as well. See, for example, \cite{biswas2025lipschitz, BDLMBS3, BDLMBS1, BDLMBS2}.

\subsection{Known results}
In recent years, there has been a surge of interest around the fractional $p$-Laplace equation
\begin{equation}\label{fracp}
(-\Delta_p)^s u=0.
\end{equation}
The available literature is vast and we only mention a fraction of the available references. For $p=2$,  we refer to \cite{Kasscvpde} for the  local H\"older continuity for weak solutions and to \cite{Silvestre} for the viscosity setting.

In general case when $1<p<\infty$, local H\"older regularity and Harnack inequalities for weak solutions are proved in \cite{Cozzi,Kuusi1,Kuusi2}. See also \cite{Erik16}, for the viscosity setting. In \cite{Antoni4,Antoni1, Antoni2,Antoni3},  H\"older continuity up to the boundary and fine boundary regularity has been investigated.

Explicit H\"older exponents have been obtained  in \cite{BLS} and \cite{GL}. For improvements of these results, see \cite{BDLMBS1, BDLMBS3, BDLMBS2}. Another recent improvement in this direction was made in \cite{biswas2025lipschitz}, where the authors were able to reach Lipschitz regularity in the case $sp\geq (p-1)$ and $\frac{sp}{p-1}$-regularity in the case $sp<(p-1)$. Even more recently, in \cite{GJS25}, the $C^{1,\alpha}$-regularity was proved when $p\in [2,2/(1-s))$.

We also wish to mention \cite{DKLN} and \cite{DN} where sharp regularity results for equation \eqref{fracp} with a right hand side belonging to a Lorentz space are studied, both in terms of H\"older regularity and higher differentiability.

 The state of the art in terms of regularity results for equation like \eqref{eq:maineq} is not very extensive.  In \cite{GKS3}, local H\"older regularity is studied both for stationary and evolutionary problems. When $2\leq q\leq p<\infty$ and $0<t\leq s<1$, higher H\"older regularity is obtained in \cite{GKS}. In \cite{biswas2025lipschitz}, this result is improved without any specific assumptions on the parameters. In particular, when the optimal regularity exponent $\Gamma\neq 1$, their results is stronger than the one in the present paper. See also \cite{BS25}, where these results are improved in several directions. Regularity up to the boundary has been obtained in \cite{DJKTarxiv, GKScrmath}.

Equations of the type \eqref{eq:maineq} with a so-called modulating coefficient $a(x,y)$ in front of the second operator are often referred to as nonlocal double-phase equations. One of the earliest results concerning such equations is found in \cite{DFP}, where H\"older continuity is established for viscosity solutions, allowing for a very general $a$.  In the context of weak solutions, local boundedness and H\"older continuity were obtained in \cite{BOS}. See also \cite{FZ}, for H\"older regularity and the connection between weak and viscosity solutions. Higher H\"older regularity was studied in \cite{BKK}. Noteworthy is that in \cite{BKK}, higher differentiability is obtained as well. Moreover, a more modulating coefficient $a$ is allowed to appear in both of the operators. Nonlocal self-improving properties have been studied in \cite{SM}. Regarding global regularity, that is, regularity up to the boundary, we refer the reader to \cite{GKS2, GKS}.

There has also been a development of the regularity theory for nonlocal equations with general or nonstandard growth that includes \eqref{eq:maineq} as a special case. See \cite{BKO23} for the local H\"older regularity, \cite{FSV22} and \cite{FSV23} for both local and global regularity. A Harnack inequality is studied in \cite{CKW23,FZ23}. In \cite{KLL25}, a Wiener-type criterion in this setting is established.

\subsection{Plan of the paper}

In Section \ref{sec:prel}, notation, definitions and function spaces are introduced. The proof of the main theorem is contained in Sections \ref{sec:I}-\ref{sec:III}. The case $1<q\leq 2\leq p$ is treated in Section \ref{sec:I}, the case $1<p\leq 2\leq q$ is covered in Section \ref{sec:II}, and finally in Section \ref{sec:III}, we treat the case $p,q\in(1,2]$. In all three cases, different ingredients from the two papers \cite{BLS, GL} are combined.

\section{Preliminaries}\label{sec:prel}
In this section, we present the relevant notation together with some auxiliary results  required in the paper. We introduce the following notation that will be used throughout the paper.
\subsection{Notation}
\begin{itemize}
\item Throughout the paper, we work in the Euclidean space $\R^N$, with $N\geq 1$.
\item $B_r(x_0)$ denotes the ball of radius $r$  centered  at $x_0$. When $x_0=0$, we write $B_r$ in place of $B_r(0)$.

\item If a constant $C$ depends on the parameters $r_1,\ldots,r_k$, we write $C=C(r_1,\ldots,r_k)$.

\item For $1<q<\infty$, we define the monotone function $J_q:\mathbb{R}\to\mathbb{R}$ by
\begin{equation}\label{jp}
J_q(t)=|t|^{q-2}t.
\end{equation}
\item We use the notation
\begin{equation}\label{dmu}
d\mu_{l,m}=\frac{dx dy}{|x-y|^{N+lm}}\text{ for }(l,m)\in\{(p,s),(q,t)\},
\end{equation}
for the relevant kernels used in this paper.
\item For  $0< \delta\leq 1$, we denote $\delta$-H\"older seminorm by 
$$
[u]_{C^\delta(\Omega)}=\sup_{x\neq y\in \Omega}\frac{|u(x)-u(y)|}{|x-y|^{\delta}}.
$$

\item $\Gamma=\min\big(\frac{sp}{p-1},1\big)$ for $1<p<\infty$ and $0<s<1$.

\item For a function $\psi:\mathbb{R}^N\to\mathbb{R}$ and a vector $h\in\mathbb{R}^N$, we define
\[
\psi_h(x)=\psi(x+h),\qquad \delta_h \psi(x)=\psi_h(x)-\psi(x),\qquad \delta^2_h \psi(x)=\delta_h(\delta_h \psi(x))=\psi_{2\,h}(x)+\psi(x)-2\,\psi_h(x)
\]
and observe that the following product rule holds
\begin{equation}\label{Lrule1}
\delta_h(\phi\psi)=\psi_h\delta_h\phi+\phi\delta_h\psi.
\end{equation}
\end{itemize}
\subsection{Function spaces}
We introduce two Besov-type spaces that will be necessary for our arguments. To this end, let $1\le q<\infty$ and suppose $\psi\in L^q(\mathbb{R}^N)$. Then for $0<\beta\le 1$, we define
\[
[\psi]_{\mathcal{N}^{\beta,q}_\infty(\mathbb{R}^N)}:=\sup_{|h|>0} \left\|\frac{\delta_h \psi}{|h|^{\beta}}\right\|_{L^q(\mathbb{R}^N)},
\]
and 
for $0<\beta<2$, we define
\[
[\psi]_{\mathcal{B}^{\beta,q}_\infty(\mathbb{R}^N)}:=\sup_{|h|>0} \left\|\frac{\delta_h^2 \psi}{|h|^{\beta}}\right\|_{L^q(\mathbb{R}^N)}.
\]
We then introduce the Besov-type spaces
\[
\mathcal{N}^{\beta,q}_\infty(\mathbb{R}^N)=\left\{\psi\in L^q(\mathbb{R}^N)\, :\, [\psi]_{\mathcal{N}^{\beta,q}_\infty(\mathbb{R}^N)}<+\infty\right\},\qquad 0<\beta\le 1,
\]
and
\[
\mathcal{B}^{\beta,q}_\infty(\mathbb{R}^N)=\left\{\psi\in L^q(\mathbb{R}^N)\, :\, [\psi]_{\mathcal{B}^{\beta,q}_\infty(\mathbb{R}^N)}<+\infty\right\},\qquad 0<\beta<2.
\]
We also require the {\it Sobolev-Slobodecki\u{\i} space}
\[
W^{\beta,q}(\mathbb{R}^N)=\left\{\psi\in L^q(\mathbb{R}^N)\, :\, [\psi]_{W^{\beta,q}(\mathbb{R}^N)}<+\infty\right\},\qquad 0<\beta<1<q<\infty,
\]
where the seminorm $[\,\cdot\,]_{W^{\beta,q}(\mathbb{R}^N)}$ is defined by
\[
[\psi]_{W^{\beta,q}(\mathbb{R}^N)}=\left(\iint_{\mathbb{R}^N\times \mathbb{R}^N} \frac{|\psi(x)-\psi(y)|^q}{|x-y|^{N+\beta\,q}}\,dx\,dy\right)^\frac{1}{q}.
\]
We endow these spaces  with  the norms  
\[
\|\psi\|_{\mathcal{N}^{\beta,q}_\infty(\mathbb{R}^N)}=\|\psi\|_{L^q(\mathbb{R}^N)}+[\psi]_{\mathcal{N}^{\beta,q}_\infty(\mathbb{R}^N)},
\] 
\[
\|\psi\|_{\mathcal{B}^{\beta,q}_\infty(\mathbb{R}^N)}=\|\psi\|_{L^q(\mathbb{R}^N)}+[\psi]_{\mathcal{B}^{\beta,q}_\infty(\mathbb{R}^N)},
\]
and
\[
\|\psi\|_{W^{\beta,q}(\mathbb{R}^N)}=\|\psi\|_{L^q(\mathbb{R}^N)}+[\psi]_{W^{\beta,q}(\mathbb{R}^N)}.
\]
We will define the space $W^{\beta,q}(\Omega)$ for an open subset $\Omega\subset \mathbb{R}^N$,
\[
W^{\beta,q}(\Omega)=\left\{\psi\in L^q(\Omega)\, :\, [\psi]_{W^{\beta,q}(\Omega)}<+\infty\right\},\qquad 0<\beta<1,
\]
where we define
\[
 [\psi]_{W^{\beta,q}(\Omega)}=\left(\iint_{\Omega\times \Omega} \frac{|\psi(x)-\psi(y)|^q}{|x-y|^{N+\beta\,q}}\,dx\,dy\right)^\frac{1}{q}.
\]
\subsection{Embedding inequalities} The following result is \cite[Lemma 2.3]{Brolin}.
\begin{Lemma}\label{emb1}
Let $0<\beta<1$ and $1\leq q<\infty$. Then the embedding
$$
\mathcal{B}_\infty^{\beta,q}(\mathbb{R}^N)\hookrightarrow \mathcal{N}_\infty^{\beta,q}(\mathbb{R}^N) 
$$
is continuous.
More precisely, for every $\psi\in \mathcal{B}_\infty^{\beta,q}(\mathbb{R}^N)$, we have
$$
[\psi]_{\mathcal{N}_\infty^{\beta,q}(\mathbb{R}^N)}\leq\frac{C}{1-\beta}\Big([\psi]_{\mathcal{B}_\infty^{\beta,q}(\mathbb{R}^N)}+\|\psi\|_{L^q(\mathbb{R}^N)}\Big),
$$
for some constant $C=C(N,q)>0$. Moreover, for every $h_0>0$, we have
$$
\sup_{0<h<h_0}\left\|\frac{\delta_h \psi}{|h|^{\beta}}\right\|_{L^q(\mathbb{R}^N)}\leq \frac{C}{1-\beta}\Big\{ \sup_{0<h<h_0}\left\|\frac{\delta^{2}_h \psi}{|h|^{\beta}}\right\|_{L^q(\mathbb{R}^N)} +\big(h_0^{-\beta}+1\big)\|\psi\|_{L^q(\mathbb{R}^N)} \Big\}.
$$
\end{Lemma}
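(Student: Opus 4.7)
The plan is to derive a telescoping identity expressing the first-order difference $\delta_h\psi$ in terms of a remainder $\delta_{2^k h}\psi$ and a dyadic sum of second-order differences, and then take $L^q$ norms. The starting point is the elementary algebraic identity
$$\delta_{2h}\psi(x)=\bigl(\psi(x+2h)+\psi(x)-2\psi(x+h)\bigr)+2\bigl(\psi(x+h)-\psi(x)\bigr)=\delta_h^2\psi(x)+2\delta_h\psi(x),$$
which rearranges to $\delta_h\psi=\tfrac12\delta_{2h}\psi-\tfrac12\delta_h^2\psi$. Iterating on the first term $k$ times yields
$$\delta_h\psi=\frac{1}{2^k}\delta_{2^k h}\psi-\sum_{j=0}^{k-1}\frac{1}{2^{j+1}}\delta_{2^j h}^2\psi.$$

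I would then take $L^q(\mathbb{R}^N)$ norms and divide by $|h|^\beta$. By translation invariance and the triangle inequality, $\|\delta_{2^k h}\psi\|_{L^q}\le 2\|\psi\|_{L^q}$, so the remainder contributes $2^{1-k}|h|^{-\beta}\|\psi\|_{L^q}$. For each term in the sum, rescaling gives
$$\frac{1}{2^{j+1}|h|^\beta}\|\delta_{2^jh}^2\psi\|_{L^q}=\frac{2^{j(\beta-1)}}{2}\cdot\frac{\|\delta_{2^jh}^2\psi\|_{L^q}}{|2^jh|^\beta}\le \frac{2^{j(\beta-1)}}{2}[\psi]_{\mathcal{B}_\infty^{\beta,q}(\mathbb{R}^N)}.$$
Summing the geometric series yields $\tfrac12(1-2^{\beta-1})^{-1}[\psi]_{\mathcal{B}_\infty^{\beta,q}(\mathbb{R}^N)}$, and one verifies $(1-2^{\beta-1})^{-1}\le C/(1-\beta)$ on $(0,1)$ by writing $2^{\beta-1}=e^{-(1-\beta)\ln 2}$ and using the elementary inequality $1-e^{-y}\ge y(1-y/2)$ for $y\in[0,\ln 2]$.

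For the global inequality, I would let $k\to\infty$, which kills the remainder for each fixed $h$, and recover $[\psi]_{\mathcal{N}_\infty^{\beta,q}}\le C(1-\beta)^{-1}[\psi]_{\mathcal{B}_\infty^{\beta,q}}$; this is actually stronger than the stated bound, since the $\|\psi\|_{L^q}$ term on the right-hand side turns out to be redundant for the first conclusion. For the localized inequality involving $h_0$, one cannot take $k$ arbitrarily large, because each intermediate step size $2^j|h|$ with $j<k$ must remain below $h_0$ in order for the second-difference norms to be controlled by the restricted supremum. I would therefore pick $k=k(h)$ to be the unique integer with $2^{k-1}|h|\le h_0<2^k|h|$ (setting $k=0$ when $|h|\ge h_0$). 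The remainder is then bounded by $2^{-k}|h|^{-\beta}\|\psi\|_{L^q}\le h_0^{-1}|h|^{1-\beta}\|\psi\|_{L^q}\le h_0^{-\beta}\|\psi\|_{L^q}$ whenever $|h|\le h_0$, and the $h_0^{-\beta}+1$ prefactor in the statement absorbs both the $h_0\le 1$ and the $h_0\ge 1$ regimes.

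The main obstacle is the quantitative bookkeeping required to trace the $(1-\beta)^{-1}$ dependence through the geometric sum as $\beta\to 1^-$, rather than an unspecified constant that would degenerate. Everything else is a combination of the triangle inequality, homogeneity in $|h|$, and the careful choice of the number of iterations $k$ as a function of $h$ and $h_0$.
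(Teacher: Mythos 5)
The paper does not prove this lemma; it cites it verbatim from an external reference. So there is no in-paper proof to compare against. Your telescoping argument is the standard proof of this Marchaud-type inequality and is, as far as I can check, correct and essentially the argument used in the cited source. The identity $\delta_h\psi=\tfrac12\delta_{2h}\psi-\tfrac12\delta_h^2\psi$ iterates correctly by induction, the homogeneity bookkeeping $\frac{1}{2^{j+1}|h|^\beta}\|\delta^2_{2^jh}\psi\|_{L^q}=\frac{2^{j(\beta-1)}}{2}\cdot\frac{\|\delta^2_{2^jh}\psi\|_{L^q}}{|2^jh|^\beta}$ is right, the geometric series gives $\tfrac12(1-2^{\beta-1})^{-1}$, and your elementary estimate $1-e^{-y}\ge y(1-y/2)$ for $y\in[0,\ln 2]$ (which holds on all of $[0,\infty)$ by checking $f(y)=1-e^{-y}-y+y^2/2$ has $f(0)=f'(0)=0$, $f''\ge0$) correctly extracts the $(1-\beta)^{-1}$ dependence. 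Your observation that the $\|\psi\|_{L^q}$ term on the right of the first inequality is redundant (since for the unrestricted supremum one may let $k\to\infty$ before taking the supremum in $h$) is a genuine, if minor, sharpening of the stated bound. For the local version, your choice $2^{k-1}|h|\le h_0<2^k|h|$ allows equality $2^{k-1}|h|=h_0$, which strictly speaking falls outside the range $0<h'<h_0$ of the restricted supremum; this is a cosmetic edge case resolved either by defining $k$ so that $2^{k-1}|h|<h_0\le 2^k|h|$ instead, or by noting that $h'\mapsto\|\delta^2_{h'}\psi\|_{L^q}$ is continuous, so the supremum over $(0,h_0)$ equals that over $(0,h_0]$. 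The remainder estimate $2^{1-k}|h|^{-\beta}\le 2h_0^{-1}|h|^{1-\beta}\le 2h_0^{-\beta}$ for $|h|<h_0$ is correct and fits the stated $(h_0^{-\beta}+1)$ prefactor without any case analysis on the size of $h_0$. The parenthetical about setting $k=0$ when $|h|\ge h_0$ is vacuous here since the supremum ranges only over $h<h_0$, but harmless.
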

The following embedding result is \cite[Theorem 2.8]{BLS}.
\begin{Theorem}\label{emb2}
Let $0<\beta<1$ and $1\leq q<\infty$ be such that $\beta q>N$. If $\psi\in \mathcal{N}_\infty^{\beta,q}(\mathbb{R}^N)$, then $\psi\in C^\alpha_{\mathrm{loc}}(\mathbb{R}^N)$ for every $0<\alpha<\beta-\frac{N}{q}$. More precisely, we have
$$
\sup_{x\neq y}\frac{|\psi(x)-\psi(y)|}{|x-y|^\alpha}\leq C\Big([\psi]_{\mathcal{N}_\infty^{\beta,q}(\mathbb{R}^N)}\Big)^\frac{\alpha q+N}{\beta q}\Big(\|\psi\|_{L^q(\mathbb{R}^N)}\Big)^\frac{(\beta-\alpha)q-N}{\beta q},
$$
with $C=C(N,q,\alpha,\beta)>0$ which blows up as $\alpha\nearrow \beta-\frac{N}{q}$.
\end{Theorem}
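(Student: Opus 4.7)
The plan is to prove the embedding through a dyadic averaging argument that combines an $L^\infty$ interpolation inequality with an endpoint H\"older estimate. Throughout, I would work with the spherical averages $A_\rho(x):=\frac{1}{|B_\rho|}\int_{B_\rho(x)} \psi(y)\,dy$ and use the Nikolskii seminorm to control their oscillation both as the center and as the scale vary.

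The first step is to establish two estimates on $A_\rho$. Writing $A_\rho(x) - A_r(x)$ as a double average over $B_\rho(x)\times B_r(x)$ of $\psi(y)-\psi(z)$, applying Jensen in $q$, performing the change of variables $h = z-y$, and invoking the Nikolskii bound $\|\delta_h\psi\|_{L^q(\mathbb{R}^N)} \leq |h|^\beta\,[\psi]_{\mathcal{N}^{\beta,q}_\infty}$ yields, for comparable radii $\rho\sim r$,
\[
|A_\rho(x) - A_r(x)| \leq C\,[\psi]_{\mathcal{N}^{\beta,q}_\infty}\,\rho^{\beta - N/q}.
\]
Similarly, the identity $A_\rho(x) - A_\rho(y) = -\frac{1}{|B_\rho|}\int_{B_\rho(0)}\delta_{y-x}\psi(x+w)\,dw$ followed by H\"older gives
\[
|A_\rho(x) - A_\rho(y)| \leq C\,\rho^{-N/q}\,|x-y|^\beta\,[\psi]_{\mathcal{N}^{\beta,q}_\infty}.
\]

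With these in hand, telescoping $\psi(x) = A_R(x) + \sum_{k\geq 0}(A_{R 2^{-k-1}}(x) - A_{R 2^{-k}}(x))$ at every Lebesgue point $x$ and summing the geometric series (which converges precisely because $\beta q > N$) produces $|\psi(x) - A_R(x)| \leq C\,[\psi]\,R^{\beta - N/q}$. Combining with the trivial bound $|A_R(x)| \leq C R^{-N/q}\|\psi\|_{L^q}$ and optimizing $R$ so the two contributions balance ($R^\beta \sim \|\psi\|_{L^q}/[\psi]$) gives the $L^\infty$ interpolation inequality
\[
\|\psi\|_{L^\infty(\mathbb{R}^N)} \leq C\,[\psi]_{\mathcal{N}^{\beta,q}_\infty}^{N/(\beta q)}\,\|\psi\|_{L^q(\mathbb{R}^N)}^{(\beta q - N)/(\beta q)}.
\]
In parallel, for $x\neq y$ with $d = |x-y|$, inserting $\rho = d$ into the decomposition $\psi(x)-\psi(y) = (\psi(x)-A_d(x)) + (A_d(x)-A_d(y)) + (A_d(y)-\psi(y))$ and applying the two average estimates yields the endpoint H\"older bound $|\psi(x)-\psi(y)| \leq C\,[\psi]\,d^{\beta - N/q}$.

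To reach the precise interpolation form in the statement, I would then take the pointwise minimum of this endpoint bound and the trivial bound $|\psi(x)-\psi(y)| \leq 2\|\psi\|_{L^\infty}$ and optimize over $d$ so as to maximize the resulting quotient by $d^\alpha$. Balancing the two contributions gives
\[
[\psi]_{C^\alpha} \leq C\,\|\psi\|_{L^\infty}^{1-\sigma}\,[\psi]_{\mathcal{N}^{\beta,q}_\infty}^{\sigma}\qquad \text{with}\qquad \sigma = \frac{\alpha q}{\beta q - N},
\]
and substituting the $L^\infty$ interpolation inequality above reproduces the exact exponents claimed after arithmetic on the exponents. The main technical hurdle is the first change-of-variables estimate: one has to extract the precise scaling $\rho^{\beta - N/q}$ from the Nikolskii seminorm with a constant independent of the center $x$, handling the domain of $h$ carefully after the swap of order of integration. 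Once that is in hand the rest is the standard telescoping-and-balancing scheme, and the blow-up of the constant as $\alpha \nearrow \beta - N/q$ emerges naturally from the factor $(1-2^{-(\beta-N/q)})^{-1}$ in the telescoping sum together with the $1-\sigma$ exponent in the final optimization.
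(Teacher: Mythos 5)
Your proof is correct, and the chain of computations checks out: the two estimates on the spherical averages $A_\rho$, the telescoping bound $|\psi(x)-A_R(x)|\leq C\,[\psi]\,R^{\beta-N/q}$ (valid since $\beta q>N$ makes the geometric series converge), the $L^\infty$ interpolation, the endpoint H\"older bound, and the final interpolation all go through, and I have verified that the exponents on $[\psi]_{\mathcal{N}^{\beta,q}_\infty}$ and $\|\psi\|_{L^q}$ recombine to $\frac{\alpha q+N}{\beta q}$ and $\frac{(\beta-\alpha)q-N}{\beta q}$ exactly as claimed. The paper itself gives no proof but defers to \cite[Theorem 2.8]{BLS}, where the argument is also an averaging/telescoping scheme in the same spirit as yours, so the route is essentially the same.

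There is one small inaccuracy that is worth correcting in your own writeup, although it does not affect the validity of the conclusion. You attribute the blow-up of the constant $C$ as $\alpha\nearrow\beta-\tfrac{N}{q}$ to the telescoping factor $\bigl(1-2^{-(\beta-N/q)}\bigr)^{-1}$ together with the exponent $1-\sigma$. Neither of these degenerates in that limit: the telescoping factor depends only on the fixed quantity $\beta-N/q$, and in the final optimization the constant is of the form $C(2/C)^{1-\sigma}$, which stays bounded as $\sigma\to 1$. In fact your argument establishes the nondegenerate endpoint estimate $[\psi]_{C^{\beta-N/q}}\leq C\,[\psi]_{\mathcal{N}^{\beta,q}_\infty}$ directly, so your constant does \emph{not} blow up; the blow-up mentioned in the statement is merely a feature of the constant one obtains by routing through a fractional Sobolev embedding $W^{\beta',q}$ with $\beta'<\beta$, and is not an inherent limitation. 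Your version of the estimate is therefore, if anything, slightly stronger than what is stated, and you should simply delete the final sentence about the origin of the blow-up (or note that your constant is uniform in $\alpha$ up to the endpoint).
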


\subsection{Tail spaces and weak solutions}
We introduce the {\it tail space} as
\[
L^{q}_{\alpha}(\mathbb{R}^N)=\left\{u\in L^{q}_{\rm loc}(\mathbb{R}^N)\, :\, \int_{\mathbb{R}^N} \frac{|u(x)|^q}{1+|x|^{N+\alpha}}\,dx<+\infty\right\},\qquad q>0 \mbox{ and } \alpha>0,
\]
and define for every $x_0\in\mathbb{R}^N$, $R>0,\,\beta>0$ and $u\in L^q_{\alpha}(\mathbb{R}^N)$ by
\begin{equation*}
\mathrm{Tail}_{q,\alpha}(u;x_0,R)=\left[R^\alpha\,\int_{\mathbb{R}^N\setminus B_R(x_0)} \frac{|u(x)|^q}{|x-x_0|^{N+\alpha}}\,dx\right]^\frac{1}{q}.
\end{equation*}
We observe that the quantity above is always finite, for a function $u\in L^q_{\alpha}(\mathbb{R}^N)$.
We now introduce the solution space. For $p,q\in (1,\infty)$, $s,t\in(0,1)$, and $\Omega$ a bounded and open subset of $\R^N$, we define the solution space $X(\Omega)$ as
    $$
    X(\Omega)=W^{s,p}_{\mathrm{loc}}(\Omega)\cap L_{sp}^{p-1}(\mathbb{R}^N)\cap  W^{t,q}_{\mathrm{loc}}(\Omega)\cap L_{tq}^{q-1}(\mathbb{R}^N).
        $$
\begin{Definition}\label{subsupsolution}(Local weak solution)
Let $p,q\in(1,\infty)$, $A>0$ and $s,t\in(0,1)$. Suppose $\Omega\subset\mathbb{R}^N$ is an open and bounded set. We say that $u\in X(\Omega)$ is a local weak solution of the equation 
\begin{equation}\label{eq:maineqnew}
(-\Delta_p)^s u+A(-\Delta_q)^t u=0\text{ in } \Omega,
\end{equation}
if 
\begin{equation}\label{wksol}
\begin{gathered}
\sum_{(l,m,k)\in\{(p,s,1),(q,t,A)\}}k\int_{\mathbb{R}^N}\int_{\mathbb{R}^N}J_l(u(x)-u(y))(\varphi(x)-\varphi(y))\,d\mu_{l,m}=0,
\end{gathered}
\end{equation}
for $\varphi\in W^{s,p}(\Omega)\cap W^{t,q}(\Omega)$  with compact support in $\Omega$.   Recall that  $J_l(t)=|t|^{l-2}t$ and $d\mu_{l,m}=\frac{dx dy}{|x-y|^{N+lm}}$,  as defined in \eqref{jp} and \eqref{dmu} respectively.
\end{Definition}

\section{Case I: $1<q\leq 2\leq p$}\label{sec:I}
Throughout this section, it will be assumed that $1<q\leq 2\leq p$ and $s,t\in(0,1)$ satisfy $\frac{sp}{p-1}\geq\frac{tq}{q-1}$ unless stated otherwise.
\subsection{Improved Besov regularity and almost $C^s$-regularity}
 The following improved Besov-type regularity serves as a starting point. 
\begin{prop}
\label{prop:improve}
Let $A>0$ and assume that $u\in X(B_2)$ is a local weak solution of \eqref{eq:maineqnew} in $B_2$  satisfying
\begin{equation}
\label{bounds}
\|u\|_{L^\infty(B_1)}\leq 1, \qquad 
\int_{\mathbb{R}^N\setminus B_1} \frac{|u(y)|^{l-1}}{|y|^{N+lm}}\,dy\leq 1,
\end{equation}
where $(l,m)\in\{(p,s),(q,t)\}$. Then for any $p\leq\mu <\infty$ and $0<h_0<\frac{1}{10}$ and $R$ such that $4\,h_0<R\le 1-5\,h_0$,  
we have
\begin{equation}\label{pro1}
\begin{split}
\sup_{0<|h|< h_0}\left\|\frac{\delta^2_h u}{|h|^{s}}\right\|_{L^{\mu+1}(B_{R-4\,h_0})}^{{\mu}+1}\leq C(A+1)\,\left(\sup_{0<|h|< h_0}\left\|\frac{\delta^2_h u }{|h|^s}\right\|_{L^{\mu}(B_{R+4h_0})}^{\mu}+1\right),
\end{split}
\end{equation}
for some positive constant $C=C(N,s,p,t,q,\mu,h_0)$.
\end{prop}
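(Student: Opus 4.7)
The plan is to adapt to \eqref{eq:maineqnew} the Moser iteration on discrete second differences developed by Brasco--Lindgren--Schikorra \cite{BLS} for the pure fractional $p$-Laplacian. Since $q\leq 2$, the extra operator $A\,(-\Delta_q)^t u$ should play the role of a lower-order perturbation: combining the $L^\infty$-bound \eqref{bounds} with the hypothesis $\tfrac{sp}{p-1}\geq\tfrac{tq}{q-1}$, its contribution can be controlled by an $L^\mu$-sized error term, which accounts for the factor $C(A+1)$ on the right of \eqref{pro1}.

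\textbf{Main steps.} First, I would discretely differentiate the equation: inserting $\varphi(\cdot-h)$ into \eqref{wksol} and performing the change of variables $(x,y)\mapsto(x+h,y+h)$, then subtracting from the original identity, yields
\[
\sum_{(l,m,k)\in\{(p,s,1),(q,t,A)\}}\!\!k\iint_{\mathbb{R}^{2N}}\bigl[J_l(u_h(x)-u_h(y))-J_l(u(x)-u(y))\bigr]\bigl(\varphi(x)-\varphi(y)\bigr)\,d\mu_{l,m}=0,
\]
for $\varphi$ compactly supported in $B_{R+2h_0}\Subset B_1$. Into this identity I would insert, after \cite{BLS}, the Moser-type test function
\[
\varphi=\eta^{p+1}\,J_{1+\beta}(\delta_h u),\qquad \beta:=\mu-p\geq 0,
\]
where $\eta$ is a smooth cutoff, $\eta\equiv 1$ on $B_{R-4h_0}$, $\supp\eta\subset B_{R+2h_0}$ and $|\nabla\eta|\leq C/h_0$; the $4h_0$ buffer accommodates both the support of $\eta$ and the shift by $h$ hidden inside $\delta_h u$. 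For the $p$-integral, the key algebraic input is a pointwise monotonicity estimate bounding $(J_p(A)-J_p(B))(J_{1+\beta}(A-C)-J_{1+\beta}(B-D))$ from below by a suitable positive power of $|\delta_h u(x)-\delta_h u(y)|$, applied with $A=u_h(x)-u_h(y)$, $B=u(x)-u(y)$, $C=u(x)$, $D=u(y)$, so that $(A-C)-(B-D)=\delta_h u(x)-\delta_h u(y)$. Cross terms involving $\nabla\eta$ are absorbed by Young's inequality, giving rise to the $L^\mu$-sized contribution on the right of \eqref{pro1}, and the Besov embedding Lemma \ref{emb1} finally converts the resulting first-difference bound into the desired second-difference quantity $\|\delta^2_h u/|h|^s\|_{L^{\mu+1}}^{\mu+1}$.

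\textbf{The $q$-part, and the main obstacle.} Since $q\leq 2$ makes the $q$-integrand non-coercive, I would treat it as an error rather than a useful main term: combining the H\"older-continuity inequality $|J_q(A)-J_q(B)|\leq C|A-B|^{q-1}$ with the bound $|\delta_h u|\leq 2$ coming from \eqref{bounds}, and using $\tfrac{tq}{q-1}\leq\tfrac{sp}{p-1}$ to dominate the kernel $|x-y|^{-N-tq}$ by $|x-y|^{-N-sp}$ on the near-diagonal region, the $q$-contribution can be bounded by $CA\bigl(\|\delta^2_h u/|h|^s\|_{L^\mu(B_{R+4h_0})}^\mu+1\bigr)$, with the $+1$ accounting for the tail integrals through \eqref{bounds}. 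The delicate point, and the main obstacle, is to balance the singularity of $J_q$ near zero (most severe when $q\leq 2$) against the gain from $\tfrac{sp}{p-1}\geq\tfrac{tq}{q-1}$, and to track powers of $|h|$ carefully so as to preserve the $|h|^{-s(\mu+1)}$ scaling built into the left-hand side of \eqref{pro1}. Once these estimates are collected, the conclusion \eqref{pro1} follows.
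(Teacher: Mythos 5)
Your overall architecture is the right one — discretely differentiate, insert a Moser test function of power type, estimate the $p$-part via BLS and the $q$-part separately, iterate — and in that sense you match the paper's skeleton. But there is a genuine gap in your treatment of the $q$-Laplace term, which is the entire novelty of the proposition.

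You propose to control the near-diagonal $q$-integral $\mathcal{I}_{1,q,t,A}$ by brute force: apply $|J_q(a)-J_q(b)|\leq C|a-b|^{q-1}$ together with $|\delta_h u|\leq 2$, and then dominate the kernel $|x-y|^{-N-tq}$ by $|x-y|^{-N-sp}$. This does not close. After the $J_q$ inequality you are left with an integrand of size at best $|\delta_h u(x)-\delta_h u(y)|^{q-1}\,|x-y|^{-N-tq}$ over $B_R\times B_R$; without an a priori Hölder estimate for $u$ (which is precisely what the proposition is en route to proving) there is no decay in $|\delta_h u(x)-\delta_h u(y)|$ as $x\to y$, so the singularity $|x-y|^{-N-tq}$ is non-integrable. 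Replacing $-N-tq$ by $-N-sp$ makes this worse, not better. The bound "$|\mathcal{I}_{1,q,t,A}|\leq CA(\|\delta_h^2 u/|h|^s\|_{L^\mu}^\mu+1)$" is therefore unjustified, and it is exactly the place where a new idea is needed.

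What the paper does instead is structurally different and essential: it does not bound $|\mathcal{I}_{1,q,t,A}|$ at all, only $-\mathcal{I}_{1,q,t,A}$ from above, by invoking the \emph{subquadratic} coercivity scheme of \cite{GL}. Because $1<q\leq 2$ and $\|u\|_{L^\infty(B_1)}\leq 1$, one can extract from $\mathcal{I}_{1,q,t,A}$ a \emph{positive} coercive term, a $W^{tq/2,2}$-type seminorm of $|\delta_h u|^{(\beta-1)/2}\delta_h u\,\eta^{p/2}/|h|^{(1+\theta\beta)/2}$, minus genuinely lower-order single integrals $\int_{B_R}|\delta_h u|^{q+\beta-1}/|h|^{1+\theta\beta}$, $\int_{B_R}|\delta_h u|^{\beta+1}/|h|^{1+\theta\beta}$, etc., which carry no singular kernel at all. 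The positive seminorm is simply dropped, and the error integrals are then trivially reduced to $\int_{B_R}|\delta_h u|^{\beta}/|h|^{1+\theta\beta}$ using $|u|\leq 1$. Your plan has no analogue of this coercivity extraction, and that is the missing idea. (Incidentally, the standing hypothesis $\tfrac{sp}{p-1}\geq\tfrac{tq}{q-1}$ is not actually used in this proposition, so invoking it to "balance the kernels" is a red herring.)

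Two minor points: the paper uses the test function $\varphi=J_{\beta+1}(\delta_h u/|h|^\theta)\,\eta^p$ with $\beta=\mu-p+2$ and $\theta=\tfrac{(\mu-p+2)s-1}{\mu-p+2}$ rather than your $\eta^{p+1}J_{1+\beta}(\delta_h u)$ with $\beta=\mu-p$; the $|h|^\theta$ normalization and the precise exponent $\mu-p+2$ are what make the iteration gain exactly one integrability step, $L^\mu\to L^{\mu+1}$, so your choice would shift the bookkeeping. Also, the passage from first- to second-difference quotients is not Lemma \ref{emb1} (which goes the other way, from $\mathcal{B}^{\beta,q}_\infty$ to $\mathcal{N}^{\beta,q}_\infty$); it uses the identity $\delta_h u=\tfrac12(\delta_{2h}u-\delta_h^2 u)$ together with the Sobolev–Nikolskii estimates of \cite[Step 5]{BLS}.
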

\begin{proof}
 Define 
$
r=R-4h_0$ and take  $\varphi\in W^{s,p}(B_R)\cap W^{t,q}(B_R)$ vanishing outside $B_{\frac{R+r}{2}}$ and  $0<|h|<h_0$. Testing \eqref{wksol} with $\varphi$ and $\varphi_{-h}$ and performing a change of variable yields
\begin{equation}
\label{differentiated1}
\frac1h \sum_{(l,m,k)\in\{(p,s,1),(q,t,A)\}}\int_{\mathbb{R}^N} \int_{\mathbb{R}^N} k\Big(J_l(u_h(x)-u_h(y))-J_l(u(x)-u(y))\Big)\,\Big(\varphi(x)-\varphi(y)\Big)\,d\mu_{l,m}=0.
\end{equation}
  We insert
 \[
\varphi=J_{\beta+1}\left(\frac{\delta_h u}{|h|^{\theta}}\right)\,\eta^p, 
\]
in \eqref{differentiated1}, for some $\beta\geq 2$ and some $\theta$ such that $1+\theta\beta\geq 0$, where 
$\eta\in C^\infty_0(B_R)$ is such that
\begin{equation}\label{eta1p}
0\le \eta\le 1\text{ in }B_R,\qquad\eta\equiv 1\text{ in }B_r,\qquad \eta\equiv 0 \mbox{ in } \mathbb{R}^N\setminus B_{\frac{R+r}{2}},\qquad |\nabla \eta|\le \frac{C}{R-r}=\frac{C}{4\,h_0},
\end{equation}
for some constant $C=C(N)>0.$ This gives
\begin{equation}\label{deqn11}
\begin{split}
&\sum_{(l,m,k)\in\{(p,s,1),(q,t,A)\}}\iint_{\mathbb{R}^N\times\mathbb{R}^N} k\frac{\Big(J_l(u_h(x)-u_h(y))-J_l(u(x)-u(y))\Big)}{|h|^{1+\theta\,\beta}}\\
&\times\Big(J_{\beta+1}(u_h(x)-u(x))\,\eta(x)^p-J_{\beta+1}(u_h(y)-u(y))\,\eta(y)^p\Big)\,d\mu_{l,m}=0.
\end{split}
\end{equation}
This can be written as
\begin{equation}
\label{Il1}
\sum_{(l,m,k)\in\{(p,s,1),(q,t,A)\}}(\mathcal{I}_{1,l,m,k}+\mathcal{I}_{2,l,m,k}+\mathcal{I}_{3,l,m,k})=0,
\end{equation}
where
\[
\begin{split}
\mathcal{I}_{1,l,m,k}:=\iint_{B_R\times B_R}& k\frac{\Big(J_l(u_h(x)-u_h(y))-J_l(u(x)-u(y))\Big)}{|h|^{1+\theta\,\beta}}\\
&\times\Big(J_{\beta+1}(u_h(x)-u(x))\,\eta(x)^p-J_{\beta+1}(u_h(y)-u(y))\,\eta(y)^p\Big)d\mu_{l,m},
\end{split}
\]
\[
\begin{split}
\mathcal{I}_{2,l,m,k}:=\iint_{B_\frac{R+r}{2}\times (\mathbb{R}^N\setminus B_R)}& k\frac{\Big(J_l(u_h(x)-u_h(y))-J_l(u(x)-u(y))\Big)}{|h|^{1+\theta\,\beta}}\\
&\times J_{\beta+1}(u_h(x)-u(x))\,\eta(x)^p\,d\mu_{l,m},
\end{split}
\]
and
\[
\begin{split}
\mathcal{I}_{3,l,m,k}:=-\iint_{(\mathbb{R}^N\setminus B_R)\times B_\frac{R+r}{2}}& k\frac{\Big(J_l(u_h(x)-u_h(y))-J_l(u(x)-u(y))\Big)}{|h|^{1+\theta\,\beta}}\\
&\times J_{\beta+1}(u_h(y)-u(y))\,\eta(y)^p\,d\mu_{l,m},
\end{split}
\]
where we used that $\eta$ vanishes identically outside $B_{\frac{R+r}{2}}$.

We estimate $\mathcal{I}_{j,l,m,k}$ for $j=1,2,3$ and $(l,m,k)\in\{(p,s,1),(q,t,A)\}$, starting with the case $(l,m,k)=(q,t,A)$. To this end, we remark that $\eta^p=(\eta^\frac{p}{2})^2$ and $\eta^\frac{p}{2}$ is Lipschitz and enjoys the same properties as $\eta$ has, including \eqref{eta1p}. Taking into account that $1<q\leq 2$ and that $\|u\|_{L^\infty(B_1)}\leq 1$ we may proceed as in the proof of the estimate of $\mathcal{I}_1$ in \cite[p. 5767]{GL} and obtain\footnote{Note that in \cite{GL}, only the case $q<2$ is considered. However, when $q=2$ the equation is linear and the estimate is standard.}
\begin{equation}\label{estI1q1}
\begin{split}
\mathcal{I}_{1,q,t,A}
&\geq {c}A\left[\frac{|\delta_h u|^\frac{\beta-1}{2}\,\delta_h u}{|h|^\frac{1+\theta\,\beta}{2}}\,\eta^\frac{p}{2}\right]^2_{W^{\frac{tq}{2},2}(B_R)}-CA\int_{B_R} \frac{|\delta_h u(x)|^{q+\beta-1}}{|h|^{1+\theta\,\beta}} dx\\
& -CA\,\int_{B_R}\, \frac{|\delta_h u(x)|^{\beta+1}}{|h|^{1+\theta\,\beta}}dx.
\end{split}
\end{equation}
Here $c=c({p},q,\beta)>0$ and $C=C(N,p,t,q,\beta,h_0)>0$. Further, taking into account \eqref{bounds} and proceeding along the lines of the proof of the estimates (3.17) and (3.18) from \cite[page 5768]{GL}, we get
\begin{equation}
\label{estI2q1}
|\mathcal{I}_{2,q,t,A}|\leq CA\int_{B_R}\frac{|\delta_h u(x)|^{\beta}}{|h|^{1+\theta\,\beta}} dx,
\end{equation}
and
\begin{equation}
\label{estI3q1}
|\mathcal{I}_{3,q,t,A}|\leq CA\int_{B_R}\frac{|\delta_h u(x)|^{\beta}}{|h|^{1+\theta\,\beta}} dx,
\end{equation}
where $C=C(N,t,q,h_0)>0$ is a constant. Next, we estimate $\mathcal{I}_{j,l,m,k}$ for $(l,m,k)=(p,s,1)$.

Again, since $p\geq 2$, taking into account the properties of $\eta$ and proceeding along the lines of the proof of the estimates of $\mathcal{I}_1, \mathcal{I}_{11}$ and $\mathcal{I}_{12}$ from \cite[pages 814-819]{BLS}, we get
\begin{equation}\label{estI1p1}
\begin{split}
\mathcal{I}_{1,p,s,1}\ge c& \left[\frac{|\delta_h u|^\frac{\beta-1}{p}\,\delta_h u}{|h|^\frac{1+\theta\,\beta}{p}}\,{\eta}\right]^p_{W^{s,p}(B_R)}-C\,\mathcal{I}_{11,p,s,1}-C\,\mathcal{I}_{12,p,s,1}
\end{split}
\end{equation}
where $c=c(p,\beta)>0$ and $C=C(p,\beta)>0$. The terms $\mathcal{I}_{11,p,s,1}$ and $\mathcal{I}_{12,p,s,1}$ are defined by
\begin{equation}
\label{I111}
\begin{split}
\mathcal{I}_{11,p,s,1}&\quad:=\,\iint_{B_R\times B_R} \left(|u_h(x)-u_h(y)|^\frac{p-2}{2}+|u(x)-u(y)|^\frac{p-2}{2}\right)^2\\
&\times\left|\eta(x)^\frac{p}{2}-\eta(y)^\frac{p}{2}\right|^2\, \frac{|\delta_h u(x)|^{\beta+1}+|\delta_h u(y)|^{\beta+1}}{|h|^{1+\theta\,\beta}}\,d\mu_{p,s},
\end{split}
\end{equation}
and
\begin{equation}
\label{eq:I2original}
\begin{split}
\mathcal{I}_{12,p,s,1}&:=\,\iint_{B_R\times B_R}\, \left(\frac{|\delta_h u(x)|^{\beta-1+p}}{|h|^{1+\theta\,\beta}}+\frac{|\delta_h u(y)|^{\beta-1+p}}{|h|^{1+\theta\,\beta}}\right)\, |\eta(x)-\eta(y)|^p\,d\mu_{p,s}
\end{split}
\end{equation}
and enjoy the estimates
\begin{equation}\label{estI111}
|\mathcal{I}_{11,p,s,1}|\leq C\,\left(\int_{B_R}\frac{|\delta_h u(x)|^{\frac{\beta\,\mu}{\mu-p+2}}}{|h|^{(1+\theta\,\beta)\frac{\mu}{\mu-p+2}}}\, dx +\sup_{0<|h|< h_0}\left\|\frac{\delta^2_h u}{|h|^s}\right\|_{L^{\mu}(B_{R+4h_0})}^{\mu}+1\right)
\end{equation}
and
\begin{equation}\label{estI121}
|\mathcal{I}_{12,p,s,1}|\leq C\left( \int_{B_R} \frac{|\delta_h u|^{\frac{\beta \mu}{\mu-p+2}}}{|h|^{(1+\theta\,\beta)\,\frac{\mu}{\mu-p+2}}}\,dx+1\right),
\end{equation}
where $C=C(N,s,p,\mu,h_0)>0$. Now exactly as in the estimate (4.9) in \cite[page 820]{BLS}, we obtain
\begin{equation}\label{estI23p1}
\begin{split}
|\mathcal{I}_{2,p,s,1}|+|\mathcal{I}_{3,p,s,1}|&\leq C\,\int_{B_{\frac{R+r}{2}}}\frac{|\delta_h u|^{\beta}}{|h|^{1+\theta\beta}} dx\leq C\,\left(1+\int_{B_{R}}\left|\frac{\delta_h u}{|h|^{\frac{1+\theta\beta}{\beta}}}\right|^{\frac{\beta\,\mu}{\mu-p+2}}\,dx\right)
\end{split}
\end{equation}
where $C=C(N,s,p,h_0)>0$. By inserting the estimates \eqref{estI1q1}, \eqref{estI2q1}, \eqref{estI3q1}, \eqref{estI1p1}, \eqref{estI111}, \eqref{estI121} and \eqref{estI23p1} in \eqref{Il1}, we obtain
\begin{equation}\label{estIl1}
\begin{split}
&A\left[\frac{|\delta_h u|^\frac{\beta-1}{2}\,\delta_h u}{|h|^\frac{1+\theta\,\beta}{2}}\,{\eta^\frac{p}{2}}\right]^2_{W^{\frac{tq}{2},2}(B_R)}
+
\left[\frac{|\delta_h u|^\frac{\beta-1}{p}\,\delta_h u}{|h|^\frac{1+\theta\,\beta}{p}}\,{\eta}\right]^p_{W^{s,p}(B_R)}\\
&
\leq  CA\,\Big(\int_{B_R}  \frac{|\delta_h u(x)|^{q+\beta-1}}{|h|^{1+\theta\,\beta}} +\frac{|\delta_h u(x)|^{\beta+1 }}{|h|^{1+\theta\,\beta}}+\frac{|\delta_h u(x)|^{\beta}}{|h|^{1+\theta\,\beta}}dx\Big)\\
&\qquad\qquad+C\,\left(\int_{B_{R}}\left|\frac{\delta_h u}{|h|^{\frac{1+\theta\beta}{\beta}}}\right|^{\frac{\beta\,\mu}{\mu-p+2}}\,dx+\sup_{0<|h|< h_0}\left\|\frac{\delta^2_h u}{|h|^s}\right\|_{L^{\mu}(B_{R+4h_0})}^{\mu}+1\right),
\end{split}
\end{equation}
where $C=C(N,s,p,t,q,\beta,\mu,h_0)>0$. Now, we ignore the first term in the L.H.S. of \eqref{estIl1} and estimate the second term in the L.H.S. of \eqref{estIl1} exactly as in the estimate of (4.15) in \cite[page 822]{BLS} and insert the resulting estimate in \eqref{estIl1} to obtain
\begin{equation}\label{estIlfinal}
\begin{split}
&\sup_{0<|h|< h_0}\int_{B_r}\left|\frac{\delta^2_h u}{|h|^\frac{1+s\,p+\theta\,\beta}{\beta-1+p}}\right|^{\beta-1+p}\,dx\\
&
\leq  CA\sup_{0<|h|< h_0}\,\Big(\int_{B_R}  \frac{|\delta_h u(x)|^{q+\beta-1}}{|h|^{1+\theta\,\beta}} +\frac{|\delta_h u(x)|^{\beta+1 }}{|h|^{1+\theta\,\beta}}+\frac{|\delta_h u(x)|^{\beta}}{|h|^{1+\theta\,\beta}}dx\Big)\\
&\qquad\qquad+C\sup_{0<|h|< h_0}\,\left(\int_{B_{R}}\left|\frac{\delta_h u}{|h|^{\frac{1+\theta\beta}{\beta}}}\right|^{\frac{\beta\,\mu}{\mu-p+2}}\,dx+\left\|\frac{\delta^2_h u}{|h|^s}\right\|_{L^{\mu}(B_{R+4h_0})}^{\mu}+1\right)\\
&\leq C(A+1)\,\sup_{0<|h|< h_0}\,\left(\int_{B_{R}}\left|\frac{\delta_h u}{|h|^{\frac{1+\theta\beta}{\beta}}}\right|^{\frac{\beta\,\mu}{\mu-p+2}}\,dx+\left\|\frac{\delta^2_h u}{|h|^s}\right\|_{L^{\mu}(B_{R+4h_0})}^{\mu}+1\right),
\end{split}
\end{equation}
for $C=C(N,s,p,t,q,\beta,\mu,h_0)>0$, where the last inequality above follows by using that $|u|\leq 1$ in $B_1$ and $p\geq 2$.

 Now, choosing $\beta=\mu-p+2$ and $\theta=\frac{(\mu-p+2)\,s-1}{\mu-p+2}$ and further arguing exactly same as in Step 5 of \cite[pages 822-823]{BLS}, we obtain
$$
\sup_{0<|h|< h_0}\left\|\frac{\delta^2_h u}{|h|^{s}}\right\|_{L^{\mu+1}(B_{R-4\,h_0})}^{\mu+1}\leq C(A+1)\,\left(\sup_{0<|h|< h_0}\left\|\frac{\delta^2_h u }{|h|^s}\right\|_{L^{\mu}(B_{R+4h_0})}^{\mu}+1\right),
$$
for $C=C(N,s,p,t,q,\mu,h_0)>0$.
\end{proof}
Following the proof of \cite[Theorem 4.2]{BLS}, we iterate the previous result in Proposition \ref{prop:improve} to prove the following theorem.
\begin{Theorem}
\label{teo:1}(Almost $C^s$-regularity)
Let $\Omega\subset\mathbb{R}^N$ be a bounded and open set, $A>0$. Suppose $u\in X(\Omega)$ is a local weak solution of \eqref{eq:maineqnew} in $\Omega$.  
Then $u\in C^\delta_{\rm loc}(\Omega)$ for every $0<\delta<s$. 
\par
 In particular,  for every $0<\delta<s$ and every $r>0$  such that  $B_{2r}(x_0)\Subset\Omega$, there  are  constants {$d=d(N,s,p,\delta)\geq 1$ and $C=C(N,s,p,t,q,\delta)>0$}  such that  
\begin{equation}
\label{apriori}
[u]_{C^\delta(B_{r/2}(x_0))} \frac{C(1+Ar^{sp-tq}\mathcal{M}_r^{q-p})^d}{r^\delta}\mathcal{M}_r,
\end{equation}
where 
$$
\mathcal{M}_r=\|u\|_{L^\infty(B_{r}(x_0))}+\mathrm{Tail}_{p-1,s\,p}(u;x_0,r)+\mathrm{Tail}_{q-1,t\,q}(u;x_0,r)+1.
$$
\end{Theorem}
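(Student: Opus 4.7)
The strategy mirrors that of \cite[Theorem 4.2]{BLS}: we rescale to put ourselves in the setting of Proposition \ref{prop:improve}, and then iterate it finitely many times to push the integrability of second-order difference quotients of $u$ past the critical Sobolev threshold, at which point the embeddings from Lemma \ref{emb1} and Theorem \ref{emb2} deliver the H\"older estimate.

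\textbf{Step 1: Normalization.} By translation invariance we may take $x_0=0$. Set
\[
v(x)=\frac{u(r x)}{\mathcal{M}_r},
\]
and check by a change of variables in \eqref{wksol} that $v$ is a local weak solution in $B_2$ of
\[
(-\Delta_p)^s v + \widetilde{A}\,(-\Delta_q)^t v = 0, \qquad \widetilde{A}:=A\,r^{sp-tq}\mathcal{M}_r^{q-p}.
\]
The choice of $\mathcal{M}_r$ guarantees that $\|v\|_{L^\infty(B_1)}\le 1$ and that the tail conditions \eqref{bounds} hold for both $(p,s)$ and $(q,t)$. Thus Proposition \ref{prop:improve} applies to $v$ with the coefficient $\widetilde{A}$.

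\textbf{Step 2: Initialization.} The hypothesis $v\in W^{s,p}_{\mathrm{loc}}(B_2)$ combined with $\|v\|_{L^\infty(B_1)}\le 1$ provides an initial estimate of the form
\[
\sup_{0<|h|<h_0}\left\|\frac{\delta^2_h v}{|h|^{s}}\right\|_{L^{p}(B_{R_0})}^{p}\le C_0,
\]
for some $R_0<1$ and $h_0$ small, with $C_0$ depending only on $N,s,p,t,q$ (after absorbing the tail contributions via \eqref{bounds}). This furnishes the seed of the iteration.

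\textbf{Step 3: Iteration.} Fix $\delta<s$ and choose $k\in\mathbb{N}$ so that $s(p+k)>N$ and $\delta<s-\frac{N}{p+k}$. Pick a decreasing sequence of radii $R_0>R_1>\cdots>R_k$ inside $(\frac{1}{2},1)$ and apply Proposition \ref{prop:improve} $k$ times, at the $j$-th step with $\mu=p+j-1$ and radii $R_{j-1}\mapsto R_j$. Each step costs a multiplicative factor $C\,(\widetilde{A}+1)$, so after $k$ iterations
\[
\sup_{0<|h|<h_0}\left\|\frac{\delta^2_h v}{|h|^{s}}\right\|_{L^{p+k}(B_{R_k})}^{p+k}\le \bigl(C(\widetilde{A}+1)\bigr)^{k}\bigl(C_0+1\bigr),
\]
with $C=C(N,s,p,t,q,\delta,h_0)$ and $k=k(N,s,p,\delta)$.

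\textbf{Step 4: Passage to H\"older regularity and rescaling back.} A standard cut-off and multiplication by a suitable test function convert the local Besov-type estimate on $B_{R_k}$ into a global one on $\mathbb{R}^N$ after modifying $v$ outside $B_1$ (using that $\|v\|_{L^\infty(B_1)}\le 1$). Lemma \ref{emb1} then bounds $[v]_{\mathcal{N}^{s,p+k}_\infty(\mathbb{R}^N)}$ in terms of the above quantity, and Theorem \ref{emb2}, valid because $s(p+k)>N$, yields
\[
[v]_{C^{\delta}(B_{1/2})}\le C\bigl(1+\widetilde{A}\bigr)^{d}
\]
for an exponent $d=d(N,s,p,\delta)\ge 1$ coming from the iteration count together with the exponents appearing in Theorem \ref{emb2}. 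Undoing the scaling $v(x)=u(rx)/\mathcal{M}_r$ and substituting $\widetilde{A}=A\,r^{sp-tq}\mathcal{M}_r^{q-p}$ produces \eqref{apriori}.

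\textbf{Main obstacle.} The computational core is already contained in Proposition \ref{prop:improve}; the delicate points here are (i) choosing the scaling so that the mixed-order equation survives with the single combined parameter $\widetilde{A}$, rather than separate factors, and (ii) propagating the constant $C(\widetilde{A}+1)$ faithfully through the iteration so that it appears exactly to a power $d$ that depends only on $N,s,p,\delta$ (and not on $q,t$), which forces the iteration count to be controlled solely by how close $\delta$ is allowed to approach $s$.
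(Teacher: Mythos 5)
Your approach is exactly the paper's: rescale to $B_2$ with the combined coefficient $\widetilde A = A\,r^{sp-tq}\mathcal{M}_r^{q-p}$, iterate Proposition \ref{prop:improve} finitely many times, then pass to H\"older continuity via Lemma \ref{emb1} and Theorem \ref{emb2}. The iteration count $k$ chosen so that $s(p+k)>N$ and $\delta<s-\frac{N}{p+k}$ is precisely the paper's choice of $i_\infty$, and the cut-off argument in Step~4 is the same.

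There is, however, a real gap in Step~2. The qualitative hypothesis $v\in W^{s,p}_{\mathrm{loc}}(B_2)$ together with $\|v\|_{L^\infty(B_1)}\le 1$ and the normalized tails does \emph{not} by itself yield a quantitative seed estimate $\sup_{0<|h|<h_0}\|\delta^2_h v/|h|^s\|_{L^p(B_{R_0})}\le C_0$. One must use the equation: the paper invokes a Caccioppoli-type inequality (here \cite[Lemma 3.2]{GKS}) for the rescaled solution, which gives
\[
[v]_{W^{s,p}(B_{7/8+2h_0})}\leq C\,(\widetilde A+1),
\]
and only then converts this into the second-order difference quotient bound via \cite[Proposition 2.6]{Brolin}. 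The factor $(\widetilde A+1)$ is unavoidable, because the $q$-Laplace part of the rescaled equation carries the coefficient $\widetilde A$, so the Caccioppoli error terms inherit it. Consequently your claim that ``$C_0$ depends only on $N,s,p,t,q$'' is false; $C_0$ must also depend on $\widetilde A$. The repair is cosmetic — set $C_0=C(\widetilde A+1)^p$ and the final exponent $d$ simply increases by one — so \eqref{apriori} still follows with $d=d(N,s,p,\delta)$, but as written the initialization step both asserts an incorrect dependence and omits the Caccioppoli argument that actually produces the bound.
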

\begin{proof}
  By \cite[Proposition 3.3]{GKS}, $u\in L^\infty_{\rm loc}(\Omega)$. We assume without loss of generality that $x_0=0$  and set 
\[
\mathcal{M}_r=\|u\|_{L^\infty(B_{r})}+\mathrm{Tail}_{p-1,s\,p}(u;0,r)+\mathrm{Tail}_{q-1,t\,q}(u;0,r)+1>0.
\]
 It is sufficient to prove that the function 
\[
u_r(x):=\frac{1}{\mathcal{M}_r}\,u(r\,x),\qquad \mbox{ for }x\in B_2,
\]
satisfies the estimate
\[
[u_r]_{C^{\delta}(B_{\frac{1}{2}})}\leq C(1+Ar^{sp-tq}\mathcal{M}_r^{q-p})^d,
\]
{for some positive constants $C=C(N,s,p,t,q,\delta)$ and $d=d(N,s,p,t,q,\delta)$.}
 Indeed, by scaling,  we would  recover  the desired estimate \eqref{apriori}. Observe that by definition, the function $u_r$ 
is a local weak solution of $(-\Delta_p)^s u+Ar^{sp-tq}\mathcal{M}_r^{q-p}(-\Delta_q)^t u=0$ in $B_2$ and satisfies
\begin{equation}
\label{assumptionur}
\|u_r\|_{L^\infty(B_1)}\leq 1,\qquad \int_{\mathbb{R}^N\setminus B_1}\frac{|u_r(y)|^{l-1}}{|y|^{N+ml}}\,  dy\leq 1,
\end{equation}
for $(l,m)\in\{(p,s),(q,t)\}$. 
In what follows, we omit the subscript $r$ and simply write $u$ in place of $u_r$. 

Fix $0<\delta<s$ and choose $i_\infty\in\mathbb{N}\setminus\{0\}$ such that
\[
s-\delta> \frac{N}{p+i_\infty}
\]
 and  define the sequence of exponents  
\[
q_i=p+i,\qquad i=0,\dots,i_\infty.
\]
 In addition, we define 
$$
h_0=\frac{1}{64\,i_\infty},\qquad R_i=\frac{7}{8}-4\,(2i+1)\,h_0=\frac{7}{8}-\frac{2i+1}{16\,i_\infty},\qquad \mbox{ for } i=0,\dots,i_\infty,
$$
 and remark that this implies 
\[
R_0+4\,h_0=\frac{7}{8}\qquad \mbox{ and }\qquad R_{i_\infty-1}-4\,h_0=\frac{3}{4}.
\] 
By applying Proposition \ref{prop:improve} with 
\[
R=R_i\qquad \mbox{ and }\qquad \mu=q_i=p+i,\qquad \mbox{ for } i=0,\ldots,i_\infty-1,
\] 
and observing that $R_i-4\,h_0=R_{i+1}+4\,h_0$, $4h_0<R_i\leq 1-5h_0$ and $q\leq p$,
we obtain the  following scheme of  inequalities
\[
\left\{\begin{array}{rcll}
\sup\limits_{0<|h|< h_0}\left\|\dfrac{\delta^2_h u}{|h|^{s}}\right\|_{L^{q_1}(B_{R_1+4h_0})}\leq& C (Ar^{sp-tq}\mathcal{M}_r^{q-p}+1)\,\sup\limits_{0<|h|< h_0}\left(\left\|\dfrac{\delta^2_h u }{|h|^s}\right\|_{L^p(B_{\frac{7}{8}})}+1\right)
\\
\sup\limits_{0<|h|<h_0}\left\|\dfrac{\delta^2_h u}{|h|^{s}}\right\|_{L^{q_i+1}(B_{R_{i+1}+4h_0})}\leq&C(Ar^{sp-tq}\mathcal{M}_r^{q-p}+1)\,\sup\limits_{0<|h|< h_0}\left(\left\|\dfrac{\delta^2_h u }{|h|^s}\right\|_{L^{q_i}(B_{R_i+4h_0})}+1\right)
\end{array}
\right.
\]
and finally
\begin{align*}
\sup_{0<|h|< {h_0}}\left\|\frac{\delta^2_h u}{|h|^{s}}\right\|_{L^{q_{i_\infty}}(B_{\frac{3}{4}})}&=\sup_{0<|h|< h_0}\left\|\frac{\delta^2_h u}{|h|^{s}}\right\|_{L^{p+i_\infty}(B_{R_{i_\infty-1}-4h_0})}\\
&\leq C(Ar^{sp-tq}\mathcal{M}_r^{q-p}+1)\sup_{0<|h|< h_0}\left(\left\|\frac{\delta^2_h u }{|h|^s}\right\|_{L^{p+i_\infty-1}(B_{R_{i_\infty-1}+4h_0})}+1\right).
\end{align*}
Here $C=C(N,s,p,t,q,\delta,h_0)>0$. Using \eqref{assumptionur} and proceeding exactly as in the proof of \cite[Lemma 3.2]{GKS}, we get 
\begin{equation}\label{fsn}
[u_r]_{W^{s,p}(B_{\frac{7}{8}+2h_0})}\leq C(Ar^{sp-tq}\mathcal{M}_r^{q-p}+1),
\end{equation}
for some $C=C(N,s,p,t,q)>0$.   By {\cite[Proposition 2.6, estimate (2.11)]{Brolin}} combined  with the relation 
$$
\delta_h u =\frac12\left(\delta_{2h} u-\delta^2_{h}u\right),
$$ 
we also have
\begin{align}\label{eq:1sttofrac}
\sup_{0<|h|< h_0}\left\|\frac{\delta^2_h u }{|h|^s}\right\|_{L^{p}(B_{\frac{7}{8}})}&\leq C\,\sup_{0<|h|<2\,h_0}\left\|\frac{\delta_h u }{|h|^s}\right\|_{L^{p}(B_{\frac{7}{8}})}\nonumber \\
&\leq C\left([u]_{W^{s,p}(B_{\frac{7}{8}+2\,h_0})}+\|u\|_{L^\infty(B_{\frac{7}{8}+2\,h_0})}\right)\\
&\leq C(Ar^{sp-tq}\mathcal{M}_r^{q-p}+1),\nonumber 
\end{align}
where $C=C(N,s,p,t,q,\delta,h_0)>0$ and where we used \eqref{assumptionur} and \eqref{fsn}.  Therefore, the  above scheme of inequalities implies 
\begin{equation}
\sup_{0<|h|< {h_0}}\left\|\frac{\delta^2_h u}{|h|^{s}}\right\|_{L^{q_{i_\infty}}(B_{\frac{3}{4}})}\leq C(Ar^{sp-tq}\mathcal{M}_r^{q-p}+1)^{i_\infty+1},
\label{otherest}
\end{equation}
where $C(N,s,p,t,q,\delta,h_0)>0$.
Take $\chi\in C_0^\infty(B_{\frac{5}{8}})$ such that 
$$
0\leq \chi\leq 1, \qquad \chi \equiv 1 \text{ in $B_{\frac{1}{2}}$},\qquad |\nabla \chi|\leq C,\qquad |D^2 \chi|\leq C,
$$
for some constant $C=C(N)>0$.  Then  we have for all $|h| > 0$ that
$$
\frac{|\delta_h\chi|}{|h|^s}\leq C,\qquad \frac{|\delta^2_h\chi|}{|h|^s}\leq C,
$$
for some constant $C=C(N)>0$.
We also note that
$$
\delta^2_h (u\,\chi)=\chi_{2h}\,\delta^2_h u+2\,\delta_h u\, \delta_h \chi_h+u\,\delta^2_h\chi.
$$
Hence, we have
\begin{equation}\label{Neq-c1}
\begin{split}
[u\,\chi]_{\mathcal{B}^{s,q_{i_\infty}}_\infty(\mathbb{R}^N)}&\leq C\left(\sup_{0<|h|< h_0}\left\|\frac{\delta^2_h (u\,\chi)}{|h|^s}\right\|_{L^{q_{i_\infty}}(\mathbb{R}^N)}+1\right)\\&\leq C\sup_{0<|h|< h_0}\,\left(\left\|\frac{\chi_{2h}\,\delta^2_h u}{|h|^s}\right\|_{L^{q_{i_\infty}}(\mathbb{R}^N)}+\left\|\frac{\delta_h u\,\delta_h\chi}{|h|^s}\right\|_{L^{q_{i_\infty}}(\mathbb{R}^N)}+\left\|\frac{u\,\delta^2_h\chi}{|h|^s}\right\|_{L^{q_{i_\infty}}(\mathbb{R}^N)}+1\right)\\
&\leq C\sup_{0<|h|< h_0}\,\left(\left\|\frac{\delta^2_h u}{|h|^s}\right\|_{L^{q_{i_\infty}}(B_{\frac{5}{8}+2\,h_0})}+\|\delta_h u\|_{L^{q_{i_\infty}}(B_{\frac{5}{8}+h_0})}+\|u\|_{L^{q_{i_\infty}}(B_{\frac{5}{8}+2h_0})}+1\right) \\
&\leq C\sup_{0<|h|< h_0}\,\left(\left\|\frac{\delta^2_h u}{|h|^s}\right\|_{L^{q_{i_\infty}}(B_{\frac{3}{4}})}+\|u\|_{L^{q_{i_\infty}}(B_{\frac{3}{4}})}+1\right)\\
&\leq C(Ar^{sp-tq}\mathcal{M}_r^{q-p}+1)^{i_\infty+1},
\end{split}
\end{equation}
by \eqref{otherest}, {where $C=C(N,s,p,t,q,\delta,h_0)>0$}. By Lemma \ref{emb1}, we have
\begin{equation}\label{Neq2-c1}
[u\,\chi]_{\mathcal{N}_\infty^{s,q_{i_\infty}}(\mathbb{R}^N)}\leq C(\,[u\,\chi]_{\mathcal{B}_\infty^{s,q_{i_\infty}}(\mathbb{R}^N)}+1) \leq C(Ar^{sp-tq}\mathcal{M}_r^{q-p}+1)^{i_\infty+1},
\end{equation}
where $C=C(N,s,p,t,q,\delta,h_0)>0$. Finally,  we note that the choice of $i_\infty$ implies 
\[
s\,q_{i_\infty}>N\qquad \mbox{and }\qquad \delta<s-\frac{N}{q_{i_\infty}}.
\] 
Therefore,  using  Theorem~\ref{emb2} with  $\beta=s$, $\alpha=\delta$ and $q=q_{i_\infty}$  we obtain
\begin{align*}
[u]_{C^\delta(B_{\frac{1}{2}})}&= [u\,\chi]_{C^\delta(B_{\frac{1}{2}})}\\
&\leq C\left([u\,\chi]_{\mathcal{N}_\infty^{s,q_{i_\infty}}(\mathbb{R}^N)}\right)^{\frac{\delta\,q_{i_\infty}+N}{s\,q_{i_\infty}}}\,\left(\|u\,\chi\|_{L^q(\mathbb{R}^N)}\right)^\frac{(s-\delta)\,q_{i_\infty}-N}{s\,q_{i_\infty}}\\
&\leq C(N,s,p,t,q,\delta,h_0)(Ar^{sp-tq}\mathcal{M}_r^{q-p}+1)^\frac{(\delta q_{i_\infty}+N)(i_\infty+1)}{sq_{i_\infty}}.
\end{align*}
Defining $d=\frac{(\delta q_{i_\infty}+N)(i_\infty+1)}{sq_{i_\infty}}+1$, concludes the proof, upon recalling that $i_\infty$ and $h_\infty$ are chosen as functions of $N,p,s$ and $\delta$.
\end{proof}
\begin{Remark}
\label{rem:flexibility}  We may use a covering argument combined with \eqref{apriori} to obtain a more general estimate under the assumptions of Theorem \ref{teo:1}:  for every $0<\sigma<1$, we have
\[
\begin{split}
[u]_{C^\delta(B_{\sigma r}(x_0))}&\leq \ \frac{C(1+Ar^{sp-tq}\mathcal{M}_r^{q-p})^d}{r^\delta}\\&\times \left(\|u\|_{L^\infty(B_{r}(x_0))}+\mathrm{Tail}_{p-1,s\,p}(u;x_0,r)+\mathrm{Tail}_{q-1,t\,q}(u;x_0,r)+1\right),
\end{split}
\]
where $C$ will not now depend on $\sigma$ as well.  Indeed, if {$0<\sigma\le \frac{1}{2}$}, then this follows directly from \eqref{apriori}  . If $\frac{1}{2}<\sigma<1$, we can cover $\overline{B_{\sigma\,r}(x_0)}$ with a finite number of balls $B_{\frac{R}{2}}(x_1),\dots,B_{\frac{R}{2}}(x_k)$, where
\[
x_i\in B_{\frac{r}{2}}(x_0)\qquad \mbox{ and }\qquad R=\frac{r}{2}.
\]
By  applying  \eqref{apriori} on each ball $B_{2R}(x_i)\Subset B_{2r}(x_0)\Subset\Omega$, we obtain  
\[\begin{split}
&[u]_{C^\delta(B_{\frac{R}{2}}(x_i))}\leq  \frac{C(1+AR^{sp-tq}\mathcal{M}_R(x_i)^{q-p})^d}{R^\delta}\\
&\times \left(\|u\|_{L^\infty(B_{R}(x_i))}+\mathrm{Tail}_{p-1,s\,p}(u;x_i,R)+\mathrm{Tail}_{q-1,t\,q}(u;x_i,R)+1\right),
\end{split}
\]
where
$$
\mathcal{M}_R(x_i)=\|u\|_{L^\infty(B_{R}(x_i))}+\mathrm{Tail}_{p-1,s\,p}(u;x_i,R)+\mathrm{Tail}_{q-1,t\,q}(u;x_i,R)+1.
$$
 Since $B_{R}(x_i)\subset B_r(x_0)$, we may sum over $i$ and use \cite[Lemma 2.3]{BLS} for the tail terms, in order to reach the desired conclusion. 
\end{Remark}
\subsection{Improved H\"older regularity}
\label{sec:higher}
{By using Theorem \ref{teo:1},} i.e., by using that a solution to the homogeneous equation is locally $\delta$-H\"older continuous for any $0<\delta<s$, we establish the following  enhanced version  of Proposition \ref{prop:improve}, where both the integrability and the differentiability are improved.
\begin{prop}\label{prop:improve2I}(Improved H\"older regularity)
Let $A>0$ and $u\in  X(B_2)$ be a local weak solution of \eqref{eq:maineqnew} in $B_2$  such that 
\[
\|u\|_{L^\infty(B_1)}\leq 1\qquad \mbox{ and }\qquad  \int_{\mathbb{R}^N\setminus B_1} \frac{|u(y)|^{l-1}}{|y|^{N+m\,l}}\,  dy\leq 1,
\]
for $(l,m)\in\{(p,s),(q,t)\}$.
Then for $0<h_0<\frac{1}{10}$, $0<\theta<1$ and $\beta>1$ such that $\frac{1+\theta\,\beta}{\beta}<1$ and for $R$ such that  $4\,h_0<R\le 1-5\,h_0$,
we have
$$
\sup_{0<|h|< h_0}\left\|\frac{\delta^2_h u}{|h|^{\frac{1+s\,p+\theta\,\beta}{\beta-1+p}}}\right\|_{L^{\beta-1+p}(B_{R-4\,h_0})}^{\beta-1+p}\leq C(1+A)^d\,\sup_{0<|h|< h_0}\left(\left\|\frac{\delta^2_h u }{|h|^\frac{1+\theta\, \beta}{\beta}}\right\|_{L^\beta(B_{R+4\,h_0})}^\beta+1\right).
$$
Here $d=d(N,s,p,h_0)\geq 1$ and $C=C(N,s,p,t,q,\beta,h_0)>0$.
\end{prop}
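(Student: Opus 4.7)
The plan is to revisit the proof of Proposition \ref{prop:improve}, replacing the fixed reference exponent $s$ on the right-hand side by the flexible value $(1+\theta\beta)/\beta$, which is strictly less than $1$ by hypothesis. This condition is precisely what is needed for Lemma \ref{emb1} to convert second-order difference-quotient norms into first-order ones, so the iteration step goes through cleanly. I would begin by testing the differentiated equation \eqref{differentiated1} with
$\varphi = J_{\beta+1}(\delta_h u/|h|^\theta)\,\eta^p,$
where $\eta\in C_0^\infty(B_R)$ is the cutoff \eqref{eta1p}, producing the decomposition \eqref{Il1}. The BLS lower bound \eqref{estI1p1} for $\mathcal{I}_{1,p,s,1}$ (valid since $p\geq 2$) and the GL lower bound \eqref{estI1q1} for $\mathcal{I}_{1,q,t,A}$ (valid since $q\leq 2$) apply verbatim, producing the desired Sobolev-Slobodecki\u{\i}-type seminorms on the left. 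The tail terms $\mathcal{I}_{2,l,m,k}, \mathcal{I}_{3,l,m,k}$ are estimated as in \eqref{estI2q1}--\eqref{estI3q1} and \eqref{estI23p1}, yielding integrals of $|\delta_h u|^\beta/|h|^{1+\theta\beta}$.

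For the remainders $\mathcal{I}_{11,p,s,1}$ and $\mathcal{I}_{12,p,s,1}$, one needs modified versions of \eqref{estI111}--\eqref{estI121} in which the parameter $\mu$ is taken equal to $\beta$ and the Besov reference exponent $s$ is replaced by $(1+\theta\beta)/\beta$; the corresponding H\"older split yields bounds in terms of $\sup_{0<|h|< h_0}\|\delta_h^2 u/|h|^{(1+\theta\beta)/\beta}\|_{L^\beta(B_{R+4h_0})}^\beta$ plus higher-power integrals of $\delta_h u$. Using $\|u\|_{L^\infty(B_1)}\leq 1$, hence $|\delta_h u|\leq 2$ on $B_R$, all integrands of the form $|\delta_h u|^\gamma/|h|^{1+\theta\beta}$ with $\gamma\geq \beta$ collapse to constants times $\int_{B_R}|\delta_h u|^\beta/|h|^{1+\theta\beta}\,dx$. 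The second assertion of Lemma \ref{emb1}, applicable precisely because $(1+\theta\beta)/\beta<1$, then bounds this last integral by $\sup_{0<|h|< h_0}\|\delta_h^2 u/|h|^{(1+\theta\beta)/\beta}\|_{L^\beta(B_{R+4h_0})}^\beta$ plus a constant depending on $h_0$ and $\|u\|_{L^\beta(B_{R+4h_0})}$, the latter being bounded thanks to \eqref{bounds}.

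Discarding the non-negative $q$-Laplace seminorm on the left, the reorganization from \cite[estimate (4.15)]{BLS} converts the $W^{s,p}$-seminorm into the desired $L^{\beta-1+p}$-bound on $\delta_h^2 u/|h|^{(1+sp+\theta\beta)/(\beta-1+p)}$, and all accumulated powers of $(1+A)$ are collected into a single factor $(1+A)^d$. The main obstacle is deriving the modified forms of \eqref{estI111}--\eqref{estI121} with reference exponent $(1+\theta\beta)/\beta$ in place of $s$: the original BLS H\"older split was tailored to $s$, so one must redo the interpolation with $\mu=\beta$ and check that the singular kernel $|x-y|^{-N-sp}$ remains controlled by the Lipschitz norm of the cutoff $\eta$. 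Once these variants are established, the rest of the argument is a direct adaptation of the proof of Proposition \ref{prop:improve}.
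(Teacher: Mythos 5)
Your proposal misses the single essential new ingredient in the paper's proof of Proposition~\ref{prop:improve2I}: Theorem~\ref{teo:1} (almost $C^s$-regularity). The paper does \emph{not} redo the H\"older split at all. Instead, having already established $[u]_{C^{s-\varepsilon}(B_R)}\leq C(1+A)^d$ via Theorem~\ref{teo:1} and Remark~\ref{rem:flexibility}, it bounds $|u(x)-u(y)|^{p-2}\leq C(1+A)^{d(p-2)}|x-y|^{(s-\varepsilon)(p-2)}$ pointwise inside $\mathcal{I}_{11,p,s,1}$. Combined with $|\eta(x)^{p/2}-\eta(y)^{p/2}|^2\lesssim |x-y|^2$, this makes the kernel $|x-y|^{-N+2(1-s)-\varepsilon(p-2)}$ integrable in $y$ (for the right choice of $\varepsilon$), and the whole of $\mathcal{I}_{11,p,s,1}$ collapses to $C(1+A)^d\int_{B_R}|\delta_h u|^\beta/|h|^{1+\theta\beta}\,dx$ — with \emph{no} Besov-type term on the right at all, see \eqref{newI11ps1}. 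That collapse is precisely what lets the Besov order on the right-hand side be $(1+\theta\beta)/\beta$ (coming only from the first-to-second difference conversion via Lemma~\ref{emb1}) rather than $s$.

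What you propose instead — a ``modified H\"older split'' with $\mu=\beta$ and ``Besov reference exponent $(1+\theta\beta)/\beta$ in place of $s$'' — does not work, and you yourself flag it as the main unresolved obstacle. In the BLS-type split underlying \eqref{estI111}, the exponent $s$ in $\sup_{|h|<h_0}\|\delta^2_h u/|h|^s\|_{L^\mu}$ is determined by the homogeneity of the kernel $|x-y|^{-N-sp}$ after the interpolation; it is not a free parameter one can slide to $(1+\theta\beta)/\beta$. Likewise, with the constraint $\mu\geq p$ the exponent of $|h|$ in the first term of \eqref{estI111} is $(1+\theta\beta)\frac{\mu}{\mu-p+2}>1+\theta\beta$ whenever $p>2$, so even bounding $|\delta_h u|\leq 2$ never produces the clean $\int_{B_R}|\delta_h u|^\beta/|h|^{1+\theta\beta}\,dx$ needed to continue. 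Taking $\mu=\beta$ does not fix this: the exponent $\frac{\mu}{\mu-p+2}$ only equals $1$ if $p=2$. To make the proposition work one genuinely needs the a priori H\"older estimate of Theorem~\ref{teo:1} to kill the $|u(x)-u(y)|^{p-2}$ factor; it cannot be absorbed by re-parametrizing the H\"older inequality. The rest of your outline (the lower bounds \eqref{estI1p1}, \eqref{estI1q1}, the tail terms, the use of Lemma~\ref{emb1} with $(1+\theta\beta)/\beta<1$, and the conversion via the estimate (4.15) of \cite{BLS}) matches the paper, but without the $C^{s-\varepsilon}$ input the central estimate is not obtained.
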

\begin{proof}  The proof is a slight variant of the proof of of Proposition \ref{prop:improve}.  The only difference is that we estimate the term $\mathcal{I}_{11,p,s,1}$ defined in \eqref{I111} differently. We fix $\varepsilon=\varepsilon(p,s)$ such that 
\[
0<\varepsilon<\min\left\{s,\frac{2(1-s)}{p-2}\right\}.
\]
Using Theorem~\ref{teo:1} and Remark \ref{rem:flexibility}, we have
\begin{equation}\label{csuse1}
\begin{split}
[u]_{C^{s-\varepsilon}(B_R)}&\leq C(AR^{sp-tq}\mathcal{M}_R^{q-p}+1)^d\leq C(1+A)^d,
\end{split}
\end{equation}
where we in the last inequality used that $4h_0<R\leq 1-5h_0$ and $\mathcal{M}_R>1$ along with $q\leq p$. Here $C=C(N,s,p,t,q,h_0)>0$ and $d=d(N,s,p,h_0)\geq 1$ is a constant.
Using \eqref{csuse1} together with the  properties of  $\eta$ {in \eqref{eta1p}}, we  obtain 
$$
\frac{|u(x)-u(y)|^{p-2}\,\left|\eta(x)^\frac{p}{2}-\eta(y)^\frac{p}{2}\right|^2}{|x-y|^{N+s\,p}}\leq C(N,s,p,t,q,h_0)(1+A)^d\,|x-y|^{-N+2\,(1-s)-\varepsilon\,(p-2)}.
$$
 Since the last exponent is strictly bigger than $-N$ by our choice of $\varepsilon$, we may conclude 
$$
\int_{B_R}\frac{|u(x)-u(y)|^{p-2}\,\left|\eta(x)^\frac{p}{2}-\eta(y)^\frac{p}{2}\right|^2}{|x-y|^{N+s\,p}} dy\leq C(N,s,p,t,q,h_0)(1+A)^d,
$$
for any $x\in B_R$.  This yields 
\begin{equation}\label{newI11ps1}
\begin{split}
|\mathcal{I}_{11,p,s,1}|&\leq C(1+A)^d\,\int_{B_R}\frac{|\delta_h u(x)|^{\beta+1}}{|h|^{1+\theta\,\beta}} dx\\
&\leq C(1+A)^d\,\|u\|_{L^\infty(B_{R+h_0})}\,\int_{B_R}\frac{|\delta_h u(x)|^{\beta}}{|h|^{1+\theta\,\beta}} dx\\
&\leq C(1+A)^d\,\int_{B_R}\frac{|\delta_h u(x)|^{\beta}}{|h|^{1+\theta\,\beta}} dx,\qquad \mbox{for some } C=C(N,s,p,t,q,h_0)>0.
\end{split}
\end{equation}
 Recalling \eqref{eq:I2original} we may also obtain the estimate 
\begin{equation}\label{newI12ps1}
|\mathcal{I}_{12,p,s,1}|\leq C\int_{B_R} \frac{|\delta_h u(x)|^{\beta-1+p}}{|h|^{1+\theta\,\beta}}\,dx\leq C\int_{B_R}\frac{|\delta_h u(x)|^{\beta}}{|h|^{1+\theta\,\beta}} dx,\qquad \mbox{ for some }C=C(N,s,p,t,q,h_0)>0,
\end{equation}
where we used that $\|u\|_{L^\infty(B_1)}\leq 1$.  Morever, the first inequality in \eqref{estI23p1} implies 
\begin{equation}\label{newI23ps1}
|\mathcal{I}_{2,p,s,1}|+|\mathcal{I}_{3,p,s,1}|\leq C\int_{B_R}\frac{|\delta_h u(x)|^{\beta}}{|h|^{1+\theta\,\beta}} dx, \qquad C=C(N,s,p,t,q,h_0)>0.
\end{equation}
Combining these new estimates \eqref{newI11ps1}, \eqref{newI12ps1}, \eqref{newI23ps1} with \eqref{estI1q1}, \eqref{estI2q1}, \eqref{estI3q1} and \eqref{estI1p1} in \eqref{Il1} and taking into account the estimates (4.12), (4.13) and (4.14) of \cite{BLS}, we obtain
$$
\sup_{0<|h|< h_0}\int_{B_r}\left|\frac{\delta^2_h u(x)}{|h|^\frac{1+s\,p+\theta\,\beta}{\beta-1+p}}\right|^{\beta-1+p}\,dx\leq C(1+A)^d\sup_{0<|h|< h_0}\int_{B_R}\left|\frac{\delta_h u(x)}{|h|^\frac{1+\theta\,\beta}{\beta}}\right|^\beta\,dx,
$$
where $C=C(N,s,p,t,q,\beta,h_0)>0$.
By using the second estimate in \cite[Lemma 2.6]{BLS} and using that
\[
\frac{1+\theta\,\beta}{\beta}<1,
\] 
 we may switch from first order differential quotients to second order differential quotients in the right-hand side, and arrive at
$$
\sup_{0<|h|< h_0}\int_{B_r}\left|\frac{\delta^2_h u(x)}{|h|^\frac{1+s\,p+\theta\,\beta}{\beta-1+p}}\right|^{\beta-1+p}\,dx\leq C(1+A)^d\left(\sup_{0<|h|< h_0}\int_{B_R+4h_0}\left|\frac{\delta^2_h u(x)}{|h|^\frac{1+\theta\,\beta}{\beta}}\right|^\beta\,dx+1\right),
$$
where $C=C(N,s,p,t,q,\beta,h_0)>0.$  Since $r=R-4\,h_0$, this implies the desired result.
\end{proof}

\subsection{Proof of the main result: Theorem \ref{teo:1higher}}
\begin{proof}
{We assume $x_0=0$. The proof is similar when $x_0\neq 0$.}  Again,  as in the proof of Theorem~\ref{teo:1}, it is enough to prove that for every $\e\in(0,\Gamma)$, there are constants $\sigma=\sigma(s,p,q,\e)$, $C=C(N,s,p,t,q,\e)>0$ and $\beta=\beta(N,s,p,\e)\geq 1$ such that 
\begin{equation}\label{mt-c1}
[u_r]_{C^{\Gamma-\e}(B_{\sigma})}\leq C(1+Ar^{sp-tq}\mathcal{M}_r^{q-p})^\beta,
\end{equation}
where
$$
u_r(x)=\frac{u(rx)}{\mathcal{M}_r}.
$$
For the rest of the proof, we omit the subscript $r$ and simply write $u$ instead of $u_r$.  From this point, the proof is identical to the proof of \cite[Theorem 5.2]{BLS} except that there are some additional constants that appear in the proof. For the sake of completeness, we present parts of the identical proof. 
For $i\in\mathbb{N}$, we define the sequences of exponents
$$
\beta_0=p,\qquad\qquad  \beta_{i+1}=\beta_i+p-1, 
$$
and
$$
\theta_0=s-\frac{1}{p},\qquad \theta_{i+1}=\frac{\theta_i\,\beta_i+s\,p}{\beta_{i+1}}=\theta_i\,\frac{p+i\,(p-1)}{p+(i+1)(p-1)}+\frac{s\,p}{p+(i+1)(p-1)}.
$$
By induction, we see that $\{\theta_i\}_{i\in\mathbb{N}}$ is explicitly given by the increasing sequence
$$
 \theta_i =\left(s-\frac{1}{p}\right)\,\frac{p}{p+i\,(p-1)}+\frac{s\,p\,i}{p+i\,(p-1)},
$$
and thus
$$
\lim_{i\to\infty} \theta_i = \frac{s\,p}{p-1}.
$$
The proof is now split into two different cases.
\vskip.2cm\noindent
{\bf  Case 1: $s\,p\leq (p-1)$.} 
Fix $0<\e<\frac{sp}{p-1}$ and define $\delta=\frac{sp}{p-1}-\e$. Further, choose $j_\infty\in\mathbb{N}\setminus\{0\}$ such that
\[
\delta<\frac{1}{\beta_{j_\infty}}+\theta_{j_\infty}-\frac{N}{\beta_{j_\infty}}.
\]
This is clearly possible since 
\[
\lim_{j\to\infty} \beta_j=\infty,\qquad \lim_{j\to\infty}\theta_j=\frac{s\,p}{p-1}\qquad \mbox{ and } \qquad \delta< \frac{s\,p}{p-1}.
\]
Define also as in the proof of Theorem \ref{teo:1}
$$
h_0=\frac{1}{64\,j_\infty},\qquad R_j=\frac{7}{8}-4\,(2\,j+1)\,h_0=\frac{7}{8}-\frac{2\,j+1}{16\,j_\infty},\qquad \mbox{ for } j=0,\dots,j_\infty.
$$
We note that 
\[
R_0+4\,h_0=\frac{7}{8}\qquad \mbox{ and }\qquad R_{j_\infty-1}-4\,h_0=\frac{3}{4}.
\] 
By applying Proposition\footnote{Note that in this case we will always have $1+\theta_j\beta_j<\beta_j$, so that Proposition \ref{prop:improve2I} is applicable.}{\ref{prop:improve2I}}
 with
\[
R=R_j, \qquad \theta=\theta_j \qquad \mbox{ and }\qquad \beta=\beta_j,\quad \mbox{ for } j=0,\ldots,j_\infty-1,
\] 
and observing that $R_j-4\,h_0=R_{j+1}+4\,h_0$ and that by construction
$$
\frac{1+s\,p+\theta_j\,\beta_j}{\beta_j+(p-1)}=\frac{1+\theta_{j+1}\,\beta_{j+1}}{\beta_{j+1}},
$$
we obtain the iterative scheme of estimates
\[
\sup\limits_{0<|h|< h_0}\left\|\dfrac{\delta^2_h u}{|h|^{\frac{1+\theta_1\beta_1}{\beta_1}}}\right\|_{L^{\beta_1}(B_{R_1+4h_0})}\\
\leq  C(Ar^{sp-tq}\mathcal{M}_r^{q-p}+1)^d\,\sup\limits_{0<|h|< h_0}\left(\left\|\dfrac{\delta^2_h u }{|h|^s}\right\|_{L^{p}(B_{\frac{7}{8}})}+1\right),
\]
and
\[\begin{split}
&\sup\limits_{0<|h|< h_0}\left\|\dfrac{\delta^2_h u}{|h|^{\frac{1+\theta_{j+1}\beta_{j+1}}{\beta_{j+1}}}}\right\|_{L^{\beta_{j+1}}(B_{R_{j+1}+4h_0})}
\\
&\leq  C(Ar^{sp-tq}\mathcal{M}_r^{q-p}+1)^d\,\sup\limits_{0<|h|< h_0}\left(\left\|\dfrac{\delta^2_h u }{|h|^{\frac{1+\theta_{j}\beta_{j}}{\beta_{j}}}}\right\|_{L^{\beta_i}(B_{R_j+4\,h_0})}+1\right),
\end{split}
\]
for $j=1,\ldots,j_\infty-2$, and finally
\[\begin{split}
&\sup_{0<|h|< {h_0}}\left\|\frac{\delta^2_h u}{|h|^{\frac{1}{\beta_{j_\infty}}+\theta_{j_\infty}}}\right\|_{L^{\beta_{j_\infty}}(B_{\frac{3}{4}})}\\&\leq C(Ar^{sp-tq}\mathcal{M}_r^{q-p}+1)^d \sup_{0<|h|< h_0}\left(\left\|\frac{\delta^2_h u }{|h|^{\frac{1+\theta_{j_\infty-1}\beta_{j_\infty-1}}{\beta_{j_\infty-1}}}}\right\|_{L^{\beta_{j_\infty-1}}(B_{R_{j_\infty-1}+4\,h_0})}+1\right).
\end{split}
\]
As in \eqref{eq:1sttofrac} we have
$$
\sup_{0<|h|< h_0}\left\|\frac{\delta^2_h u }{|h|^s}\right\|_{L^{p}(B_{\frac{7}{8}})}\leq C(Ar^{sp-tq}\mathcal{M}_r^{q-p}+1),\nonumber 
$$
where $C=C(N,s,p,t,q,\e)>0$.
Hence, the previous iterative scheme of inequalities implies
\begin{equation}
\label{differentiability}
\sup_{0<|h|< {h_0}}\left\|\frac{\delta^2_h u}{|h|^{\frac{1}{\beta_{j_\infty}}+\theta_{j_\infty}}}\right\|_{L^{\beta_{j_\infty}}(B_{\frac{3}{4}})}\leq C(Ar^{sp-tq}\mathcal{M}_r^{q-p}+1)^d,
\end{equation}
where
$C=C(N,s,p,t,q,\e)>0$ and $d=d(N,s,p,\e)\geq 1$ are some constants.
From here we may repeat the arguments at the end of the proof of Theorem~\ref{teo:1} ({see \eqref{Neq-c1} and \eqref{Neq2-c1}}) and use Theorem~\ref{emb2} with $\beta = \frac{1}{\beta_{j_\infty}}+\theta_{j_\infty}$, $q=\beta_{j_\infty}$ and $\alpha =\delta=\Gamma-\e=\frac{sp}{p-1}-\e$ to obtain 
\begin{equation}\label{cov1}
[u]_{C^{\frac{sp}{p-1}-\e}(B_{\frac{1}{2}})}\leq C(Ar^{sp-tq}\mathcal{M}_r^{q-p}+1)^d,
\end{equation}
where $C=C(N,s,p,t,q,\e)>0$ and $d=d(N,s,p,\e)\geq 1$ are some constants. This is \eqref{mt-c1} for $\sigma=\frac{1}{2}$.
\vskip.2cm\noindent
{\bf Case 2: $s\,p> (p-1)$.} Fix $0<\e<1$ and let $\delta=1-\e$. Let $k_\infty\in\mathbb{N}\setminus\{0\}$ be such that 
$$
\frac{1+\theta_{k_\infty-1}\,\beta_{k_\infty-1}}{\beta_{k_\infty-1}}< 1\qquad \mbox{ and }\qquad \frac{1+\theta_{k_\infty}\,\beta_{k_\infty}}{\beta_{k_\infty}}\ge 1.
$$
Observe that such a choice is feasible, since 
\[
\lim_{k\to\infty} \frac{1+\theta_k\,\beta_k}{\beta_k}=\frac{s\,p}{p-1}>1.
\] 
Now choose $l_\infty\in\mathbb{N}\setminus\{0\}$ so that 
$$
\delta<1-\frac{N}{k_\infty+l_\infty},
$$
and let 
\[
\gamma=1-\gamma',\qquad \mbox{ for some } 0<\gamma'<1 \mbox{ such that } \delta<1-\gamma'-\frac{N}{k_\infty+l_\infty}.
\] 
Define also 
$$
h_0=\frac{1}{64\,(k_\infty+l_\infty)},\qquad R_i=\frac{7}{8}-4\,(2\,i+1)\,h_0=\frac{7}{8}-\frac{2\,i+1}{16\,(k_\infty+l_\infty)},\qquad \mbox{ for } i=0,\dots,k_\infty+l_\infty.
$$
We note that 
\[
R_0+4\,h_0=\frac{7}{8}\qquad \mbox{ and }\qquad R_{(k_\infty+l_\infty)-1}-4\,h_0=\frac{3}{4}.
\] 
By applying Proposition\footnote{Note that for $i\leq k_\infty$ we have $1+\theta_i\,\beta_i<\beta_i$, so that Proposition \ref{prop:improve2I} applies.} {\ref{prop:improve2I}} with
\[
R=R_i, \qquad \theta=\theta_i \qquad \mbox{ and }\qquad \beta=\beta_i,\quad \mbox{ for } i=0,\ldots,k_\infty-1,
\] 
and observing that $R_i-4\,h_0=R_{i+1}+4\,h_0$ and that
$$
\frac{1+s\,p+\theta_i\,\beta_i}{\beta_i+(p-1)}=\frac{1+\theta_{i+1}\,\beta_{i+1}}{\beta_{i+1}},
$$
we arrive as in {\bf Case 1} at
\begin{equation}\label{ncase1}
\sup_{0<|h|< {h_0}}\left\|\frac{\delta^2_h u}{|h|^{\gamma}}\right\|_{L^{\beta_{k_\infty}}(B_{R_{k_\infty}+4h_0})}\leq \sup_{0<|h|< {h_0}}\left\|\frac{\delta^2_h u}{|h|^{\frac{1}{\beta_{k_\infty}}+\theta_{k_\infty}}}\right\|_{L^{\beta_{k_\infty}}(B_{R_{k_\infty}+4h_0})}\leq C(Ar^{sp-tq}\mathcal{M}_r^{q-p}+1)^d,
\end{equation}
where we used that $\gamma<1\le \frac{1}{\beta_{k_\infty}}+\theta_{k_\infty}$. Here $C=C(N,s,p,t,q,\e)>0$ and $d=d(N,s,p,t,q,\e)\geq 1$ are some constants. We now apply Proposition {\ref{prop:improve2I}} with
\[
R=R_i, \qquad \beta=\beta_i\qquad \mbox{ and }\qquad \theta=\widetilde \theta_i=\gamma-\frac{1}{\beta_i} \qquad  \mbox{ for } i=k_\infty,\ldots,k_\infty+l_\infty-1.
\] 
Observe that by construction we have
$$
\frac{1+\widetilde \theta_i\,\beta_i}{\beta_i}=\gamma,\qquad \mbox{ for } i=k_\infty,\ldots,k_\infty+l_\infty-1,
$$
and using that $s\,p>(p-1)$
$$
\frac{1+s\,p+\widetilde \theta_i\, \beta_i}{\beta_i+p-1}>\frac{p+\widetilde \theta_i\, \beta_i}{\beta_i+p-1}=1+\frac{\beta_i\,(\gamma-1)}{\beta_i+p-1}>\gamma,\quad \mbox{ for } i=k_\infty,\ldots,k_\infty+l_\infty-1.
$$
We obtain the following chain of estimates for $i=k_\infty,\ldots,k_\infty+l_\infty-2$,
$$
\sup_{0<|h|<h_0}\left\|\dfrac{\delta^2_h u}{|h|^{\gamma}}\right\|_{L^{\beta_{i+1}}(B_{R_{i+1}+4h_0})}\leq C(Ar^{sp-tq}\mathcal{M}_r^{q-p}+1)^d\,\sup\limits_{0<|h|< h_0}\left(\left\|\dfrac{\delta^2_h u }{|h|^{\gamma}}\right\|_{L^{\beta_i}(B_{R_i+4h_0})}+1\right),\,
$$
and finally
$$
\sup_{0<|h|< {h_0}}\left\|\frac{\delta^2_h u}{|h|^{\gamma}}\right\|_{L^{\beta_{k_\infty+l_\infty}}(B_{\frac{3}{4}})}\leq C(Ar^{sp-tq}\mathcal{M}_r^{q-p}+1)^d\sup_{0<|h|< h_0}\left(\left\|\frac{\delta^2_h u }{|h|^{\gamma}}\right\|_{L^{\beta_{k_\infty+l_\infty-1}}(B_{R_{k_\infty+l_\infty-1}+4h_0})}+1\right),
$$
where $C=C(N,s,p,t,q,\e)>0$.
Hence, recalling that $\gamma=1-\gamma'$, we conclude
$$
\sup_{0<|h|< {h_0}}\left\|\frac{\delta^2_h u}{|h|^{1-\gamma'}}\right\|_{L^{\beta_{k_\infty+l_\infty}}(B_{\frac{3}{4}})}\leq C(Ar^{sp-tq}\mathcal{M}_r^{q-p}+1)^d,
$$
where we also used \eqref{ncase1}. Here $C=C(N,s,p,t,q,\e)>0$ and $d=d(N,s,p,\e)\geq 1$ are some constants. We are again in the position to repeat the arguments at the end of the proof of Theorem \ref{teo:1} ({see \eqref{Neq-c1} and \eqref{Neq2-c1}}) and use Theorem~\ref{emb2} with $\beta = 1-\gamma'$ , $q=\beta_{i_\infty+j_\infty}$ and $\alpha =\delta=1-\e$ to obtain  
\begin{equation}\label{cov2}
[u]_{C^{1-\e}(B_{\frac{1}{2}})}\leq C(Ar^{sp-tq}\mathcal{M}_r^{q-p}+1)^d,
\end{equation}
where $C=C(N,s,p,t,q,\e)>0$ and $d=d(N,s,p,\e)\geq 1$ are constants.
This is \eqref{mt-c1} for $\sigma=\frac{1}{2}$.
This concludes the proof.
\end{proof}

\section{Case II: $1<p\leq 2\leq q$}\label{sec:II}
Throughout this section, we assume that $1<p\leq 2\leq q$ and $s,t\in(0,1)$ satisfy $\frac{sp}{p-1}\geq \frac{tq}{q-1}$ unless stated otherwise.
\subsection{$t$-regularity using both the superquadratic and subquadratic schemes}
 In this subsection we present the following two versions of improved $t$-Besov regularity using either the superquadratic scheme or the subquadratic scheme.
\begin{prop}
\label{prop:improvesuper} (Improved $t$-Besov regularity using the superquadratic scheme) Assume $A>0$ and let $u\in X(B_2)$ be a local weak solution of \eqref{eq:maineqnew} in $B_2$  satisfying 
\begin{equation}
\label{eq2tsuper:bounds}
\|u\|_{L^\infty(B_1)}\leq 1\qquad \mbox{ and }\qquad  \int_{\mathbb{R}^N\setminus B_1} \frac{|u(y)|^{l-1}}{|y|^{N+m\,l}}\,  dy\leq 1,
\end{equation}
for $(l,m)\in {(p,s),(t,q)}$.
Then for any $q\leq\mu <\infty$, $0<h_0<\frac{1}{10}$ and $R$ such that $4\,h_0<R\le 1-5\,h_0$,  
we have
\begin{equation*}
\begin{split}
\sup_{0<|h|< h_0}\left\|\frac{\delta^2_h u}{|h|^{t}}\right\|_{L^{\mu+1}(B_{R-4\,h_0})}^{{\mu}+1}&\leq C\Big(A^{-1}+1\Big)\,\left(\sup_{0<|h|< h_0}\left\|\frac{\delta^2_h u }{|h|^t}\right\|_{L^{\mu}(B_{R+4h_0})}^{\mu}+1\right),
\end{split}
\end{equation*}
for some positive constant $C=C(N,s,p,t,q,\mu,h_0)$.
\end{prop}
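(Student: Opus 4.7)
The plan is to mirror the proof of Proposition \ref{prop:improve} with the roles of the two operators interchanged: now $(q,t)$ is the superquadratic operator (since $q \geq 2$) whose contribution drives the regularity improvement, while $(p,s)$ is subquadratic (since $1<p\leq 2$) and contributes only lower-order terms. Set $r = R - 4h_0$ and take a cutoff $\eta \in C_0^\infty(B_R)$ satisfying \eqref{eta1p} with $\eta^{q/2}$ Lipschitz. The differentiated identity \eqref{differentiated1} holds verbatim, since it is symmetric in the two operators. Test it with
\[
\varphi = J_{\beta+1}\!\left(\frac{\delta_h u}{|h|^\theta}\right) \eta^{q},
\]
with $\beta \geq 2$ and $1 + \theta \beta \geq 0$, and split the result as in \eqref{Il1} into $\sum_{(l,m,k)}(\mathcal{I}_{1,l,m,k} + \mathcal{I}_{2,l,m,k} + \mathcal{I}_{3,l,m,k}) = 0$.

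For the $(l,m,k) = (q,t,A)$ contribution, since $q \geq 2$ apply the BLS estimates of Proposition \ref{prop:improve} with $(q,t)$ replacing $(p,s)$ there: this yields a coercive lower bound
\[
\mathcal{I}_{1,q,t,A} \;\geq\; c\, A\left[\frac{|\delta_h u|^{(\beta-1)/q} \delta_h u}{|h|^{(1+\theta\beta)/q}}\,\eta\right]^{q}_{W^{t,q}(B_R)} - C\,A\,\mathcal{I}_{11,q,t,A} - C\,A\,\mathcal{I}_{12,q,t,A},
\]
with $\mathcal{I}_{11,q,t,A}$ and $\mathcal{I}_{12,q,t,A}$ controlled exactly as in \eqref{estI111}--\eqref{estI121} but with $(q,t)$ in place of $(p,s)$; the tails $\mathcal{I}_{2,q,t,A}, \mathcal{I}_{3,q,t,A}$ are handled as in \eqref{estI23p1}. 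For the $(l,m,k) = (p,s,1)$ contribution, since $1<p\leq 2$ apply the subquadratic GL scheme used for $\mathcal{I}_{j,q,t,A}$ in Proposition \ref{prop:improve}: estimates \eqref{estI1q1}, \eqref{estI2q1}, \eqref{estI3q1} give that $\mathcal{I}_{1,p,s,1}, \mathcal{I}_{2,p,s,1}, \mathcal{I}_{3,p,s,1}$ are bounded (up to a non-negative $W^{sp/2,2}$ coercive remainder that can be dropped) by a constant times $\int |\delta_h u|^{\gamma}/|h|^{1+\theta\beta}\,dx$ for $\gamma \in \{p+\beta-1, \beta+1, \beta\}$.

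Now divide the full identity by $A$ to isolate the coercive $t$-Besov seminorm. The $(q,t,A)$ side is clean, while the $(p,s,1)$ contributions come with the prefactor $A^{-1}$, producing the $A^{-1} + 1$ in the conclusion. Using $\|u\|_{L^\infty(B_1)} \leq 1$ and $p \leq q$, the three exponents $p+\beta-1, \beta+1, \beta$ can all be bounded in terms of $\int|\delta_h u|^{\beta}/|h|^{1+\theta\beta}\,dx$ up to constants. Then follow the BLS argument (analogue of \eqref{estIl1}$\to$\eqref{estIlfinal}) to pass from the $W^{t,q}$-seminorm of $|\delta_h u|^{(\beta-1)/q}\delta_h u /|h|^{(1+\theta\beta)/q}\,\eta$ on the left to second-order differential quotients of $u$:
\[
\sup_{0<|h|< h_0}\int_{B_r}\left|\frac{\delta^{2}_h u}{|h|^{(1+tq+\theta\beta)/(\beta-1+q)}}\right|^{\beta-1+q}\,dx \;\leq\; C(A^{-1}+1)\sup_{0<|h|< h_0}\left(\int_{B_R}\left|\frac{\delta_h u}{|h|^{(1+\theta\beta)/\beta}}\right|^{\beta \mu/(\mu-q+2)}dx + \Big\|\tfrac{\delta^{2}_h u}{|h|^t}\Big\|^{\mu}_{L^\mu(B_{R+4h_0})} + 1\right).
\]

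Finally, specialize $\beta = \mu - q + 2$ and $\theta = \frac{(\mu - q + 2)\,t - 1}{\mu - q + 2}$, so that $\beta - 1 + q = \mu + 1$ and the exponent $(1+tq+\theta\beta)/(\beta-1+q)$ simplifies to $t$, and $\beta\mu/(\mu-q+2) = \mu$, $(1+\theta\beta)/\beta = t$. Proceeding as in Step 5 of \cite[pp.~822--823]{BLS} to pass from first to second order differential quotients on the right-hand side (valid since $(1+\theta\beta)/\beta < 1$ after a harmless enlargement of $R$) yields the claimed bound. The main bookkeeping obstacle is tracking how the subquadratic $(p,s)$ contributions—which carry exponents slightly different from those arising naturally for the dominant $(q,t)$ operator—are absorbed into $\int|\delta_h u|^\beta/|h|^{1+\theta\beta}\,dx$; this is where the $L^\infty$ normalization \eqref{eq2tsuper:bounds} and the comparison $p \leq q$ are used decisively.
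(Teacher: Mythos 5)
Your proposal follows the paper's proof essentially line by line: same test function $\varphi=J_{\beta+1}(\delta_h u/|h|^\theta)\eta^q$, same split into $\mathcal{I}_{j,l,m,k}$, same assignment of the superquadratic BLS estimates to $(q,t,A)$ and the subquadratic GL estimates to $(p,s,1)$, same division by $A$ to isolate the coercive $W^{t,q}$ seminorm, and the same final choices $\beta=\mu-q+2$ and $\theta$ such that $(1+\theta\beta)/\beta=t$ followed by Step~5 of \cite{BLS}. One small slip: with those choices the exponent $(1+tq+\theta\beta)/(\beta-1+q)$ equals $t(\mu+2)/(\mu+1)$, not $t$; this is strictly larger than $t$, so for $|h|<1$ one still deduces the stated bound with $t$ in the denominator (which is precisely what the Step~5 argument does), but the claim that the exponent ``simplifies to $t$'' is not literally correct.
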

\begin{proof}
 Define 
$
r=R-4h_0$ and recall $\quad d\mu_{l,m}=\frac{dx dy}{|x-y|^{N+lm}}.$ Take $\varphi\in W^{s,p}(B_R)\cap W^{t,q}(B_R)$ vanishing outside $B_{\frac{R+r}{2}}$ and  $0<|h|<h_0$. Testing \eqref{wksol} with $\varphi$ and $\varphi_{-h}$ and performing a change of variable yields
\begin{equation}
\label{eq2tsuper:differentiated}
\frac1h \sum_{(l,m,k)\in\{(p,s,1),(q,t,A)\}}\int_{\mathbb{R}^N} \int_{\mathbb{R}^N}k \Big(J_l(u_h(x)-u_h(y))-J_l(u(x)-u(y))\Big)\,\Big(\varphi(x)-\varphi(y)\Big)\,d\mu_{l,m}=0.
\end{equation}
 Take  $\eta\in C^\infty_0(B_R)$  such that
\begin{equation}\label{eq2tsuper:etap}
0\le \eta\le 1\text{ in }B_R,\qquad\eta\equiv 1\text{ in }B_r,\qquad \eta\equiv 0 \mbox{ in } \mathbb{R}^N\setminus B_{\frac{R+r}{2}},\qquad |\nabla \eta|\le \frac{C}{R-r}=\frac{C}{4\,h_0},
\end{equation}
where $C=C(N)>0$.  Inserting 
\[
\varphi=J_{\beta+1}\left(\frac{\delta_h u}{|h|^{\theta}}\right)\,\eta^q,
\]
in \eqref{eq2tsuper:differentiated} with $\beta\geq 2$ and $1+\theta\beta\geq 0$,
we obtain
\begin{equation*}
\begin{split}
&\sum_{(l,m,k)\in\{(p,s,1),(q,t,A)\}}\iint_{\mathbb{R}^N\times\mathbb{R}^N}k \frac{\Big(J_l(u_h(x)-u_h(y))-J_l(u(x)-u(y))\Big)}{|h|^{1+\theta\beta}}\\
&\times\Big(J_{\beta+1}(u_h(x)-u(x))\,\eta(x)^q-J_{\beta+1}(u_h(y)-u(y))\,\eta(y)^q\Big)\,d\mu_{l,m}=0,
\end{split}
\end{equation*}
that is
\begin{equation}
\label{eq2tsuper:Il}
\sum_{(l,m,k)\in\{(p,s,1),(q,t,A)\}}k(\mathcal{I}_{1,l,m}+\mathcal{I}_{2,l,m}+\mathcal{I}_{3,l,m})=0,
\end{equation}
where
\[
\begin{split}
\mathcal{I}_{1,l,m,k}:=\iint_{B_R\times B_R}&k \frac{\Big(J_l(u_h(x)-u_h(y))-J_l(u(x)-u(y))\Big)}{|h|^{1+\theta\beta}}\\
&\times\Big(J_{\beta+1}(u_h(x)-u(x))\,\eta(x)^q-J_{\beta+1}(u_h(y)-u(y))\,\eta(y)^q\Big)d\mu_{l,m},
\end{split}
\]
\[
\begin{split}
\mathcal{I}_{2,l,m,k}:=\iint_{B_\frac{R+r}{2}\times (\mathbb{R}^N\setminus B_R)}& k\frac{\Big(J_l(u_h(x)-u_h(y))-J_l(u(x)-u(y))\Big)}{|h|^{1+\theta\,\beta}}\\
&\times J_{\beta+1}(u_h(x)-u(x))\,\eta(x)^q\,d\mu_{l,m},
\end{split}
\]
and
\[
\begin{split}
\mathcal{I}_{3,l,m,k}:=-\iint_{(\mathbb{R}^N\setminus B_R)\times B_\frac{R+r}{2}}& k\frac{\Big(J_l(u_h(x)-u_h(y))-J_l(u(x)-u(y))\Big)}{|h|^{1+\theta\,\beta}}\\
&\times J_{\beta+1}(u_h(y)-u(y))\,\eta(y)^q\,d\mu_{l,m},
\end{split}
\]
where we used that $\eta$ vanishes identically outside $B_{\frac{R+r}{2}}$.

We estimate $\mathcal{I}_{j,l,m,k}$ for $j=1,2,3$ and $(l,m,k)\in\{(p,s,1),(q,t,A)\}$ separately. First we estimate for $(l,m,k)=(p,s,1)$. We remark that $\eta^q=(\eta^\frac{q}{2})^2$ and that $\eta^\frac{q}{2}$ is Lipschitz and enjoys the same properties as $\eta$, including \eqref{eq2tsuper:etap}. Since $p\leq 2$, taking into account \eqref{eq2tsuper:bounds}, we  may proceed as in the estimates of (3.15), (3.17) and (3.18) in \cite{GL} to obtain
\begin{equation}\label{eq2tsuper:estI1q}
\begin{split}
\mathcal{I}_{1,p,s,1}
&\geq
-C\int_{B_R} \frac{|\delta_h u(x)|^{p+\beta-1}}{|h|^{1+\theta\,\beta}} dx\geq -C\int_{B_R} \frac{|\delta_h u(x)|^{\beta}}{|h|^{1+\theta\,\beta}} dx,
\end{split}
\end{equation}
since $|u|\leq 1$ in $B_1$.
Here $C=C(N,s,p,\beta,h_0)>0$. Further, we get
\begin{equation}
\label{eq2tsuper:estI2q}
|\mathcal{I}_{2,p,s,1}|\leq C\int_{B_R}\frac{|\delta_h u(x)|^{\beta}}{|h|^{1+\theta\,\beta}} dx,
\end{equation}
and
\begin{equation}
\label{eq2tsuper:estI3q}
|\mathcal{I}_{3,p,s,1}|\leq C\int_{B_R}\frac{|\delta_h u(x)|^{\beta}}{|h|^{1+\theta\,\beta}} dx.
\end{equation}
Here $C=C(N,s,p,h_0)>0$ is a constant.

Next, we estimate $\mathcal{I}_{j,l,m,A}$ for $j=1,2,3$ and $(l,m)=(q,t)$. Again, since $q\geq 2$, taking into account the properties of $\eta$ and proceeding along the lines of the proof of the estimates of $\mathcal{I}_1, \mathcal{I}_{11}$ and $\mathcal{I}_{12}$ from pages 814-819 in \cite{BLS}, we get
\begin{equation}\label{eq2tsuper:estI1p}
\begin{split}
\mathcal{I}_{1,q,t,A}\ge cA& \left[\frac{|\delta_h u|^\frac{\beta-1}{q}\,\delta_h u}{|h|^\frac{1+\theta\,\beta}{q}}\,\eta\right]^q_{W^{t,q}(B_R)}-CA\,\mathcal{I}_{11}-CA\,\mathcal{I}_{12}
\end{split}
\end{equation}
where $c=c(q,\beta)>0$ and $C=C(q,\beta)>0$, where we set 
\begin{equation*}
\begin{split}
\mathcal{I}_{11}:=\,\iint_{B_R\times B_R} &\left(|u_h(x)-u_h(y)|^\frac{q-2}{2}+|u(x)-u(y)|^\frac{q-2}{2}\right)^2\\
&\times \left|\eta(x)^\frac{q}{2}-\eta(y)^\frac{q}{2}\right|^2\, \frac{|\delta_h u(x)|^{\beta+1}+|\delta_h u(y)|^{\beta+1}}{|h|^{1+\theta\,\beta}}\,d\mu_{q,t},
\end{split}
\end{equation*}
and
\[
\begin{split}
\mathcal{I}_{12}&:=\,\iint_{B_R\times B_R}\, \left(\frac{|\delta_h u(x)|^{\beta-1+q}}{|h|^{1+\theta\,\beta}}+\frac{|\delta_h u(y)|^{\beta-1+q}}{|h|^{1+\theta\,\beta}}\right)\, |\eta(x)-\eta(y)|^q\,d\mu_{q,t}
\end{split}
\]
with the estimates
\begin{equation}\label{eq2tsuper:estI11}
|\mathcal{I}_{11}|\leq C\,\left(\int_{B_R}\frac{|\delta_h u(x)|^{\frac{\beta\,\mu}{\mu-q+2}}}{|h|^{(1+\theta\,\beta)\frac{\mu}{\mu-q+2}}}\, dx +\sup_{0<|h|< h_0}\left\|\frac{\delta^2_h u}{|h|^t}\right\|_{L^{\mu}(B_{R+4h_0})}^{\mu}+1\right),
\end{equation}
where $C=C(N,t,q,\mu,h_0)>0$ and
\begin{equation}\label{eq2tsuper:estI12}
|\mathcal{I}_{12}|\leq C\left( \int_{B_R} \frac{|\delta_h u|^{\frac{\beta \mu}{\mu-q+2}}}{|h|^{(1+\theta\,\beta)\,\frac{\mu}{\mu-q+2}}}\,dx+1\right),\qquad\mbox{ with } C=C(N,t,q,\mu,h_0)>0.
\end{equation}
Now exactly as in the estimate (4.9) in \cite{BLS}, we obtain
\begin{equation}\label{eq2tsuper:estI23p}
\begin{split}
(|\mathcal{I}_{2,q,t,A}|+|\mathcal{I}_{3,q,t}|)&\leq CA\,\int_{B_{\frac{R+r}{2}}}\frac{|\delta_h u|^{\beta}}{|h|^{1+\theta\beta}} dx\leq CA\,\left(1+\int_{B_{R}}\left|\frac{\delta_h u}{|h|^{\frac{1+\theta\beta}{\beta}}}\right|^{\frac{\beta\,\mu}{\mu-q+2}}\,dx\right)
\end{split}
\end{equation}
where $C=C(N,t,q,h_0)>0$. By inserting the estimates \eqref{eq2tsuper:estI1q}, \eqref{eq2tsuper:estI2q}, \eqref{eq2tsuper:estI3q}, \eqref{eq2tsuper:estI1p}, \eqref{eq2tsuper:estI11}, \eqref{eq2tsuper:estI12} and \eqref{eq2tsuper:estI23p} in \eqref{eq2tsuper:Il}, we obtain
\begin{equation}\label{eq2tsuper:estIl}
\begin{split}
&
\left[\frac{|\delta_h u|^\frac{\beta-1}{q}\,\delta_h u}{|h|^\frac{1+\theta\,\beta}{q}}\,\eta\right]^q_{W^{t,q}(B_R)}\\
&
\leq  \frac{C}{A}\int_{B_R}\frac{|\delta_h u(x)|^{\beta}}{|h|^{1+\theta\,\beta}}dx+C\,\left(\int_{B_{R}}\left|\frac{\delta_h u}{|h|^{\frac{1+\theta\beta}{\beta}}}\right|^{\frac{\beta\, \mu}{\mu-q+2}}\,dx+\sup_{0<|h|< h_0}\left\|\frac{\delta^2_h u}{|h|^t}\right\|_{L^\mu(B_{R+4h_0})}^{\mu}+1\right)\\
&\leq C\Big(A^{-1}+1\Big)\int_{B_{R}}\left|\frac{\delta_h u}{|h|^{\frac{1+\theta\beta}{\beta}}}\right|^{\frac{\beta\, \mu}{\mu-q+2}}\,dx+C\left(\sup_{0<|h|< h_0}\left\|\frac{\delta^2_h u}{|h|^t}\right\|_{L^\mu(B_{R+4h_0})}^{\mu}+1\right)
\end{split}
\end{equation}
where $C=C(N,s,p,t,q,\beta,\mu,h_0)>0$. Here we also used Young's inequality. Now, we estimate the term in the L.H.S. of \eqref{eq2tsuper:estIl} exactly as in \cite[estimate of (4.15), page 822]{BLS} and combine the resulting estimate in \eqref{eq2tsuper:estIl} above to obtain
\begin{equation*}
\begin{split}
&\sup_{0<|h|< h_0}\int_{B_r}\left|\frac{\delta^2_h u}{|h|^\frac{1+tq+\theta\,\beta}{\beta-1+q}}\right|^{\beta-1+q}\,dx\\
&\leq C\Big(A^{-1}+1\Big)\,\int_{B_{R}}\left|\frac{\delta_h u}{|h|^{\frac{1+\theta\beta}{\beta}}}\right|^{\frac{\beta\,\mu}{\mu-q+2}}\,dx+C\left(\sup_{0<|h|< h_0}\left\|\frac{\delta^2_h u}{|h|^t}\right\|_{L^{\mu}(B_{R+4h_0})}^{\mu}+1\right),
\end{split}
\end{equation*}
for $C=C(N,s,p,t,q,\beta,\mu,h_0)>0$, where the last inequality above follows by using that $|u|\leq 1$ in $B_1$ and $q\geq 2$. 

Next, we choose $\theta$ so that $\frac{1+\theta\,\beta}{\beta}=t< 1$ and proceed along the lines of the proof of \cite[estimate of (4.17), page 822]{BLS} to obtain
\begin{equation*}
\begin{split}
&\sup_{0<|h|< h_0}\int_{B_r}\left|\frac{\delta^2_h u}{|h|^\frac{ 1+t\,q+\theta\,\beta }{\beta-1+q}}\right|^{\beta-1+q}\,dx\\
&\leq C\Big(A^{-1}+1\Big)\,\sup_{0<|h|< h_0}\left\|\frac{\delta^2_h u }{|h|^t}\right\|_{L^\frac{\beta\mu}{\mu-q+2}(B_{R+4h_0})}^\frac{\beta\mu}{\mu-q+2}+C\left(\sup_{0<|h|< h_0}\left\|\frac{\delta^2_h u}{|h|^t}\right\|_{L^{\mu}(B_{R+4\,h_0})}^{\mu}+ 1\right),
\end{split}
\end{equation*}
for some constant $C=C(N,s,p,t,q,\beta,\mu,h_0)>0$. Now, as in Step 5 of \cite{BLS} choosing $\beta=\mu-q+2$ and further arguing exactly same as in Step 5 of \cite{BLS}, we obtain
$$
\sup_{0<|h|< h_0}\left\|\frac{\delta^2_h u}{|h|^{t}}\right\|_{L^{\mu+1}(B_{R-4\,h_0})}^{\mu+1}\leq C\Big(A^{-1}+1\Big)\,\left(\sup_{0<|h|< h_0}\left\|\frac{\delta^2_h u }{|h|^t}\right\|_{L^{\mu}(B_{R+4h_0})}^{\mu}+1\right),
$$
for $C=C(N,s,p,t,q,\mu,h_0)>0$.
\end{proof}

\begin{prop}
\label{prop:improvesub} (Improved $t$-Besov regularity using the subquadratic scheme)
Assume that $sp\geq tq$ and $u\in X(B_2)$ is a local weak solution of equation \eqref{eq:maineqnew} in $B_2$  such that
\begin{equation}
\label{eq2tsub:bounds}
\|u\|_{L^\infty(B_1)}\leq 1\qquad \mbox{ and }\qquad  \int_{\mathbb{R}^N\setminus B_1} \frac{|u(y)|^{l-1}}{|y|^{N+m\,l}}\,  dy\leq 1,
\end{equation} 
for $(l,m)\in \{(p,s),(t,q)\}$. 
Then for any $q\leq\mu <\infty$, $0<h_0<\frac{1}{10}$, we have $R$ such that $4\,h_0<R\le 1-5\,h_0$,  
we have 
\begin{equation*}
\begin{split}
\sup_{0<|h|< h_0}\left\|\frac{\delta^2_h u}{|h|^t}\right\|_{L^{\mu+2}(B_{R-4\,h_0})}^{{\mu}+2}
\leq C\Big({A}+1\Big)\,\left(\sup_{0<|h|< h_0}\left\|\frac{\delta^2_h u }{|h|^t}\right\|_{L^{\mu}(B_{R+4h_0})}^{\mu}+1\right),
\end{split}
\end{equation*}
for some positive constant $C=C(N,s,p,t,q,\mu,h_0)$.
\end{prop}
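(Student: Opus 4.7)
The plan is to mirror the proof of Proposition~\ref{prop:improvesuper}, but with the roles of the two operators exchanged: the positive driving term will now be extracted from the $p$-part (using $p\le 2$) via the subquadratic algebraic inequalities of \cite{GL}, while the $q$-part will contribute only upper error estimates in the BLS style. This arrangement is what should yield the characteristic $+2$ integrability gain of the subquadratic iteration. The prefactor $A+1$ will appear because $A$ multiplies the $q$-part error terms, whereas the positive $p$-part lower bound carries no $A$---dual to Proposition~\ref{prop:improvesuper}, where the reverse pattern produced the $A^{-1}+1$ prefactor.

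First, I would set $r=R-4h_0$ and test \eqref{wksol} with $\varphi=J_{\beta+1}\bigl(\delta_h u/|h|^\theta\bigr)\,\eta^q$ for $\beta\ge 2$, $1+\theta\beta\ge 0$, and a cutoff $\eta$ satisfying \eqref{eq2tsuper:etap}. The usual difference-quotient manipulation will yield the identity
\[
\sum_{(l,m,k)\in\{(p,s,1),(q,t,A)\}}\bigl(\mathcal{I}_{1,l,m,k}+\mathcal{I}_{2,l,m,k}+\mathcal{I}_{3,l,m,k}\bigr)=0,
\]
with the $\mathcal{I}_{j,l,m,k}$ defined exactly as in the proof of Proposition~\ref{prop:improvesuper}. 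For the $(p,s,1)$ part, since $p\le 2$, I would apply the subquadratic algebraic estimates from \cite{GL}---adapted to $\eta^{q/2}$ in place of $\eta^{p/2}$, a substitution that is legitimate because $\eta^{q/2}$ is still Lipschitz with the same gradient bound since $q\ge 2$---to obtain a lower bound of the form
\[
\mathcal{I}_{1,p,s,1}\ge c\left[\frac{|\delta_h u|^{\frac{\beta-1}{2}}\delta_h u}{|h|^{\frac{1+\theta\beta}{2}}}\eta^{q/2}\right]^2_{W^{sp/2,2}(B_R)}-C\int_{B_R}\frac{|\delta_h u|^{\beta+1}+|\delta_h u|^{p+\beta-1}}{|h|^{1+\theta\beta}}\,dx,
\]
together with $|\mathcal{I}_{2,p,s,1}|+|\mathcal{I}_{3,p,s,1}|\le C\int_{B_R}|\delta_h u|^{\beta}/|h|^{1+\theta\beta}\,dx$ obtained from \eqref{eq2tsub:bounds}. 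For the $(q,t,A)$ part (where $q\ge 2$), I would estimate $\mathcal{I}_{1,q,t,A},\,\mathcal{I}_{2,q,t,A},\,\mathcal{I}_{3,q,t,A}$ from above using the superquadratic BLS scheme, producing $A\,\mathcal{I}_{11}$ and $A\,\mathcal{I}_{12}$ terms controlled analogously to \eqref{eq2tsuper:estI11}--\eqref{eq2tsuper:estI12}, with the quantity $\sup_{0<|h|<h_0}\|\delta_h^2 u/|h|^t\|_{L^\mu}^{\mu}$ entering naturally on the right-hand side.

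Next, invoking the hypothesis $sp\ge tq$, the $W^{sp/2,2}$ seminorm in the main term dominates (up to a harmless factor depending on $R\le 1$) the $W^{tq/2,2}$ seminorm, bringing the driving quantity into the $t$-scale. Choosing $\theta$ so that $(1+\theta\beta)/\beta=t$ and applying the subquadratic analogue of Step~5 in \cite{BLS}, the resulting $W^{tq/2,2}$-control on $|\delta_h u|^{(\beta-1)/2}\delta_h u$ will convert into an $L^{\beta+2}$-type bound on $\delta_h^2 u/|h|^t$ after the standard first-to-second-order difference-quotient conversion via \cite[Lemma~2.6]{BLS}. An appropriate choice of $\beta$ (depending on $\mu$ and $q$) then delivers the target exponent $\mu+2$ on the left-hand side, and Young's inequality absorbs the remaining error terms into the right-hand side with coefficient $C(A+1)$.

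The main obstacle I anticipate is the precise bookkeeping of the $A$-dependence throughout the Young's-inequality absorption: because the positive term lacks $A$ while all $q$-part error terms are multiplied by $A$, one must match the Young exponents carefully to obtain the clean prefactor $A+1$ rather than, e.g., $A^2+1$. A secondary subtlety is verifying that the $\eta^{q/2}$-cutoff does not interfere with the \cite{GL} algebraic identity; this should go through unchanged because $\eta^{q/2}\in C^{0,1}$ with the same uniform bounds as the $\eta^{p/2}$ used in the original argument.
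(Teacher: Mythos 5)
Your overall architecture is right and mirrors the paper's own proof: test with $J_{\beta+1}(\delta_h u/|h|^\theta)\eta^q$, extract the positive driving term from the $(p,s,1)$-part via the subquadratic \cite{GL} inequalities with $2\sigma=sp$ (the $\eta^{q/2}$ substitution is indeed harmless since it remains Lipschitz with the same bounds), estimate the $(q,t,A)$-part from above in the superquadratic \cite{BLS} style to produce the $A\,\mathcal{I}_{11}$ and $A\,\mathcal{I}_{12}$ errors, and match the error terms by taking $\beta=\mu-q+2$ and $(1+\theta\beta)/\beta=t$. The $A+1$ bookkeeping also works out as you describe.

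However, your final step contains a genuine gap. The seminorm-to-Lebesgue conversion (GL's analogue of BLS Step~5) applied to $\bigl[|\delta_h u|^{(\beta-1)/2}\delta_h u\,/\,|h|^{(1+\theta\beta)/2}\,\eta^{q/2}\bigr]^2_{W^{\sigma,2}(B_R)}$ yields an $L^{\beta+1}$-bound, \emph{not} an $L^{\beta+2}$-bound, on a second difference: with $\beta=\mu-q+2$ one lands on
\[
\sup_{0<|h|<h_0}\left\|\frac{\delta^2_h u}{|h|^{\frac{t(\mu+2)}{\mu-q+3}}}\right\|^{\mu-q+3}_{L^{\mu-q+3}(B_r)}\leq C(A+1)\left(\sup_{0<|h|<h_0}\left\|\frac{\delta^2_h u}{|h|^t}\right\|^\mu_{L^\mu(B_{R+4h_0})}+1\right),
\]
where $sp\geq tq$ enters precisely to guarantee $\frac{sp+t\beta}{\beta+1}\geq \frac{t(\mu+2)}{\mu-q+3}$ so that the differentiability exponent can be lowered to $\frac{t(\mu+2)}{\mu-q+3}$. (Your alternative route of lowering $W^{sp/2,2}$ to $W^{tq/2,2}$ on the bounded ball gives the same exponent and is an equivalent use of $sp\geq tq$, but it does not change the Lebesgue power.) Since $\mu-q+3<\mu+2$ whenever $q>1$, you are still short of the claimed $L^{\mu+2}$. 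The missing ingredient is the elementary but essential upgrade via the bound $\|u\|_{L^\infty(B_1)}\leq 1$: on $B_r$ one has $|\delta^2_h u|\leq 3$, so $|\delta^2_h u|^{\mu+2}\leq 3^{q-1}|\delta^2_h u|^{\mu-q+3}$ and hence
\[
\left|\frac{\delta^2_h u}{|h|^{t}}\right|^{\mu+2}=\frac{|\delta^2_h u|^{\mu+2}}{|h|^{t(\mu+2)}}\leq 3^{q-1}\frac{|\delta^2_h u|^{\mu-q+3}}{|h|^{t(\mu+2)}}=3^{q-1}\left|\frac{\delta^2_h u}{|h|^{\frac{t(\mu+2)}{\mu-q+3}}}\right|^{\mu-q+3}.
\]
Integrating this over $B_r$ and combining with the previous display produces the stated $L^{\mu+2}$-bound at scale $|h|^t$. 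Without this bootstrap the characteristic $+2$ integrability gain does not materialize: it is the product of a $+1$ gain from the seminorm conversion \emph{and} a $+(q-1)$ gain from the $L^\infty$-normalization, not a direct consequence of a ``subquadratic analogue of Step~5''.
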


\begin{proof} The setup and the starting point is the same as in the proof of Proposition \ref{prop:improvesuper}. We also arrive at
\begin{equation*}
\begin{split}
&\sum_{(l,m,k)\in\{(p,s,1),(q,t,A)\}}k\iint_{\mathbb{R}^N\times\mathbb{R}^N}  \frac{\Big(J_l(u_h(x)-u_h(y))-J_l(u(x)-u(y))\Big)}{|h|^{1+\theta\beta}}\\
&\times\Big(J_{\beta+1}(u_h(x)-u(x))\,\eta(x)^q-J_{\beta+1}(u_h(y)-u(y))\,\eta(y)^q\Big)\,d\mu_{l,m}=0,
\end{split}
\end{equation*}
that is 
\begin{equation}
\label{eq2tsub:Il}
\sum_{(l,m,k)\in\{(p,s,1),(q,t,A)\}}(\mathcal{I}_{1,l,m,k}+\mathcal{I}_{2,l,m,k}+\mathcal{I}_{3,l,m,k})=0,
\end{equation}
where
\[
\begin{split}
\mathcal{I}_{1,l,m,k}:=k\iint_{B_R\times B_R}& \frac{\Big(J_l(u_h(x)-u_h(y))-J_l(u(x)-u(y))\Big)}{|h|^{1+\theta\beta}}\\
&\times\Big(J_{\beta+1}(u_h(x)-u(x))\,\eta(x)^q-J_{\beta+1}(u_h(y)-u(y))\,\eta(y)^q\Big)d\mu_{l,m},
\end{split}
\]
\[
\begin{split}
\mathcal{I}_{2,l,m,k}:=k\iint_{B_\frac{R+r}{2}\times (\mathbb{R}^N\setminus B_R)}& \frac{\Big(J_l(u_h(x)-u_h(y))-J_l(u(x)-u(y))\Big)}{|h|^{1+\theta\,\beta}}\\
&\times J_{\beta+1}(u_h(x)-u(x))\,\eta(x)^q\,d\mu_{l,m},
\end{split}
\]
and
\[
\begin{split}
\mathcal{I}_{3,l,m,k}:=-k\iint_{(\mathbb{R}^N\setminus B_R)\times B_\frac{R+r}{2}}& \frac{\Big(J_l(u_h(x)-u_h(y))-J_l(u(x)-u(y))\Big)}{|h|^{1+\theta\,\beta}}\\
&\times J_{\beta+1}(u_h(y)-u(y))\,\eta(y)^q\,d\mu_{l,m},
\end{split}
\]
where we used that $\eta$ vanishes identically outside $B_{\frac{R+r}{2}}$.

We estimate $\mathcal{I}_{j,l,m,k}$ for $j=1,2,3$ and $(l,m)\in\{(p,s,1),(q,t,A)\}$ separately, beginning with the case $(l,m)=(p,s)$. To this end, we remark that $\eta^q=(\eta^\frac{q}{2})^2$ and that $\eta^\frac{q}{2}$ is Lipschitz and enjoys the same properties as $\eta$. Since $p\leq 2$, and \eqref{eq2tsub:bounds} holds, we may proceed along the lines of the proof of the estimates of $\mathcal{I}_1, \mathcal{I}_2$ and $\mathcal{I}_3$ in \cite{GL} (see pages 5767-5768 and estimates (3.17), (3.18)) and obtain for $2\sigma=sp$ that  
\begin{equation}\label{eq2tsub:estI1q}
\begin{split}
\mathcal{I}_{1,p,s,1}
&\geq {c}\left[\frac{|\delta_h u|^\frac{\beta-1}{2}\,\delta_h u}{|h|^\frac{1+\theta\,\beta}{2}}\,\eta^{\frac{q}{2}}\right]^2_{W^{\sigma,2}(B_R)}-C\int_{B_R} \frac{|\delta_h u(x)|^{p+\beta-1}}{|h|^{1+\theta\,\beta}} dx\\
& -C\,\int_{B_R}\, \frac{|\delta_h u(x)|^{\beta+1}}{|h|^{1+\theta\,\beta}}dx.
\end{split}
\end{equation}
Here $c=c(p,\beta)>0$ and $C=C(N,s,p,\beta,h_0)>0$. In addition, 
\begin{equation}
\label{eq2tsub:estI2q}
|\mathcal{I}_{2,p,s,1}|\leq C\int_{B_R}\frac{|\delta_h u(x)|^{\beta}}{|h|^{1+\theta\,\beta}} dx\leq C\left( \int_{B_R} \frac{|\delta_h u|^{\frac{\beta \mu}{\mu-q+2}}}{|h|^{(1+\theta\,\beta)\,\frac{\mu}{\mu-q+2}}}\,dx+1\right),
\end{equation}
and
\begin{equation}
\label{eq2tsub:estI3q}
|\mathcal{I}_{3,p,s,1}|\leq C\int_{B_R}\frac{|\delta_h u(x)|^{\beta}}{|h|^{1+\theta\,\beta}}\leq  C\left( \int_{B_R} \frac{|\delta_h u|^{\frac{\beta \mu}{\mu-q+2}}}{|h|^{(1+\theta\,\beta)\,\frac{\mu}{\mu-q+2}}}\,dx+1\right) dx.
\end{equation}
Here  $C=C(N,s,p,h_0)>0$ is a constant.

Next, we estimate $\mathcal{I}_{j,l,m,A}$ for $j=1,2,3$ and $(l,m)=(q,t)$. Again, since $q\geq 2$, we proceed as in Proposition \ref{prop:improvesuper} and obtain
\begin{equation}\label{eq2tsub:estI1p}
\begin{split}
\mathcal{I}_{1,q,t,A}\ge cA& \left[\frac{|\delta_h u|^\frac{\beta-1}{q}\,\delta_h u}{|h|^\frac{1+\theta\,\beta}{q}}\,\eta\right]^q_{W^{t,q}(B_R)}-CA\,\mathcal{I}_{11}-CA\,\mathcal{I}_{12}
\end{split}
\end{equation}
where $c=c(q,\beta)>0$, $C=C(q,\beta)>0$. In addition, 

\begin{equation}\label{eq2tsub:estI11}
|\mathcal{I}_{11}|\leq C\,\left(\int_{B_R}\frac{|\delta_h u(x)|^{\frac{\beta\,\mu}{\mu-q+2}}}{|h|^{(1+\theta\,\beta)\frac{\mu}{\mu-q+2}}}\, dx +\sup_{0<|h|< h_0}\left\|\frac{\delta^2_h u}{|h|^t}\right\|_{L^{\mu}(B_{R+4h_0})}^{\mu}+1\right),
\end{equation}
where $C=C(N,t,q,\mu,h_0)>0$ and
\begin{equation}\label{eq2tsub:estI12}
|\mathcal{I}_{12}|\leq C\left( \int_{B_R} \frac{|\delta_h u|^{\frac{\beta \mu}{\mu-q+2}}}{|h|^{(1+\theta\,\beta)\,\frac{\mu}{\mu-q+2}}}\,dx+1\right),\qquad\mbox{ with } C=C(N,t,q,\mu,h_0).
\end{equation}
Again, we also have
\begin{equation}\label{eq2tsub:estI23p}
\begin{split}
(|\mathcal{I}_{2,q,t,A}|+|\mathcal{I}_{3,q,t,A}|)&\leq  CA\,\left(1+\int_{B_{R}}\left|\frac{\delta_h u}{|h|^{\frac{1+\theta\beta}{\beta}}}\right|^{\frac{\beta\,\mu}{\mu-q+2}}\,dx\right),
\end{split}
\end{equation}
where $C=C(N,s,p,t,q,h_0)>0$.

 By inserting the estimates \eqref{eq2tsub:estI1q}, \eqref{eq2tsub:estI2q}, \eqref{eq2tsub:estI3q}, \eqref{eq2tsub:estI1p}, \eqref{eq2tsub:estI11}, \eqref{eq2tsub:estI12} and \eqref{eq2tsub:estI23p} in \eqref{eq2tsub:Il}, we obtain
\begin{equation}\label{eq2tsub:estIl}
\begin{split}
\left[\frac{|\delta_h u|^\frac{\beta-1}{2}\,\delta_h u}{|h|^\frac{1+\theta\,\beta}{2}}\,\eta^\frac{q}{2}\right]^2_{W^{\sigma,2}(B_R)}&\leq  C(A+1)\,\left(1+\int_{B_{R}}\left|\frac{\delta_h u}{|h|^{\frac{1+\theta\beta}{\beta}}}\right|^{\frac{\beta\,\mu}{\mu-q+2}}\,dx+\sup_{0<|h|< h_0}\left\|\frac{\delta^2_h u}{|h|^t}\right\|_{L^{\mu}(B_{R+4h_0})}^{\mu}\right)
\end{split}
\end{equation}
where $C=C(N,s,p,t,q,\beta,\mu,h_0)>0$. Here we have used that $p\geq 1$, $|u|\leq 1$ in $B_1$ and Young's inequality to estimate the terms coming from \eqref{eq2tsub:estI1q}. Now, we estimate the term in the L.H.S. of \eqref{eq2tsub:estIl} exactly as in \cite[estimate of (3.25), page 5770]{GL} and combine the resulting estimate in \eqref{eq2tsub:estIl} to obtain
\begin{equation*}
\begin{split}
\sup_{0<|h|< h_0}\int_{B_r}\left|\frac{\delta^2_h u}{|h|^\frac{sp+1+\theta\beta}{\beta+1}}\right|^{\beta+1}\,dx&\leq C(A+1)\,\left(1+\int_{B_{R}}\left|\frac{\delta_h u}{|h|^{\frac{1+\theta\beta}{\beta}}}\right|^{\frac{\beta\,\mu}{\mu-q+2}}\,dx+\sup_{0<|h|< h_0}\left\|\frac{\delta^2_h u}{|h|^t}\right\|_{L^{\mu}(B_{R+4h_0})}^{\mu}\right)
\end{split}
\end{equation*}
for $ C=C(N,s,p,t,q,\beta,\mu,h_0)>0 $, where the last inequality above follows by using that $|u|\leq 1$ in $B_1$ and $q\geq 2$. 

Estimating first order differential quotients with second order ones and making the choices
$$
t=\frac{1+\theta \beta}{\beta}, \quad \beta = \mu-q+2
$$ 
and using that $sp+1+\theta\beta = sp+\beta t\geq (\beta+q)t$, since $sp\geq tq$, we obtain
\begin{equation}\label{eq2tsub:estIlfinaltmu}
\begin{split}
\sup_{0<|h|< h_0}\int_{B_r}\left|\frac{\delta^2_h u}{|h|^\frac{t(\mu+2)}{\mu-q+3}}\right|^{\mu-q+3}\,dx&\leq C(A+1)\,\left(1+\sup_{0<|h|< h_0}\left\|\frac{\delta^2_h u}{|h|^t}\right\|_{L^{\mu}(B_{R+4h_0})}^{\mu}\right),
\end{split}
\end{equation}
where $C=C(N,s,p,t,q,\mu,h_0)>0$. Since $|u|\leq 1$ in $B_1$ and $q\geq 2$, we may also conclude
\[
\begin{split}
\sup_{0<|h|< h_0}\int_{B_r}\left|\frac{\delta^2_h u}{|h|^{t}}\right|^{\mu+2} &= \sup_{0<|h|< h_0}\int_{B_r}\frac{|\delta^2_h u|^{\mu+2}}{|h|^{t(\mu+2)}}\\
&\leq  3^{q-1} \sup_{0<|h|< h_0}\int_{B_r}\frac{|\delta^2_h u|^{\mu-q+3}}{|h|^{t(\mu+2)}}\\
&=3^{q-1}\sup_{0<|h|< h_0}\int_{B_r}\left|\frac{\delta^2_h u}{|h|^\frac{t(\mu+2)}{\mu-q+3}}\right|^{\mu-q+3}.
\end{split}
\]
Therefore, using the above estimate in \eqref{eq2tsub:estIlfinaltmu}, we obtain
\begin{equation}\label{eq2tsub:estIlfinalfinal}
\begin{split}
\sup_{0<|h|< h_0}\int_{B_r}\left|\frac{\delta^2_h u}{|h|^t}\right|^{\mu+2}\,dx&\leq C(A+1)\,\left(1+\sup_{0<|h|< h_0}\left\|\frac{\delta^2_h u}{|h|^t}\right\|_{L^{\mu}(B_{R+4h_0})}^{\mu}\right),
\end{split}
\end{equation}
where $C=C(N,s,p,t,q,\mu,h_0)>0$, which is the desired result.
\end{proof}

We will now conclude the almost $t$-regularity with a slight difference in how the estimate depends on the constant $A$ in the two cases $sp\geq tq$ and $sp\leq tq$.  The proof is almost identical with the proof of \cite[Theorem 4.2]{BLS}. We provide the details for completeness. 
\begin{Theorem}[Almost $C^t$-regularity]
\label{teo:almostt}
Let $\Omega\subset\mathbb{R}^N$ be a bounded and open set and $A>0$. Suppose $u\in X(\Omega)$ is a local weak solution of equation \eqref{eq:maineqnew} in $\Omega$. 
Then $u\in C^\delta_{\rm loc}(\Omega)$ for every $0<\delta<t$. 
\par
 In particular, for every $0<\delta<t$ and every $r>0$  such that  $B_{2r}(x_0)\Subset\Omega$, there are constants $d=d(N,t,q,\delta)\geq 1$ and $C=C(N,s,p,t,q,\delta)>0$ such 
\begin{equation*}
\begin{split}
[u]_{C^\delta(B_{r/2}(x_0))}
\leq \frac{C(A^{-1}r^{tq-sp}\mathcal{M}_r^{p-q}+1)^d}{r^\delta}\mathcal{M}_r,
\end{split}
\end{equation*}
where 
$$
\mathcal{M}_r=\|u\|_{L^\infty(B_{r}(x_0))}+\mathrm{Tail}_{p-1,s\,p}(u;x_0,r)+\mathrm{Tail}_{q-1,t\,q}(u;x_0,r)+1.
$$
If in addition, $sp\geq tq$, then we also have the estimate
\begin{equation*}
\begin{split}
[u]_{C^\delta(B_{r/2}(x_0))}
\leq \frac{C(Ar^{sp-tq}\mathcal{M}_r^{q-p}+1)^d}{r^\delta}\mathcal{M}_r.
\end{split}
\end{equation*}
\end{Theorem}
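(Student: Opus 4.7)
The plan is to mimic the proof of Theorem~\ref{teo:1}, swapping the roles of the two operators: since here $p\leq 2\leq q$, it is the $(t,q)$-operator that carries the superquadratic structure, so one iterates Proposition~\ref{prop:improvesuper} in place of Proposition~\ref{prop:improve}; this produces the first estimate, with $\tilde A^{-1}+1$ replacing $\tilde A+1$. Under the extra hypothesis $sp\geq tq$, one alternatively iterates Proposition~\ref{prop:improvesub} to obtain the second estimate, with $\tilde A+1$ on the right-hand side.

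Proceed as in the proof of Theorem~\ref{teo:1}: assume $x_0=0$ and set $u_r(x)=u(rx)/\mathcal{M}_r$. A direct scaling computation shows that $u_r$ solves
\[
(-\Delta_p)^s u_r+\tilde A\,(-\Delta_q)^t u_r=0\quad\text{in }B_2,\qquad \tilde A:=A\,r^{sp-tq}\mathcal{M}_r^{q-p},
\]
and satisfies the normalization \eqref{eq2tsuper:bounds}. It is enough to prove $[u_r]_{C^\delta(B_{1/2})}\leq C(\tilde A^{-1}+1)^d$ and, under $sp\geq tq$, $[u_r]_{C^\delta(B_{1/2})}\leq C(\tilde A+1)^d$. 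Fix $\delta\in(0,t)$, choose $i_\infty\in\mathbb{N}\setminus\{0\}$ with $t-\delta>N/(q+i_\infty)$, and set $h_0=1/(64\,i_\infty)$ together with the nested radii $R_i=7/8-(2i+1)/(16\,i_\infty)$ for $i=0,\dots,i_\infty$, so that $R_0+4h_0=7/8$ and $R_{i_\infty-1}-4h_0=3/4$.

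For the first estimate, apply Proposition~\ref{prop:improvesuper} iteratively with $R=R_i$ and $\mu=q+i$; using $R_i-4h_0=R_{i+1}+4h_0$, this yields a telescoping chain bounding $\sup_{0<|h|<h_0}\|\delta^2_h u_r/|h|^t\|_{L^{q+i_\infty}(B_{3/4})}$ by $C(\tilde A^{-1}+1)^{i_\infty+1}(\sup_{0<|h|<h_0}\|\delta^2_h u_r/|h|^t\|_{L^q(B_{7/8})}+1)$. A Caccioppoli-type bound analogous to \cite[Lemma~3.2]{GKS}, obtained by testing with $u_r\eta^q$ and exploiting \eqref{eq2tsuper:bounds}, gives $\tilde A\,[u_r]^q_{W^{t,q}(B_{7/8+2h_0})}\leq C(\tilde A+1)$, hence $[u_r]_{W^{t,q}(B_{7/8+2h_0})}\leq C(\tilde A^{-1}+1)^{1/q}$; combining with \cite[Proposition~2.6]{Brolin} and $\delta_h u=(\delta_{2h}u-\delta^2_h u)/2$ produces the required starting estimate $\sup_{0<|h|<h_0}\|\delta^2_h u_r/|h|^t\|_{L^q(B_{7/8})}\leq C(\tilde A^{-1}+1)$. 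One then multiplies by a cutoff $\chi\in C_0^\infty(B_{5/8})$ with $\chi\equiv 1$ on $B_{1/2}$, uses the product rule for $\delta^2_h$ to pass to $[u_r\chi]_{\mathcal{B}^{t,q+i_\infty}_\infty(\mathbb{R}^N)}$, and invokes Lemma~\ref{emb1} followed by Theorem~\ref{emb2} (with $\beta=t$, $\alpha=\delta$, $q=q+i_\infty$) to conclude $[u_r]_{C^\delta(B_{1/2})}\leq C(\tilde A^{-1}+1)^d$.

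For the second estimate one iterates Proposition~\ref{prop:improvesub} instead. The integrability jumps by $2$ per step (not $1$), but the structure is identical and every factor $(\tilde A^{-1}+1)$ becomes $(\tilde A+1)$. The main technical obstacle is the starting estimate, which must now carry $(\tilde A+1)$-dependence rather than $(\tilde A^{-1}+1)$-dependence; otherwise the iteration accumulates a $\tilde A^{-1}$-factor that cannot be absorbed into $(\tilde A+1)^d$. Under $sp\geq tq$ and $|u_r|\leq 1$, this is reconciled by observing that the same Caccioppoli also gives $[u_r]^p_{W^{s,p}(B_{7/8+2h_0})}\leq C(\tilde A+1)$, and that for $|x-y|\leq 1$ one has $|u_r(x)-u_r(y)|^{q-p}\leq 2^{q-p}$ and $|x-y|^{-N-tq}\leq |x-y|^{-N-sp}$, so that
\[
[u_r]^q_{W^{t,q}(B_{7/8+2h_0})}\leq 2^{q-p}\,[u_r]^p_{W^{s,p}(B_{7/8+2h_0})}+C\leq C(\tilde A+1),
\]
where the $+C$ absorbs the contribution from the far-off-diagonal region $|x-y|>1$. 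Transferring to second differences and iterating exactly as above yields $[u_r]_{C^\delta(B_{1/2})}\leq C(\tilde A+1)^d$. The dichotomy between the two estimates is thus precisely the dichotomy in the available starting estimates.
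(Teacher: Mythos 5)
Your proof follows the same overall architecture as the paper's: scale to $u_r$, iterate the appropriate Besov-improvement proposition, bootstrap a starting estimate from a Caccioppoli bound, and close with Lemma~\ref{emb1} and Theorem~\ref{emb2}. The paper proves only the second estimate in detail, obtains the starting estimate $[u_r]_{W^{t,q}(B_{7/8+2h_0})}\leq C(\tilde A+1)$ by ``proceeding exactly as in the proof of \cite[Lemma 3.2]{GKS},'' and dismisses the first estimate with the remark that one swaps Proposition~\ref{prop:improvesub} for Proposition~\ref{prop:improvesuper}, reversing the $A$-dependence. You do essentially the same thing but make the crucial step transparent: the Caccioppoli directly yields $[u_r]_{W^{t,q}}\leq C(\tilde A^{-1}+1)^{1/q}$ (which feeds the superquadratic iteration and produces the first estimate), whereas the subquadratic iteration multiplies by $(\tilde A+1)$ at each step and therefore needs a starting estimate of the form $C(\tilde A+1)$ to close. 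Your observation that, for $|x-y|\le 1$, $sp\geq tq$ gives $|x-y|^{-N-tq}\leq |x-y|^{-N-sp}$ and $|u_r(x)-u_r(y)|^{q-p}\leq 2^{q-p}$, so that $[u_r]^q_{W^{t,q}}\leq 2^{q-p}[u_r]^p_{W^{s,p}}+C\leq C(\tilde A+1)$, is a clean self-contained derivation of that estimate. It also exhibits precisely why the second estimate requires the extra hypothesis $sp\geq tq$, a point the paper leaves implicit behind the external reference. Your argument is correct; the only cosmetic nit is that you state the choice of $i_\infty$ with $t-\delta>N/(q+i_\infty)$ (tuned to the jump-by-one superquadratic iteration), whereas the subquadratic iteration jumps by two, so for the second estimate one would naturally write $t-\delta>N/(q+2i_\infty)$ as in the paper --- but you explicitly flag the jump-by-two structure, so this is a matter of presentation, not a gap.
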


\begin{proof} We only prove the second estimate. The first estimate follows using Proposition \ref{prop:improvesuper} instead of Proposition \ref{prop:improvesub} and noting that the dependencies on $A$ in those propositions are reversed. By \cite[Proposition 3.3]{GKS}, we have $u\in L^\infty_{\mathrm{loc}}(\Omega)$.
 Without loss of generality, we assume $x_0=0$ and set 
\[
\mathcal{M}_r=\|u\|_{L^\infty(B_{r})}+\mathrm{Tail}_{p-1,s\,p}(u;0,r)+\mathrm{Tail}_{q-1,t\,q}(u;0,r)+1>0.
\]
 It is sufficient to prove that the function 
\[
u_r(x):=\frac{1}{\mathcal{M}_r}\,u(r\,x),\qquad \mbox{ for }x\in B_2,
\]
satisfies the estimate
\[
[u_r]_{C^{\delta}(B_{\frac{1}{2}})}\leq C(Ar^{sp-tq}\mathcal{M}_r^{q-p}+1)^d,
\]
with $C=C(N,s,p,t,q,\delta)>0$ and $d=d(N,t,q,\delta)\geq 1$.  Indeed, by scaling, we would recover  the desired estimate. Observe that by definition, the function $u_r$ 
is a local weak solution of $(-\Delta_p)^s u+Ar^{sp-tq}\mathcal{M}_r^{q-p}(-\Delta_q)^t u=0$ in $B_2$ and satisfies
\begin{equation}
\label{eq2t:assumption}
\|u_r\|_{L^\infty(B_1)}\leq 1,\qquad \int_{\mathbb{R}^N\setminus B_1}\frac{|u_r(y)|^{l-1}}{|y|^{N+ml}}\,  dy\leq 1,
\end{equation}
for $(l,m)\in\{(p,s),(q,t)\}$. 

 In order to simplify the presentation we will from now on omit the subscript $r$ and simply write $u$ in place of $u_r$. 
\vskip.2cm\noindent
We fix $0<\delta<t$ and choose $i_\infty\in\mathbb{N}\setminus\{0\}$ such that
\[
t-\delta> \frac{N}{q+2i_\infty}.
\]
 and  define the sequence of exponents  
\[
q_i=q+2i,\qquad i=0,\dots,i_\infty.
\]
 In addition, we define   
$$
h_0=\frac{1}{64\,i_\infty},\qquad R_i=\frac{7}{8}-4\,(2i+1)\,h_0=\frac{7}{8}-\frac{2i+1}{16\,i_\infty},\qquad \mbox{ for } i=0,\dots,i_\infty,
$$
and note that this implies
\[
R_0+4\,h_0=\frac{7}{8}\qquad \mbox{ and }\qquad R_{i_\infty-1}-4\,h_0=\frac{3}{4}.
\] 
By applying Proposition \ref{prop:improvesub} with
\[
R=R_i\qquad \mbox{ and }\qquad \mu=q_i=q+2i,\qquad \mbox{ for } i=0,\ldots,i_\infty-1,
\] 
and observing that $R_i-4\,h_0=R_{i+1}+4\,h_0$, $4h_0<R_i\leq 1-5h_0$, we obtain the  following chain of  inequalities
\[
\left\{\begin{array}{rcll}
\sup\limits_{0<|h|< h_0}\left\|\dfrac{\delta^2_h u}{|h|^{t}}\right\|_{L^{q_1}(B_{R_1+4h_0})}\leq& C (Ar^{sp-tq}\mathcal{M}_r^{q-p}+1)\,\sup\limits_{0<|h|< h_0}\left(\left\|\dfrac{\delta^2_h u }{|h|^t}\right\|_{L^q(B_{\frac{7}{8}})}+1\right)
\\
\sup\limits_{0<|h|< h_0}\left\|\dfrac{\delta^2_h u}{|h|^{t}}\right\|_{L^{q_{i+1}}(B_{R_{i+1}+4h_0})}\leq&C(Ar^{sp-tq}\mathcal{M}_r^{q-p}+1)\,\sup\limits_{0<|h|< h_0}\left(\left\|\dfrac{\delta^2_h u }{|h|^t}\right\|_{L^{q_i}(B_{R_i+4h_0})}+1\right)
\end{array}
\right.
\]
and finally
\begin{align*}
\sup_{0<|h|< {h_0}}\left\|\frac{\delta^2_h u}{|h|^{t}}\right\|_{L^{q_{i_\infty}}(B_{\frac{3}{4}})}&=\sup_{0<|h|< h_0}\left\|\frac{\delta^2_h u}{|h|^{t}}\right\|_{L^{q+i_\infty}(B_{R_{i_\infty-1}-4h_0})}\\
&\leq C(Ar^{sp-tq}\mathcal{M}_r^{q-p}+1)\sup_{0<|h|< h_0}\left(\left\|\frac{\delta^2_h u }{|h|^t}\right\|_{L^{q+i_\infty-1}(B_{R_{i_\infty-1}+4h_0})}+1\right).
\end{align*}
Here $C=C(N,s,p,t,q,\delta,h_0)>0$. Using \eqref{eq2t:assumption} and proceeding exactly as in the proof of \cite[Lemma 3.2]{GKS}, we get
\begin{equation}\label{eq2t:fsn}
[u_r]_{W^{t,q}(B_{\frac{7}{8}+2h_0})}\leq C(Ar^{sp-tq}\mathcal{M}_r^{q-p}+1),
\end{equation}
for some $C=C(N,s,p,t,q)>0$.  By \cite[Proposition 2.6]{Brolin} together with the relation 
$$
\delta_h u =\frac12\left(\delta_{2h} u-\delta^2_{h}u\right),
$$ 
we have
\begin{align*}
\sup_{0<|h|< h_0}\left\|\frac{\delta^2_h u }{|h|^t}\right\|_{L^{q}(B_{\frac{7}{8}})}&\leq C\,\sup_{0<|h|<2\,h_0}\left\|\frac{\delta_h u }{|h|^t}\right\|_{L^{q}(B_{\frac{7}{8}})}\nonumber \\
&\leq C\left([u]_{W^{t,q}(B_{\frac{7}{8}+2\,h_0})}+\|u\|_{L^\infty(B_{\frac{7}{8}+2\,h_0})}\right)\\
&\leq C(Ar^{sp-tq}\mathcal{M}_r^{q-p}+1),\nonumber 
\end{align*}
where we have used \eqref{eq2t:fsn} and the assumptions \eqref{eq2t:assumption} on $u$.  Therefore, the above chain of  inequalities leads us to
\begin{equation}
\sup_{0<|h|< {h_0}}\left\|\frac{\delta^2_h u}{|h|^{t}}\right\|_{L^{q_{i_\infty}}(B_{\frac{3}{4}})}\leq C(Ar^{sp-tq}\mathcal{M}_r^{q-p}+1)^{i_\infty+1},
\label{eq2t:otherest}
\end{equation}
where $C=C(N,s,p,t,q,\delta,h_0)>0$.
Take $\chi\in C_0^\infty(B_{\frac{5}{8}})$ such that 
$$
0\leq \chi\leq 1, \qquad \chi \equiv 1 \text{ in $B_{\frac{1}{2}}$},\qquad |\nabla \chi|\leq C,\qquad |D^2 \chi|\leq C,
$$
for some $C=C(N)>0$.
In particular 
$$
\frac{|\delta_h\chi|}{|h|^t}\leq C,\qquad \frac{|\delta^2_h\chi|}{|h|^t}\leq C,
$$
for all $|h| > 0$, where $C=C(N)>0$. We also note that
$$
\delta^2_h (u\,\chi)=\chi_{2h}\,\delta^2_h u+2\,\delta_h u\, \delta_h \chi_h+u\,\delta^2_h\chi.
$$
Hence, 
\begin{align*}
[u\,\chi]_{\mathcal{B}^{t,q_{i_\infty}}_\infty(\mathbb{R}^N)}&\leq C\left(\sup_{0<|h|< h_0}\left\|\frac{\delta^2_h (u\,\chi)}{|h|^t}\right\|_{L^{q_{i_\infty}}(\mathbb{R}^N)}+1\right)\\&\leq C\sup_{0<|h|< h_0}\,\left(\left\|\frac{\chi_{2h}\,\delta^2_h u}{|h|^t}\right\|_{L^{q_{i_\infty}}(\mathbb{R}^N)}+\left\|\frac{\delta_h u\,\delta_h\chi}{|h|^t}\right\|_{L^{q_{i_\infty}}(\mathbb{R}^N)}+\left\|\frac{u\,\delta^2_h\chi}{|h|^t}\right\|_{L^{q_{i_\infty}}(\mathbb{R}^N)}+1\right) \nonumber\\
&\leq C\sup_{0<|h|< h_0}\,\left(\left\|\frac{\delta^2_h u}{|h|^t}\right\|_{L^{q_{i_\infty}}(B_{\frac{5}{8}+2\,h_0})}+\|\delta_h u\|_{L^{q_{i_\infty}}(B_{\frac{5}{8}+h_0})}+\|u\|_{L^{q_{i_\infty}}(B_{\frac{5}{8}+2h_0})}+1\right) \\
&\leq C\sup_{0<|h|< h_0}\,\left(\left\|\frac{\delta^2_h u}{|h|^t}\right\|_{L^{q_{i_\infty}}(B_{\frac{3}{4}})}+\|u\|_{L^{q_{i_\infty}}(B_{\frac{3}{4}})}+1\right)\\
&\leq C(Ar^{sp-tq}\mathcal{M}_r^{q-p}+1)^{i_\infty+1}, \nonumber
\end{align*}
by \eqref{eq2t:otherest}, where $C=C(N,s,p,t,q,\delta,h_0)>0$. By Lemma \ref{emb1}, we have
\begin{equation*}
[u\,\chi]_{\mathcal{N}_\infty^{t,q_{i_\infty}}(\mathbb{R}^N)}\leq  C(N,\delta,s)(\,[u\,\chi]_{\mathcal{B}_\infty^{t,q_{i_\infty}}(\mathbb{R}^N)}+1) \leq C(Ar^{sp-tq}\mathcal{M}_r^{q-p}+1)^{i_\infty+1},
\end{equation*}
where $C=C(N,s,p,t,q,\delta,h_0)>0$.
Finally,  by the choice  of $i_\infty$ we have
\[
t\,q_{i_\infty}>N\qquad \mbox{and }\qquad \delta<t-\frac{N}{q_{i_\infty}}.
\] 
 We may therefore apply  Theorem~\ref{emb2} with  $\beta=s$, $\alpha=\delta$ and $q=q_{i_\infty}$ to obtain
\begin{align*}
[u]_{C^\delta(B_{\frac{1}{2}})}&= [u\,\chi]_{C^\delta(B_{\frac{1}{2}})}\\
&\leq C\left([u\,\chi]_{\mathcal{N}_\infty^{t,q_{i_\infty}}(\mathbb{R}^N)}\right)^{\frac{\delta\,q_{i_\infty}+N}{t\,q_{i_\infty}}}\,\left(\|u\,\chi\|_{L^q(\mathbb{R}^N)}\right)^\frac{(t-\delta)\,q_{i_\infty}-N}{t\,q_{i_\infty}}\\
&\leq C(Ar^{sp-tq}\mathcal{M}_r^{q-p}+1)^\frac{(\delta q_{i_\infty}+N)(i_\infty+1)}{tq_{i_\infty}}\\
&\leq C(Ar^{sp-tq}\mathcal{M}_r^{q-p}+1)^d,
\end{align*}
where $C=C(N,s,p,t,q,\delta,h_0)>0$ and $d=\max\left(1,\frac{(\delta q_{i_\infty}+N)(i_\infty+1)}{tq_{i_\infty}}\right)\geq 1$.
This concludes the proof upon recalling the dependencies of $h_0$ and $i_\infty$.
\end{proof}

\subsection{Higher H\"older regularity using the subquadratic scheme}
 In this subsection, we improve the H\"older regularity using the subquadratic scheme.
\begin{prop}
\label{prop:improve3} (Improved Besov regularity using the subquadratic scheme)
Suppose $A>0$ and let $u\in X(B_2)$ be a local weak solution of equation \eqref{eq:maineqnew} in $B_2$.
Suppose that 
\begin{equation}
\label{eq2improvesub:boundsimprovebesovsub}
\|u\|_{L^\infty(B_1)}\leq 1,\quad  \int_{\mathbb{R}^N\setminus B_1} \frac{|u(y)|^{l-1}}{|y|^{N+m\,l}}\,  dy\leq 1, \quad \text{and}\quad [u]_{C^\gamma(B_1)}\,  \leq 1,
\end{equation} 
for some $$\gamma\in \left(\max \left(\frac{tq-2}{q-2},0\right),1\right).$$ Here $(l,m)\in \{(s,p),(t,q)\}$.
Then for any $\alpha\in [0,1)$, $q\leq\mu <\infty$, $0<h_0<\frac{1}{10}$, $R$ such that $4\,h_0<R\le 1-5\,h_0$,  
we have
\begin{equation*}
\begin{split}
\sup_{0<|h|< h_0}\left\|\frac{\delta^2_h u}{|h|^{\frac{sp-\gamma(p-2)+\alpha\mu}{\mu+1}}}\right\|_{L^{\mu+1}(B_{R-4\,h_0})}^{{\mu}+1}&\leq C\Big({A}+1\Big)\,\left(\sup_{0<|h|< h_0}\left\|\frac{\delta^2_h u }{|h|^\alpha}\right\|_{L^{\mu}(B_{R+4h_0})}^{\mu}+1\right),
\end{split}
\end{equation*}
for some positive constant $C=C(N,s,p,t,q,\alpha,\gamma,\mu,h_0)$.
\end{prop}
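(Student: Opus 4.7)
The plan is to follow the template of Proposition \ref{prop:improvesub}, injecting the $C^\gamma$-hypothesis at the two points where it simultaneously improves the subquadratic lower bound on $\mathcal{I}_{1,p,s,1}$ and shrinks the superquadratic error term $\mathcal{I}_{11}$ appearing inside $\mathcal{I}_{1,q,t,A}$. First I would test \eqref{wksol} with
$$\varphi=J_{\beta+1}\left(\frac{\delta_h u}{|h|^{\theta}}\right)\eta^q,$$
for $\eta$ as in \eqref{eq2tsuper:etap} and some $\beta\geq 2$, $1+\theta\beta\geq 0$, obtaining the six-term decomposition $\sum(\mathcal{I}_{1,l,m,k}+\mathcal{I}_{2,l,m,k}+\mathcal{I}_{3,l,m,k})=0$ exactly as in \eqref{eq2tsub:Il}.

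The essential new input is the systematic use of the bound $|u(x)-u(y)|\leq |x-y|^{\gamma}$ on $B_1$ inside the kernels. For the subquadratic $(p,s,1)$-part, since $p-2\leq 0$ we have
$$
(|u_h(x)-u_h(y)|+|u(x)-u(y)|)^{p-2}\geq c\,|x-y|^{\gamma(p-2)},
$$
which upgrades the lower bound of $\mathcal{I}_{1,p,s,1}$ from \cite[p.\ 5767]{GL} to produce a positive $W^{\sigma,2}$-seminorm of $|\delta_h u|^{(\beta-1)/2}\delta_h u/|h|^{(1+\theta\beta)/2}$ times $\eta^{q/2}$ with the \emph{improved} fractional order $2\sigma=sp-\gamma(p-2)$ in place of $sp$. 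For the superquadratic $(q,t,A)$-part, since $q-2\geq 0$ the reverse inequality $|u(x)-u(y)|^{q-2}\leq |x-y|^{\gamma(q-2)}$, inserted into the $\mathcal{I}_{11}$-estimate of \cite[pp.\ 814--819]{BLS}, yields a kernel of the form $|x-y|^{\gamma(q-2)+2-N-tq}$, which is locally integrable near the diagonal precisely under the sharp hypothesis $\gamma>(tq-2)/(q-2)$; this collapses $|A\mathcal{I}_{11}|$ into $CA\int_{B_R}|\delta_h u|^{\beta+1}/|h|^{1+\theta\beta}\,dx$. The remaining pieces $\mathcal{I}_{12}$ and $\mathcal{I}_{j,l,m,k}$ for $j\in\{2,3\}$ will be estimated verbatim as in Proposition \ref{prop:improvesub}, using $\|u\|_{L^\infty(B_1)}\leq 1$ and the tail bounds in \eqref{eq2improvesub:boundsimprovebesovsub}.

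Summing the identity \eqref{eq2tsub:Il}, dropping the non-negative $W^{t,q}$-seminorm coming from $\mathcal{I}_{1,q,t,A}$, and collecting the $A$-prefactors produces an inequality of the schematic form
$$
\left[\frac{|\delta_h u|^{\frac{\beta-1}{2}}\delta_h u}{|h|^{\frac{1+\theta\beta}{2}}}\eta^{q/2}\right]^2_{W^{\sigma,2}(B_R)}\leq C(A+1)\left(1+\int_{B_R}\frac{|\delta_h u|^{\beta}}{|h|^{1+\theta\beta}}\,dx\right).
$$
Applying the fractional Sobolev step of \cite[estimate (3.25)]{GL} to the left-hand side, specializing to $\beta=\mu$ and $\theta=\alpha-\tfrac{1}{\mu}$ (so that $(1+\theta\beta)/\beta=\alpha$ and the exponent on $|h|$ becomes exactly $(sp-\gamma(p-2)+\alpha\mu)/(\mu+1)$), and finally passing from first- to second-order difference quotients on the right-hand side using $\alpha<1$ as in Proposition \ref{prop:improvesub}, would deliver the claimed estimate. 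The principal technical obstacle I anticipate is the careful bookkeeping around the $C^\gamma$-based bound on $\mathcal{I}_{11}$: it is precisely the requirement that its kernel be diagonal-integrable that forces the sharp hypothesis $\gamma>\max\bigl((tq-2)/(q-2),0\bigr)$, and one must verify that this replacement does not inject any hidden $|h|$-dependence that would spoil the stated scaling.
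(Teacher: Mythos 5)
Your proposal is correct and follows the paper's own proof essentially step for step: same test function $J_{\beta+1}(\delta_h u/|h|^\theta)\eta^q$ and six-term decomposition, the $C^\gamma$ hypothesis inserted at exactly the two places you identify (the subquadratic lower bound on $\mathcal{I}_{1,p,s,1}$ yielding the improved order $2\sigma=sp-\gamma(p-2)$; the $\mathcal{I}_{11}$-kernel inside $\mathcal{I}_{1,q,t,A}$ made diagonal-integrable by $\gamma>(tq-2)/(q-2)$), and the same closing specialization $\beta=\mu$, $\theta=\alpha-\tfrac{1}{\mu}$ followed by the first-to-second order difference-quotient switch using $\alpha<1$. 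The only (cosmetic) deviation is that the paper bounds $\mathcal{I}_{12}$ directly by $C\int_{B_R}|\delta_h u|^{\beta}/|h|^{1+\theta\beta}\,dx$ via $\|u\|_{L^\infty(B_1)}\le 1$, rather than literally reusing the higher-power form from Proposition \ref{prop:improvesub}.
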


\begin{proof}
The setup and the starting point is the same as in the proof of Proposition \ref{prop:improvesuper}. We also arrive at 
\begin{equation*}
\begin{split}
&\sum_{(l,m,k)\in\{(p,s,1),(q,t,A)\}}k\iint_{\mathbb{R}^N\times\mathbb{R}^N} \frac{\Big(J_l(u_h(x)-u_h(y))-J_l(u(x)-u(y))\Big)}{|h|^{1+\theta\beta}}\\
&\times\Big(J_{\beta+1}(u_h(x)-u(x))\,\eta(x)^q-J_{\beta+1}(u_h(y)-u(y))\,\eta(y)^q\Big)\,d\mu_{l,m}=0,
\end{split}
\end{equation*}
that is 
\begin{equation}
\label{eq2improvesub:Il}
\sum_{(l,m,k)\in\{(p,s,1),(q,t,A)\}}(\mathcal{I}_{1,l,m,k}+\mathcal{I}_{2,l,m,k}+\mathcal{I}_{3,l,m,k})=0,
\end{equation}
where
\[
\begin{split}
\mathcal{I}_{1,l,m,k}:=k\iint_{B_R\times B_R}& \frac{\Big(J_l(u_h(x)-u_h(y))-J_l(u(x)-u(y))\Big)}{|h|^{1+\theta\beta}}\\
&\times\Big(J_{\beta+1}(u_h(x)-u(x))\,\eta(x)^q-J_{\beta+1}(u_h(y)-u(y))\,\eta(y)^q\Big)d\mu_{l,m},
\end{split}
\]
\[
\begin{split}
\mathcal{I}_{2,l,m,k}:=k\iint_{B_\frac{R+r}{2}\times (\mathbb{R}^N\setminus B_R)}& \frac{\Big(J_l(u_h(x)-u_h(y))-J_l(u(x)-u(y))\Big)}{|h|^{1+\theta\,\beta}}\\
&\times J_{\beta+1}(u_h(x)-u(x))\,\eta(x)^q\,d\mu_{l,m},
\end{split}
\]
and
\[
\begin{split}
\mathcal{I}_{3,l,m,k}:=-k\iint_{(\mathbb{R}^N\setminus B_R)\times B_\frac{R+r}{2}}& \frac{\Big(J_l(u_h(x)-u_h(y))-J_l(u(x)-u(y))\Big)}{|h|^{1+\theta\,\beta}}\\
\times &
 J_{\beta+1}(u_h(y)-u(y))\,\eta(y)^q\,d\mu_{l,m},
\end{split}
\]
where we used that $\eta$ vanishes identically outside $B_{\frac{R+r}{2}}$.

We estimate $\mathcal{I}_{j,l,m,k}$ for $j=1,2,3$ and $(l,m,k)\in\{(p,s,1),(q,t,A)\}$ separately, starting with $(l,m)=(p,s)$. Note that $\eta^q=(\eta^\frac{q}{2})^2$ and that $\eta^\frac{q}{2}$ is Lipschitz and enjoys the same properties as $\eta$. Since $p\leq 2$ and \eqref{eq2improvesub:boundsimprovebesovsub} holds, we may proceed along the lines of the proof of the estimates of $\mathcal{I}_1, \mathcal{I}_2$ and $\mathcal{I}_3$ in \cite{GL} (see pages 5767-6768 and estimates (3.17), (3.18)), and obtain for $2\sigma=sp-\gamma(p-2)$ that
\begin{equation}\label{eq2improvesub:estI1q}
\begin{split}
\mathcal{I}_{1,p,s,1}
&\geq {c}\left[\frac{|\delta_h u|^\frac{\beta-1}{2}\,\delta_h u}{|h|^\frac{1+\theta\,\beta}{2}}\,\eta^{\frac{q}{2}}\right]^2_{W^{\sigma,2}(B_R)}-C\int_{B_R} \frac{|\delta_h u(x)|^{p+\beta-1}}{|h|^{1+\theta\,\beta}} dx\\
& -C\,\int_{B_R}\, \frac{|\delta_h u(x)|^{\beta+1}}{|h|^{1+\theta\,\beta}}dx.
\end{split}
\end{equation}
Here $c=c(p,\beta)>0$ and $C=C(N,s,p,\beta,h_0)>0$.

Moreover, 
\begin{equation}
\label{eq2improvesub:estI2q}
|\mathcal{I}_{2,p,s,1}|\leq C\int_{B_R}\frac{|\delta_h u(x)|^{\beta}}{|h|^{1+\theta\,\beta}} dx,
\end{equation}
and
\begin{equation}
\label{eq2improvesub:estI3q}
|\mathcal{I}_{3,p,s,1}|\leq C\int_{B_R}\frac{|\delta_h u(x)|^{\beta}}{|h|^{1+\theta\,\beta}} dx.
\end{equation}
Here $C=C(N,s,p,h_0)>0$ is a constant.

Next, we estimate $\mathcal{I}_{j,l,m,1}$ for $j=1,2,3$ and $(l,m)=(q,t)$. Again, since $q\geq 2$, we may, taking the properties of $\eta$ into account, proceed along the lines of the proof of the estimates of $\mathcal{I}_1, \mathcal{I}_{11}$ and $\mathcal{I}_{12}$ from pages 814-819 in \cite{BLS}, and obtain
\begin{equation}\label{eq2improvesub:estI1p}
\begin{split}
\mathcal{I}_{1,q,t,A}\ge cA& \left[\frac{|\delta_h u|^\frac{\beta-1}{q}\,\delta_h u}{|h|^\frac{1+\theta\,\beta}{q}}\,\eta\right]^q_{W^{t,q}(B_R)}-CA\,\mathcal{I}_{11}-CA\,\mathcal{I}_{12}
\end{split}
\end{equation}
where $c=c(q,\beta)>0$ and $C=C(q,\beta)>0$, where we set 
\begin{equation*}
\begin{split}
\mathcal{I}_{11}:=\,\iint_{B_R\times B_R} &\left(|u_h(x)-u_h(y)|^\frac{q-2}{2}+|u(x)-u(y)|^\frac{q-2}{2}\right)^2\\
&\times \left|\eta(x)^\frac{q}{2}-\eta(y)^\frac{q}{2}\right|^2\, \frac{|\delta_h u(x)|^{\beta+1}+|\delta_h u(y)|^{\beta+1}}{|h|^{1+\theta\,\beta}}\,d\mu_{q,t},
\end{split}
\end{equation*}
and
\[
\begin{split}
\mathcal{I}_{12}&:=\,\iint_{B_R\times B_R}\, \left(\frac{|\delta_h u(x)|^{\beta-1+q}}{|h|^{1+\theta\,\beta}}+\frac{|\delta_h u(y)|^{\beta-1+q}}{|h|^{1+\theta\,\beta}}\right)\, |\eta(x)-\eta(y)|^q\,d\mu_{q,t}.
\end{split}
\]
In contrast to the proof of Proposition \ref{prop:improvesub} above, we estimate the term $\mathcal{I}_{11}$ as follows. Since $[u]_{C^{\gamma}}(B_1)\leq 1$, we deduce that
$$
 \left(|u_h(x)-u_h(y)|^\frac{q-2}{2}+|u(x)-u(y)|^\frac{q-2}{2}\right)^2\,\left|\eta(x)^\frac{q}{2}-\eta(y)^\frac{q}{2}\right|^2|x-y|^{-N-tq}\leq C | x-y|^{-N-tq+2+(q-2)\gamma}
$$
for $x,y\in B_R$ where the exponent is strictly larger than $-N$ by the choice of $\gamma$. This implies
\begin{equation}\label{eq2improvesub:estI11}
|\mathcal{I}_{11}|\leq C\,\int_{B_R}\frac{|\delta_h u(x)|^{\beta}}{|h|^{1+\theta\,\beta}}\,dx
\end{equation}
where  $C=C(N,t,q,h_0)>0$. As in the estimate of $\mathcal{I}_{12}$ on page 828 in \cite{BLS}, we get
\begin{equation}\label{eq2improvesub:estI12}
|\mathcal{I}_{12}|\leq C\,\int_{B_R}\frac{|\delta_h u(x)|^{\beta}}{|h|^{1+\theta\,\beta}}\,dx ,\qquad\mbox{ with } C=C(N,t,q,h_0)>0.
\end{equation}
Following estimate (4.9) in \cite{BLS}, we also obtain
\begin{equation}\label{eq2improvesub:estI23p}
\begin{split}
(|\mathcal{I}_{2,q,t,A}|+|\mathcal{I}_{3,q,t,A}|)&\leq CA\,\int_{B_{\frac{R+r}{2}}}\frac{|\delta_h u|^{\beta}}{|h|^{1+\theta\beta}} dx
\end{split}
\end{equation}
where $C=C(N,t,q,h_0)>0$. By inserting the estimates \eqref{eq2improvesub:estI1q}, \eqref{eq2improvesub:estI2q}, \eqref{eq2improvesub:estI3q}, \eqref{eq2improvesub:estI1p}, \eqref{eq2improvesub:estI11}, \eqref{eq2improvesub:estI12} and \eqref{eq2improvesub:estI23p} in \eqref{eq2improvesub:Il}, we obtain
\begin{equation}\label{eq2improvesub:estIl}
\begin{split}
\left[\frac{|\delta_h u|^\frac{\beta-1}{2}\,\delta_h u}{|h|^\frac{1+\theta\,\beta}{2}}\,\eta^\frac{q}{2}\right]^2_{W^{\sigma,2}(B_R)}&\leq  C(A+1)\int_{B_R}\frac{|\delta_h u(x)|^{\beta}}{|h|^{1+\theta\,\beta}}dx
\end{split}
\end{equation}
where $C=C(N,s,p,t,q,\beta,h_0)>0$. By estimating the term in the L.H.S. of \eqref{eq2improvesub:estIl} exactly as in (3.26) of \cite{GL} and combining this with the resulting estimate in \eqref{eq2improvesub:estIl} we obtain
\begin{equation*}
\begin{split}
\sup_{0<|h|< h_0}\int_{B_r}\left|\frac{\delta^2_h u}{|h|^\frac{2\sigma+1+\theta\beta}{\beta+1}}\right|^{\beta+1}\,dx&\leq C(A+1)\,\int_{B_{R}}\frac{|\delta_h u|^\beta}{|h|^{{1+\theta\beta}}},
\end{split}
\end{equation*}
for $C=C(N,s,p,t,q,\beta,\gamma,h_0)>0$, where the last inequality above follows by using  that $|u|\leq 1$ in $B_1$. 

Now, taking into account the assumption \eqref{eq2improvesub:boundsimprovebesovsub}, the result follows proceeding along the lines of the proof of (3.27) and (3.3) in \cite{GL}.
\end{proof}

We can now iterate the improved Besov-type regularity above to obtain an improved H\"older regularity exactly as in Proposition 3.2 of \cite{GL}. The only differences are that there is a factor $(A+1)$ picked up at each step of the iteration and that we  need to impose a lower bound on $\gamma$. We provide the details for completeness.
\begin{prop}
\label{prop:improve2} (Improved H\"older regularity using the subquadratic scheme)
Let $A>0$ and $u\in X(B_2)$ be a local weak solution of equation \eqref{eq:maineqnew} in $B_2$. Suppose that 
\begin{equation}
\label{eq2improveH:boundsimp}
\|u\|_{L^\infty(B_1)}\leq 1,\quad  \int_{\mathbb{R}^N\setminus B_1} \frac{|u(y)|^{l-1}}{|y|^{N+m\,l}}\,  dy\leq 1, \quad \text{and}\quad [u]_{C^\gamma(B_1)}\,  \leq 1,
\end{equation} 
for some $$\gamma\in \left(\max \left(\frac{tq-2}{q-2},0\right),1\right).$$  Here $(l,m)\in \{(s,p),(t,q)\}$. Define $\tau=\min (sp-\gamma(p-2),1)$. Then for any $\e\in(0,\tau)$, we have
$$
 [u]_{C^{\tau-\e}(B_\frac12)}\leq C(1+A)^d,
$$
where $d=d(N,s,p,\e,\gamma)\geq 1$ and $C=C(N,s,p,t,q,\e,\gamma)>0$ are constants.
\end{prop}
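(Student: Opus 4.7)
The plan is to iterate Proposition~\ref{prop:improve3} in a Moser-type scheme, adapted to the two-case structure already used in the proof of Theorem~\ref{teo:1higher}. Set $\tau_0 := sp-\gamma(p-2)$, so that $\tau = \min(\tau_0, 1)$, fix $\varepsilon \in (0,\tau)$, and let $q_i := q + i$. Define recursively $\alpha_0 := 0$ and $\alpha_{i+1} := (\tau_0 + \alpha_i q_i)/(q_i + 1)$. By induction $\alpha_i = i\tau_0/(q+i)$, so $\alpha_i \nearrow \tau_0$. Choose $h_0 > 0$ small and shrinking radii $R_i \in [3/4,7/8]$ with $R_i - 4h_0 = R_{i+1} + 4h_0$, exactly as in the proof of Theorem~\ref{teo:1}. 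The recursion for $\alpha_i$ is designed so that the output differentiability of Proposition~\ref{prop:improve3} at step $i$ (with inputs $\alpha = \alpha_i$ and $\mu = q_i$) equals $\alpha_{i+1}$, the input of step $i+1$; successive iterations therefore chain.

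\emph{Phase 1.} Starting from the trivial bound $\sup_{|h|<h_0}\|\delta_h^2 u\|_{L^q(B_{7/8})} \leq C$ (which follows from $\|u\|_{L^\infty(B_1)} \leq 1$), we apply Proposition~\ref{prop:improve3} iteratively with $(R, \alpha, \mu) = (R_i, \alpha_i, q_i)$, each step multiplying the estimate by a factor that is a bounded power of $C(1+A)$. When $\tau_0 \leq 1$ (so $\tau = \tau_0$, including the limiting case $\tau_0 = 1$), the admissibility condition $\alpha_i < 1$ is preserved for all $i$, and we stop at some $i_\infty$ large enough that $\alpha_{i_\infty} q_{i_\infty} > N$ and $\alpha_{i_\infty} - N/q_{i_\infty} > \tau - \varepsilon$. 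When $\tau_0 > 1$, we run Phase~1 only up to the largest index $k_\infty$ with $\alpha_{k_\infty} < 1$; the resulting output bound sits at differentiability $\alpha_{k_\infty + 1} \geq 1$ and integrability $q_{k_\infty + 1}$, but the integrability may be insufficient to close the final Sobolev step.

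\emph{Phase 2} (only when $\tau_0 > 1$). Fix $\gamma' := 1 - \varepsilon/2$, which satisfies $\gamma' < 1 < \alpha_{k_\infty+1}$. Since $|h| < h_0 < 1$, the Phase~1 output can be re-read as a bound on $\sup \|\delta_h^2 u / |h|^{\gamma'}\|_{L^{q_{k_\infty+1}}}$. Now iterate Proposition~\ref{prop:improve3} with the \emph{fixed} input differentiability $\alpha = \gamma'$ and $\mu = q_i$. At each step the output exponent $(\tau_0 + \gamma' q_i)/(q_i + 1)$ is strictly larger than $\gamma'$ (since $\tau_0 > \gamma'$), so the output is re-read back at $\gamma'$ and reused as input. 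Stop at $\ell_\infty$ large enough that $q_{k_\infty + \ell_\infty + 1} > 2N/\varepsilon$, which ensures $\gamma' - N/q_{k_\infty + \ell_\infty + 1} > 1 - \varepsilon$. In either scenario we arrive at an estimate
\[
\sup_{|h| < h_0} \left\| \frac{\delta_h^2 u}{|h|^{\beta^*}} \right\|_{L^{q^*}(B_{3/4})} \leq C(1 + A)^d
\]
with $\beta^* \in (0,1)$, $\beta^* q^* > N$, and $\beta^* - N/q^* > \tau - \varepsilon$. Taking a cut-off $\chi \in C_0^\infty(B_{5/8})$ with $\chi \equiv 1$ on $B_{1/2}$ and repeating the argument of \eqref{Neq-c1}--\eqref{Neq2-c1}, Lemma~\ref{emb1} produces a bound on $[u\chi]_{\mathcal{N}_\infty^{\beta^*, q^*}(\mathbb{R}^N)}$, and Theorem~\ref{emb2} with $\beta = \beta^*$, $q = q^*$, $\alpha = \tau - \varepsilon$ delivers $[u]_{C^{\tau-\varepsilon}(B_{1/2})} \leq C(1+A)^d$.

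The main obstacle is the proper handling of Phase~2: the chosen $\gamma'$ is fixed, but Proposition~\ref{prop:improve3} produces outputs strictly above $\gamma'$ rather than equal to it, so one must rely on the monotonicity of $|h|^{-\alpha}$ in $\alpha$ (valid because $|h| < h_0 < 1$) to re-read each output as a weaker bound at exponent $\gamma'$. A secondary point is to verify that the total number of iterations $i_\infty + \ell_\infty$ depends only on $N, s, p, t, q, \gamma, \varepsilon$, so that the final exponent $d$ has the advertised parameter dependence; this is immediate since $k_\infty \sim q/(\tau_0 - 1)$ and $\ell_\infty \sim 2N/\varepsilon$ are explicit functions of the data.
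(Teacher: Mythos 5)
Your proposal is correct, but it takes a noticeably longer route than the paper. You iterate Proposition~\ref{prop:improve3} with the integrability index $\mu=q_i=q+i$ increasing at every step, and you then need a two-phase structure (borrowed from the proof of Theorem~\ref{teo:1higher}) to handle the possibility that the differentiability exponent $\alpha_i$ crosses $1$ before the integrability is high enough for the Sobolev embedding. The paper does something simpler: it fixes the integrability exponent $\lambda$ once and for all, choosing $\lambda>2N/\varepsilon$ at the outset so that the embedding already works at $\tau-\varepsilon/2$. It then iterates $\alpha_0=0$, $\alpha_{i+1}=\bigl(sp-\gamma(p-2)+\alpha_i\lambda\bigr)/(\lambda+1)$, with $\mu=\lambda$ throughout, and stops at the first $i_\infty$ with $\alpha_{i_\infty}\ge\tau-\varepsilon/2$. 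Because $\tau-\varepsilon/2<\tau\le 1$, this stopping rule is always triggered while $\alpha_{i_\infty-1}<1$, so the admissibility requirement $\alpha\in[0,1)$ for Proposition~\ref{prop:improve3} is automatic and no second phase is needed. This single-phase argument also makes the exponent $d$ independent of $q$ (the recursion for $\alpha_i$ involves only $sp-\gamma(p-2)$ and $\lambda$), whereas in your scheme $q_i=q+i$ makes the iteration count, hence $d$, depend on $q$ — a minor cosmetic deviation from the statement, fixable by starting $q_i$ from a $q$-independent base. In short: both approaches chain Proposition~\ref{prop:improve3}, but the paper's choice of a large fixed $\mu$ eliminates your Phase~2 entirely and yields a shorter proof with cleaner parameter dependence.
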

\begin{proof}
 Pick  $0<\e<\tau$ and choose $\lambda$ so that 
$$
 \tau-\frac{\e}{2}-\frac{N}{\lambda}>\tau-\e>0.
$$
Furthermore, define the sequence of exponents  
\[
\alpha_0=0, \quad \alpha_i=\frac{sp-\gamma(p-2)+\alpha_{i-1} \lambda}{\lambda+1},\qquad i=0,\dots,i_\infty,
\]
where {$i_\infty\geq 1$}, depending on $N,s,p,\e$ and $\gamma$,  is chosen  such that 
$$
\alpha_{i_\infty-1}<\tau-\frac{\e}{2}\leq \alpha_{i_\infty}.
$$
 This is indeed possible as $\alpha_i$ are increasing toward $sp-\gamma(p-2)$.  We also introduce 
$$
h_0=\frac{1}{64\,i_\infty},\qquad R_i=\frac{7}{8}-4\,(2i+1)\,h_0=\frac{7}{8}-\frac{2i+1}{16\,i_\infty},\qquad \mbox{ for } i=0,\dots,i_\infty.
$$
 Note that this implies
\[
R_0+4\,h_0=\frac{7}{8}\qquad \mbox{ and }\qquad R_{i_\infty-1}-4\,h_0=\frac{3}{4}.
\] 
Taking into account \eqref{eq2improveH:boundsimp} and applying Proposition \ref{prop:improve3} and with $R=R_i$ and $\mu=\lambda$, and observing that $R_i-4\,h_0=R_{i+1}+4\,h_0$,
we  arrive at the following chain of  inequalities
\[
\left\{\begin{array}{rcll}
\sup\limits_{0<|h|< h_0}\left\|\dfrac{\delta^2_h u}{|h|^{\alpha_1}}\right\|_{L^{\lambda}(B_{R_1+4h_0})}&\leq& C(A+1)\,\sup\limits_{0<|h|< h_0}\left(\left\|\delta^2_h u \right\|_{L^\lambda (B_{\frac{7}{8}})}+1\right)\\
&&&\\
\sup\limits_{0<|h|< h_0}\left\|\dfrac{\delta^2_h u}{|h|^{\alpha_{i+1}}}\right\|_{L^{\lambda}(B_{R_{i+1}+4h_0})}&\leq& C(A+1)\,\sup\limits_{0<|h|< h_0}\left(\left\|\dfrac{\delta^2_h u }{|h|^{\alpha_i}}\right\|_{L^{\lambda}(B_{R_i+4h_0})}+1\right),
\end{array}
\right.
\]
for $i=1,\ldots,i_\infty-2$, and finally
\[\begin{split}
\sup_{0<|h|< {h_0}}\left\|\frac{\delta^2_h u}{|h|^{\alpha_{i_\infty}}}\right\|_{L^{\lambda}(B_{\frac{3}{4}})}&=\sup_{0<|h|< h_0}\left\|\frac{\delta^2_h u}{|h|^{\alpha_{i_\infty}}}\right\|_{L^{\lambda}(B_{R_{i_\infty-1}-4h_0})}\\
&\leq C(A+1)\sup_{0<|h|< h_0}\left(\left\|\frac{\delta^2_h u }{|h|^{\alpha_{i_\infty-1}}}\right\|_{L^{\lambda}(B_{R_{i_\infty-1}+4h_0})}+1\right).
\end{split}
\]
Here $C=C(N,s,p,t,q,\e,\gamma)>0$. By  \eqref{eq2improveH:boundsimp} we also have 
\begin{align*}\label{eq2improveH::1sttofrac}
\sup\limits_{0<|h|< h_0}\left\|\delta^2_h u \right\|_{L^\lambda (B_{\frac{7}{8}})}&\leq {3}\|u\|_{L^{\infty}(B_{1})}\leq {3}.
\end{align*}
Hence,  the above chain of inequalities implies 
\begin{equation}\label{eq2improveH:neqn11}
\sup_{0<|h|< {h_0}}\left\|\frac{\delta^2_h u}{|h|^{\alpha_{i_\infty}}}\right\|_{L^{\lambda}(B_{\frac{3}{4}})}\leq C(A+1)^{i_\infty},
\end{equation}
where $C=C(N,s,p,t,q,\e,\gamma)>0$  and since $\alpha_{i_\infty}\geq \tau-\frac{\e}{2}$, estimate \eqref{eq2improveH:neqn11}  implies
\begin{equation}\label{eq2improveH:neqn12}
\sup_{0<|h|< {h_0}}\left\|\frac{\delta^2_h u}{|h|^{\tau-\frac{\e}{2}}}\right\|_{L^{\lambda}(B_{\frac{3}{4}})}\leq c(A+1)^{i_\infty},
\end{equation}
where $C=C(N,s,p,t,q,\e,\gamma)>0$.
 Let  $\chi\in C_0^\infty(B_{\frac{5}{8}})$ such that
$$
0\leq \chi\leq 1, \qquad |\nabla \chi|\leq C,\qquad |D^2 \chi|\leq C \text{ in }B_\frac{5}{8}\qquad\text{ and }\qquad \chi \equiv 1 \text{ in }B_{\frac{1}{2}},
$$
for some $C=C(N)>0$.
In particular
$$
\frac{|\delta_h\chi|}{|h|}\leq C,\qquad \frac{|\delta^2_h\chi|}{{|h|^2}}\leq C,
$$for all $|h| > 0$, where $C=C(N)>0$.
We also note that
$$
\delta^2_h (u\,\chi)=\chi_{2h}\,\delta^2_h u+2\,\delta_h u\, \delta_h \chi_h+u\,\delta^2_h\chi.
$$
Hence, using the above properties of $\chi$ and \eqref{eq2improveH:neqn12}, we have 
\begin{align*}
[u\,\chi]_{\mathcal{B}^{\tau-\frac{\e}{2},\lambda}_\infty(\mathbb{R}^N)}&\leq C\left(\sup_{0<|h|< {h_0}}\left\|\frac{\delta^2_h (u\,\chi)}{|h|^{\tau-\frac{\e}{2}}}\right\|_{L^{\lambda}(\mathbb{R}^N)}+1\right)\\&\leq C\sup_{0<|h|< {h_0}} \,\left(\left\|\frac{\chi_{2h}\,\delta^2_h u}{|h|^{\tau-\frac{\e}{2}}}\right\|_{L^{\lambda}(\mathbb{R}^N)}+\left\|\frac{\delta_h u\,\delta_h\chi}{|h|^{\tau-\frac{\e}{2}}}\right\|_{L^{\lambda}(\mathbb{R}^N)}+\left\|\frac{u\,\delta^2_h\chi}{|h|^{\tau-\frac{\e}{2}}}\right\|_{L^{\lambda}(\mathbb{R}^N)}+1\right) \nonumber\\
&\leq C\sup_{0<|h|< {h_0}} \,\left(\left\|\frac{\delta^2_h u}{|h|^{\tau-\frac{\e}{2}}}\right\|_{L^{\lambda}(B_{\frac{5}{8}+2\,h_0})}+\|\delta_h u\|_{L^{\lambda}(B_{\frac{5}{8}+h_0})}+\|u\|_{L^{\lambda}(B_{\frac{5}{8}+2h_0})}+1\right) \\
&\leq C\sup_{0<|h|< {h_0}} \,\left(\left\|\frac{\delta^2_h u}{|h|^{\tau-\frac{\e}{2}}}\right\|_{L^{\lambda}(B_{\frac{3}{4}})}+\|u\|_{L^{\lambda}(B_{\frac{3}{4}})}+1\right)\\
&\leq C(A+1)^{i_\infty}, \nonumber
\end{align*}
where $C=C(N,s,p,t,q,\e,\gamma)>0$.
Thus by the above estimate and Lemma \ref{emb1}, we have
\begin{equation*}\label{eq2improveH:Neq2}
[u\,\chi]_{\mathcal{N}_\infty^{\tau-\frac{\e}{2},\lambda}(\mathbb{R}^N)}\leq  C(N,\e,\lambda)\, ([u\,\chi]_{\mathcal{B}_\infty^{\tau-\frac{\e}{2},\lambda}(\mathbb{R}^N)}+1) \leq C(A+1)^{i_\infty},
\end{equation*}
where $C=(N,s,p,t,q,\e,\gamma)>0$.  We note that by the   choice of   $\lambda$  we have
\[
 \tau-\e<\tau-\frac{\e}{2}-\frac{N}{\lambda}.
\] 
 Therefore, we may  apply Theorem \ref{emb2} with  $\beta=\tau-\frac{\e}{2}$, $\alpha=\tau-\e$  and $q=\lambda$  to obtain
\[
\begin{split}
[u]_{C^{\tau-\e}(B_{\frac{1}{2}})}&= [u\,\chi]_{C^{\tau-\e}(B_{\frac{1}{2}})}\\
&\leq C\left([u\,\chi]_{\mathcal{N}_\infty^{\tau-\frac{\e}{2},\lambda}(\mathbb{R}^N)}\right)^{\frac{(\tau-\e)\,\lambda+N}{(\tau-\frac{\e}{2})\,\lambda}}\,\left(\|u\,\chi\|_{L^\lambda (\mathbb{R}^N)}\right)^\frac{\frac{q\e}{2}-N}{(\tau-\frac{\e}{2})\,\lambda}\\
&\leq C(A+1)^{d},
\end{split}
\]
where $C=C(N,s,p,t,q,\e,\gamma)>0$ and $d=\max\left( 1,i_\infty \frac{(\tau-\e)\,\lambda+N}{(\tau-\frac{\e}{2})\,\lambda} \right)\geq 1$.
 This is the desired result.
\end{proof}

\subsubsection{Final H\"older regularity}
 In this subsection, we prove a normalized version of Theorem \ref{teo:1higher} by iterating the improved H\"older regularity obtained in the previous subsection. This is done exactly as in Theorem 3.3 of \cite{GL}.

\begin{Theorem}[Almost $\frac{sp}{p-1}$-regularity]
\label{teo:localalmost} Let $A>0$ and $u\in X(B_4)$ be a local weak solution of equation \eqref{eq:maineqnew} in $B_4$  satisfying
$$
\|u\|_{L^\infty(B_3)}\leq 1,\quad  \int_{\mathbb{R}^N\setminus B_3} \frac{|u(y)|^{l-1}}{|y|^{N+m\,l}}\,  dy\leq 1,
$$
where $(l,m)\in \{(p,s), (t,q)\}$.
Then for any $\e\in(0,\Gamma)$, there is $\sigma(s,p,t,q,\e)>0$ such that $u\in C^{\Gamma-\e}(B_\sigma)$, where
$$
\Gamma = \min\Big(\frac{sp}{p-1},1\Big).
$$
More precisely, there are constants $C=C(N,s,p,t,q,\e)>0$ and  $d=d(N,s,p,t,q,\e) \geq 1$ such that 
$$
[u]_{C^{\Gamma-\e}(B_\sigma)}\leq C(1+(A^{-1})^d+A^d).
$$
If in addition $sp\geq tq$, then we also have the slightly stronger estimate
$$
[u]_{C^{\Gamma-\e}(B_\sigma)}\leq C(1+A^d).
$$
\end{Theorem}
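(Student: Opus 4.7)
The plan is to bootstrap from the almost $C^t$-regularity of Theorem~\ref{teo:almostt} to almost $C^\Gamma$-regularity by iterating Proposition~\ref{prop:improve2} finitely many times. The key algebraic point is that, since $1<p\leq 2$, the map
\[
\Phi(\gamma)=\min\bigl(sp-\gamma(p-2),1\bigr)=\min\bigl(sp+\gamma(2-p),1\bigr)
\]
is a contraction on $[0,1]$ with fixed point $\Gamma=\min(\tfrac{sp}{p-1},1)$ and contraction factor $2-p\in[0,1)$. Hence, with a small loss $\e/K$ per step, finitely many iterations of $\Phi$ bring $\gamma_n$ above $\Gamma-\e$. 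One also checks directly that $\Gamma>\max(\tfrac{tq-2}{q-2},0)$ under the standing assumption $\tfrac{sp}{p-1}\geq\tfrac{tq}{q-1}$ (and using $t<1$), so every exponent reached during the iteration lies in the admissible range of Proposition~\ref{prop:improve2}.

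I would first apply Theorem~\ref{teo:almostt} on $B_{3/2}$. Because $\|u\|_{L^\infty(B_3)}\leq 1$ together with the tail hypotheses gives $\mathcal{M}_{3/2}\leq C(N,s,p,t,q)$, this produces
\[
[u]_{C^{\gamma_0}(B_{3/4})}\leq C(1+A^{-1})^{d_0}
\]
for any fixed $\gamma_0\in\bigl(\max(\tfrac{tq-2}{q-2},0),\,t\bigr)$. Under the additional assumption $sp\geq tq$ I would instead use the second (sharper) estimate of Theorem~\ref{teo:almostt} to obtain the same bound with $(1+A)^{d_0}$ in place of $(1+A^{-1})^{d_0}$.

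I would then pick $K=K(s,p,t,q,\e)\in\mathbb{N}$ so that the sequence $\gamma_{n+1}=\Phi(\gamma_n)-\e/K$ satisfies $\gamma_K\geq\Gamma-\e$, and run an inductive rescaling argument. Given $[u]_{C^{\gamma_n}(B_{\rho_n})}\leq M_n$ on $\rho_n=2^{-n}\cdot\tfrac{3}{4}$, set $v(x)=u(\rho_n x)/K_n$ with $K_n=\max\bigl(M_n\rho_n^{\gamma_n},\|u\|_{L^\infty(B_{\rho_n})},1\bigr)$. Then $v\in X(B_2)$ solves
\[
(-\Delta_p)^s v+A_n(-\Delta_q)^t v=0,\qquad A_n=AK_n^{q-p}\rho_n^{sp-tq},
\]
satisfies $\|v\|_{L^\infty(B_1)}\leq 1$ and $[v]_{C^{\gamma_n}(B_1)}\leq 1$, and inherits the required tail bounds after splitting $\int_{\R^N\setminus B_1}$ into contributions from $B_{1/\rho_n}\setminus B_1$ (bounded by $\|u\|_{L^\infty(B_3)}\leq 1$) and $\R^N\setminus B_{1/\rho_n}$ (bounded by the tail hypothesis on $u$). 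Proposition~\ref{prop:improve2} then yields $[v]_{C^{\gamma_{n+1}}(B_{1/2})}\leq C(1+A_n)^d$, which unscales to $M_{n+1}=CK_n\rho_n^{-\gamma_{n+1}}(1+A_n)^d$ on $B_{\rho_{n+1}}$.

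Iterating $K$ times produces the desired bound on $B_\sigma$ with $\sigma=2^{-K}\cdot\tfrac{3}{4}$. The final constant is the composition of $K$ polynomial operations in $A$ and $A^{-1}$, hence of the form $C(1+A^d+A^{-d})$; when $sp\geq tq$, the initial bound already avoids $A^{-1}$ and the inequalities $\rho_n^{sp-tq}\leq 1$ prevent $A^{-1}$ from ever entering $A_n$ in the subsequent steps, so the cleaner $C(1+A^d)$ follows. The main delicate point I foresee is the careful bookkeeping of how $M_n$, $K_n$ and $A_n$ propagate through the $K$ rescalings: because $K$ is fixed (depending only on $N,s,p,t,q,\e$) and $q-p\geq 0$, the successive constants stay polynomial rather than blowing up super-polynomially.
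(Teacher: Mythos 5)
Your proposal is essentially the paper's own proof: initialize with Theorem~\ref{teo:almostt}, then iteratively apply Proposition~\ref{prop:improve2} to dyadically rescaled and normalized functions, tracking the $A$-dependence polynomially across the finitely many steps. The iteration map $\Phi(\gamma)=\min(sp-\gamma(p-2),1)$ and its attractivity to $\Gamma$, the choice of fixed $i_\infty$ depending only on the structural parameters, and the two-case handling of $A$ versus $A^{-1}$ (with $sp\geq tq$ giving the cleaner version) are all as in the paper.

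One bookkeeping point that should be fixed: with your choice $K_n=\max\bigl(M_n\rho_n^{\gamma_n},\|u\|_{L^\infty(B_{\rho_n})},1\bigr)$, the rescaled function $v$ satisfies $\|v\|_{L^\infty(B_1)}\leq 1$ and $[v]_{C^{\gamma_n}(B_1)}\leq 1$, but the tail bound does \emph{not} come out $\leq 1$. After the change of variables one finds
\[
\int_{\mathbb{R}^N\setminus B_1}\frac{|v(y)|^{l-1}}{|y|^{N+lm}}\,dy
= K_n^{-(l-1)}\,\rho_n^{lm}\int_{\mathbb{R}^N\setminus B_{\rho_n}}\frac{|u(z)|^{l-1}}{|z|^{N+lm}}\,dz
\leq K_n^{-(l-1)}\bigl(C+\rho_n^{lm}\bigr),
\]
which is bounded by a fixed constant $C>1$ (coming from integrating $|z|^{-N-lm}$ over the annulus $B_3\setminus B_{\rho_n}$) rather than by $1$. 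To make Proposition~\ref{prop:improve2} literally applicable you must include the tail quantities $\mathrm{Tail}_{p-1,sp}(u;0,\rho_n)$ and $\mathrm{Tail}_{q-1,tq}(u;0,\rho_n)$ in the normalizing constant $K_n$, exactly as the paper's $M_i$ does. This is a harmless modification: $K_n$ is still comparable (up to a constant depending on $N,s,p,t,q,\e$) to $1+[v_n]_{C^{\gamma_n}(B_1)}$, so the polynomial tracking of $A$-dependence is unaffected.
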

\begin{proof} The idea is to start with Theorem \ref{teo:almostt} and then apply Proposition \ref{prop:improve2} iteratively. More precisely, by Theorem \ref{teo:almostt} and a simple rescaling argument , we have
$$
 [u]_{C^\gamma (B_2)} \leq C(1+A)^d\quad \text{or}\quad  C(1+A^{-1})^d, 
$$
for 
$$
\gamma = \frac12 \left(t+\max\left(\frac{tq-2}{q-2},0\right)\right).
$$
where $C=C(N,s,p,t,q)>0$ and $d=d(N,t,q)\geq 1$, and the first estimate only holds when $sp\geq tq$, while the second one always holds true.
Therefore, the function 
$$
\tilde u(x)=\frac{u(2x)}{\|u\|_{L^\infty(B_{2})}+ \mathrm{Tail}_{p-1,s\,p}(u;0,2)+\mathrm{Tail}_{q-1,t\,q}(u;0,2)+[u]_{C^\frac{tq-2}{q-2}(B_{2})}}
$$
satisfies the assumptions of Proposition \ref{prop:improve2} with
$$
(-\Delta_p)^s \tilde u+\tilde{A} (-\Delta_q)^t \tilde u=0 \text{ in } B_2, 
$$
where
$$
\tilde A=2^{sp-tq}(\|u\|_{L^\infty(B_{2})}+ \mathrm{Tail}_{p-1,s\,p}(u;0,2)+\mathrm{Tail}_{q-1,t\,q}(u;0,2)+[u]_{C^\frac{tq-2}{q-2}(B_{2})})^{q-p}A\leq CK^{q-p}A,
$$
where 
$$
K=C(1+A)^d\quad \text{or}\quad  C(1+A^{-1})^d, \quad C=C(N,s,p,t,q)>0.
$$
Now fix $\e\in(0,\Gamma)$ such that $\e<\frac{2t}{q-1}$\footnote{ We remark that it is enough to prove the result for small $\e$.} and define
$$
\gamma_0=\frac12 \left(t+\max\left(\frac{tq-2}{q-2},0\right)\right),\qquad \gamma_{i+1}=sp-\gamma_i(p-2)-\frac{\e(p-1)}{2}.
$$
Then $\gamma_0<\frac{sp}{p-1}-\frac{\e}{2}$  and  $\gamma_i$ is an increasing sequence and $\gamma_i\to \frac{sp}{p-1}-\frac{\e}{2}$, as $i\to\infty$.  Define also
$$
v_i(x)=\tilde u(2^{-i} x), \quad w_i=\frac{v_i}{M_i}
$$
where
\[\begin{split}
M_i &=  1+ \|\tilde u\|_{L^\infty(B_{2^{-i}})}+ \mathrm{Tail}_{p-1,s\,p}(\tilde u;0,2^{-i})+\mathrm{Tail}_{q-1,t\,q}(\tilde u;0,2^{-i})+2^{-i\gamma_i}[\tilde u]_{C^{\gamma_i}(B_{2^{-i}})}\\
&=1+\|v_i\|_{L^\infty(B_{1})}+ \mathrm{Tail}_{p-1,s\,p}(v_i;0,1)+\mathrm{Tail}_{q-1,t\,q}(v_i;0,1)+[v_i]_{C^{\gamma_i}(B_{1})}\\
&\leq  C(i)(1+[v_i]_{C^{\gamma_i}(B_{1})}).
\end{split}
\]
Note that $w_i$ satisfies
$$
 (-\Delta_p)^s w_i+2^{-i(sp-tq)}M_i^{q-p}\tilde A(-\Delta_q)^t w_i=0\text{ in }B_2.
$$
 We may now find  $i_\infty=i_\infty(s,p,t,q,\e)\in\mathbb{N}$ so that $\gamma_{i_\infty}\geq \Gamma-\e$ and $\gamma_{i_\infty-1}<\Gamma-\e$. Clearly, $M_i\leq C(s,p,t,q,\e)(1+[v_i]_{C^{\gamma_i}(B_{1})}))$ for $0\leq i\leq i_\infty$. Now we apply Proposition \ref{prop:improve2} to $w_i$ successively with $\gamma=\gamma_i$ and $\e$ replaced by $\frac{\e(p-1)}{2}$ and obtain\footnote{Note that in the case $\Gamma=1$, we have $\gamma_{i_\infty-1}<1-\e$ so that Proposition \ref{prop:improve2} applied with $\gamma=\gamma_{i_\infty-2}$ yields the exponent 
$$
\min(\gamma_{i_\infty-1},1-\e (p-1)/2)=\gamma_{i_\infty-1},
$$
since $\gamma_{i_\infty-1}<1-\e$. Therefore, the iteration scheme is intact up this step.}
\begin{equation}
2^{\gamma_1}[v_1]_{C^{\gamma_1}(B_1)}=[v_0]_{C^{\gamma_1}(B_\frac12)}\leq C(1+\tilde A)^{d_0}\leq C(1+\tilde A^{d_0})\label{eq:est0}
\end{equation}
and
\begin{equation}\label{}
\begin{split}
2^{\gamma_{i+1}}[v_{i+1}]_{C^{\gamma_{i+1}}(B_1)}&=M_i[w_i]_{C^{\gamma_{i+1}}(B_\frac12)}\leq M_i C(1+\tilde AM_i^{q-p})^{d_i} 
\leq M_iC(1+\tilde A^{d_i}M_i^{(q-p){d_i}}) \nonumber \\
&=  C(M_i+\tilde A^{d_i} M_i^{(q-p){d_i}+1})\leq C (1+\tilde A^{d_i}) M_i^{(q-p){d_i}+1}\\
&\leq C(1+\tilde A^{d_i})(1+[v_i]_{C^{\gamma_i}(B_1)}^{(q-p){d_i}+1})   \nonumber \\
\end{split}
\end{equation}
for $i=1,\ldots i_{\infty-1}$, and finally
\[\begin{split}
 2^{\Gamma-\e}[v_{i_{\infty}}]_{C^{\Gamma-\e}(B_\frac12)}&\leq [v_{i_{\infty}-1}]_{C^{\min(\gamma_{i_{\infty}},1-\frac{\e(p-1)}{2})}(B_\frac12)}=M_{i_{\infty}-1}[w_{i_{\infty}-1}]_{C^{\min(\gamma_{i_{\infty}},1-\frac{\e(p-1)}{2})}(B_\frac12)}  \nonumber \\
 &\leq M_{i_{\infty}-1}C(1+\tilde AM_{i_{\infty}-1}^{q-p})^{d_{i_\infty-1}} \leq C(1+\tilde A^{d_{i_\infty-1}})(1+[v_{i_\infty-1}]_{C^{\gamma_{i_\infty-1}}(B_1)}^{(q-p){d_{i_\infty-1}}+1})  \nonumber ,
\end{split}
\]
where $d_i$ is the exponent appearing when applying Proposition \ref{prop:improve2}, which may depend on $\gamma_i$. In the above chain of inequalities, all constants depend on $N,s,p,t,q$ and $\e$. This have been omitted in order to keep the readability. Here we have used that $M_i\geq 1$, $d_i\geq 1$ and the convexity of the involved power functions. We note that in the scheme above, if we suppose that 
\begin{equation}
\label{eq:indhypo}
[v_i]_{C^{\gamma_i}(B_1)}\leq C(1+\tilde A^\alpha), \quad \alpha\geq  \max_j d_{j}
\end{equation}
then it follows that 
\begin{equation}
\label{eq:schemealpha}
\begin{split}
[v_{i+1}]_{C^{\gamma_{i+1}}(B_1)} &\leq C(1+\tilde A^{d_i})(1+[v_i]_{C^{\gamma_i}(B_1)}^{(q-p){d_i}+1})\\
&\leq C(1+\tilde A^{d_i})(1+(1+\tilde A^\alpha)^{(q-p){d_i}+1})\\
&\leq C(1+\tilde A^{d_i}+(1+\tilde A^\alpha)^{(q-p){d_i}+1})(1+\tilde A^{d_i})\\
&\leq C(1+\tilde A^{\alpha}+(1+\tilde A^\alpha)^{(q-p){d_i}+1})(1+\tilde A^\alpha)\\
&\leq C(1+(1+\tilde A^\alpha)^{(q-p){d_i}+2})\\
&\leq C(1+\tilde A^{\alpha((q-p){d_i}+2)})\\
&\leq C (1+\tilde A^{\alpha((q-p){\alpha}+2)}).
\end{split}
\end{equation}
Again, we have used convexity and the fact that if $\tilde A\leq 1$ then this is trivially true and otherwise $\tilde A^x$ is increasing in $x$. From estimate \eqref{eq:est0} we can assure that \eqref{eq:indhypo} holds true for $i=1$. Hence, by induction over \eqref{eq:schemealpha} we obtain
$$
[\tilde u]_{C^{\Gamma-\e}(B_{2^{-i_\infty-1}})}=2^{i_\infty(\Gamma-\e)}[v_{i_{\infty}}]_{C^{\Gamma-\e}(B_\frac12)}\leq C(1+\tilde A^\beta), 
$$
where $2\leq \beta = \beta(N,s,p,t,q,\e)$ and $C=C(N,s,p,t,q,\e)>0$. After scaling back to $u$ and using that $\tilde A\leq CK^{q-p}A$ with 
$$
K=C(1+A)^d\quad \text{or}\quad  C(1+A^{-1})^d
$$
this implies the desired result with $\sigma = 2^{-i_\infty}$, noting that $i_\infty$ depends on $s,p,t,q$ and $\e$.
\end{proof}

\subsection{Proof of the main result: Theorem \ref{teo:1higher}}
By a simple rescaling we recover the full statement of the main theorem of this part of the paper. As in the previous results of this section, we will have two alternatives on how the dependence of $A$ will appear.

\begin{proof} By \cite[Proposition 3.3]{GKS}, we have $u\in L^\infty_{\mathrm{loc}}(\Omega)$. We assume that $x_0=0$. The proof for $x_0\neq 0$ follows similarly. Let  $$\mathcal{M}_r=\|u\|_{L^\infty(B_r)}+\mathrm{Tail}_{p-1,sp}(u;0,r)+\mathrm{Tail}_{q-1,tq}(u;0,r)+1$$ and define
$$
w(x)=\frac{u(rx)}{\mathcal{M}_r}.
$$
Then $w$ satisfies
$$
(-\Delta_p)^s w+r^{sp-tq}\mathcal{M}_r^{q-p}(-\Delta_q)^t w=0\text{ in }B_2.
$$
Theorem \ref{teo:localalmost} implies in the case $sp\geq tq$ directly that
$$
[w]_{C^{\Gamma-\e}(B_\sigma)}\leq C\left(1+(r^{sp-tq}\mathcal{M}_r^{q-p})^d\right)\leq C\left(1+r^{sp-tq}\mathcal{M}_r^{q-p}\right)^d,
$$
where 
$C=C(N,s,p,t,q,\e)>0$, $d=d(N,s,p,t,q,\e) \geq 1$ and $\sigma=\sigma(s,p,t,q,\e)$. When {$sp<tq$}, it is implied as well since $0<r\leq 1\leq \mathcal{M}_r$ and $q\geq p$ implies 
$$r^{tq-sp}\mathcal{M}_r^{p-q}\leq 1.
$$
Going back to $u$,  we obtain  the desired result.
\end{proof}

\section{ Case III: $p,q\in(1,2]$}\label{sec:III}
In this section, we assume that $p,q\in(1,2]$ and $s,t\in(0,1)$ satisfy $\frac{sp}{p-1}\geq \frac{tq}{q-1}$ unless stated otherwise.
\subsection{Improved Besov and improved H\"older regularity}
\begin{prop}
\label{prop:improveIII}(Improved Besov regularity)
Assume that $A>0$ and 
$u\in X(B_2)$ is a local weak solution of equation \eqref{eq:maineqnew} in $B_2$  satsifying 
\begin{equation}
\label{bounds1}
\|u\|_{L^\infty(B_1)}\leq 1, \qquad \int_{\mathbb{R}^N\setminus B_1} \frac{|u(y)|^{l-1}}{|y|^{N+lm}}\,dy\leq 1 \qquad \mbox{ and }\quad  [u]_{C^\gamma(B_1)}\,  \leq 1,
\end{equation}
for some $\gamma\in [0,1)$, where $(l,m)\in\{(p,s),(q,t)\}$. Then for $\alpha\in [0,1)$, 
$1\leq\mu <\infty$, $0<h_0<\frac{1}{10}$ and $R$ such that $4\,h_0<R\le 1-5\,h_0$,  
we have
\begin{equation*}
\begin{split}
\sup_{0<|h|< h_0}\left\|\frac{\delta^2_h u}{|h|^{\frac{sp-\gamma(p-2)+\alpha\mu}{\mu+1}}}\right\|_{L^{\mu+1}(B_{R-4\,h_0})}^{{\mu}+1}
&\leq C\Big({A}+1\Big)\,\left(\sup_{0<|h|< h_0}\left\|\frac{\delta^2_h u }{|h|^\alpha}\right\|_{L^{\mu}(B_{R+4h_0})}^{\mu}+1\right),
\end{split}
\end{equation*}
for some positive constant $C=C(N,s,p,t,q,\mu,\gamma,h_0)$.
\end{prop}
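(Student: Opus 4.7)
The strategy mirrors the proof of Proposition~\ref{prop:improve3}, but now both fractional operators are subquadratic, so the subquadratic monotonicity machinery of \cite{GL} is applied to each of them, rather than combining sub- and superquadratic schemes. As in that earlier proof, I would test the weak form of \eqref{eq:maineqnew} with $\varphi=J_{\beta+1}(\delta_h u/|h|^\theta)\,\eta^q$, where $\eta\in C_0^\infty(B_R)$ is the cutoff from \eqref{eq2tsuper:etap} and $\beta\geq 2$, $1+\theta\beta\geq 0$, testing also with the translate $\varphi_{-h}$ and then subtracting and changing variables. This yields the usual splitting
$$
\sum_{(l,m,k)\in\{(p,s,1),(q,t,A)\}}(\mathcal{I}_{1,l,m,k}+\mathcal{I}_{2,l,m,k}+\mathcal{I}_{3,l,m,k})=0.
$$

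For the $(p,s)$ contribution, I would reuse verbatim the estimates from Proposition~\ref{prop:improve3}, since $p\in(1,2]$ and the assumption $[u]_{C^\gamma(B_1)}\leq 1$ is still available to control the negative power of $|u(x)-u(y)|$ in the subquadratic monotonicity formula. Setting $2\sigma=sp-\gamma(p-2)$, this produces
$$
\mathcal{I}_{1,p,s,1}\geq c\left[\frac{|\delta_h u|^{(\beta-1)/2}\,\delta_h u}{|h|^{(1+\theta\beta)/2}}\,\eta^{q/2}\right]^{2}_{W^{\sigma,2}(B_R)}-C\int_{B_R}\frac{|\delta_h u|^\beta}{|h|^{1+\theta\beta}}\,dx,
$$
together with analogous upper bounds for $|\mathcal{I}_{2,p,s,1}|+|\mathcal{I}_{3,p,s,1}|$ by the same error integral.

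The $(q,t)$ contribution is now also handled by the GL subquadratic scheme, since $q\in(1,2]$. Rather than keeping the positive Besov seminorm that this scheme produces, I would simply discard it and retain only the lower bound
$$
\mathcal{I}_{1,q,t,A}\geq -CA\int_{B_R}\frac{|\delta_h u|^\beta}{|h|^{1+\theta\beta}}\,dx,
$$
using $\|u\|_{L^\infty(B_1)}\leq 1$ to absorb extra factors of $|\delta_h u|$ in the leftover terms $|\delta_h u|^{q+\beta-1}$ and $|\delta_h u|^{\beta+1}$. The tail pieces $\mathcal{I}_{2,q,t,A}$ and $\mathcal{I}_{3,q,t,A}$ give upper bounds of the same form. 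Crucially, no positive lower bound on $\gamma$ of the form $(tq-2)/(q-2)$ is needed here: the subquadratic monotonicity automatically produces a nonnegative kernel, whereas the superquadratic scheme of \cite{BLS} used in Case~II required $C^\gamma$ regularity to overcome a kernel singularity (which is precisely why that scheme is unavailable when $q\leq 2$).

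Substituting all these bounds into the identity gives
$$
\left[\frac{|\delta_h u|^{(\beta-1)/2}\,\delta_h u}{|h|^{(1+\theta\beta)/2}}\,\eta^{q/2}\right]^{2}_{W^{\sigma,2}(B_R)}\leq C(A+1)\int_{B_R}\frac{|\delta_h u|^\beta}{|h|^{1+\theta\beta}}\,dx.
$$
From here I would proceed as in the passage from (3.27) to (3.3) of \cite{GL}: convert the left-hand Sobolev seminorm into a second-order difference quotient of $u$, then choose $\beta=\mu$ and $\theta=\alpha-1/\mu$ so that $(1+\theta\beta)/\beta=\alpha$ and the exponent on the left becomes $(2\sigma+1+\theta\beta)/(\beta+1)=(sp-\gamma(p-2)+\alpha\mu)/(\mu+1)$, and finally invoke \cite[Lemma 2.6]{BLS} to upgrade the first-order quotient $|\delta_h u|/|h|^\alpha$ on the right to the second-order one $|\delta_h^2 u|/|h|^\alpha$ on a slightly larger ball. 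The principal bookkeeping obstacle is to verify that every error term coming from both operators is absorbable by the single good seminorm produced by the $(p,s)$ operator; the subquadratic character of $q$ is what makes this possible without imposing any additional Hölder structure beyond $\gamma\in[0,1)$.
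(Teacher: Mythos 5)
Your proposal follows essentially the same route as the paper: both test with a difference-quotient test function $J_{\mu+1}(\delta_h u/|h|^\theta)$ times a cutoff, apply the subquadratic monotonicity scheme of \cite{GL} to \emph{both} operators (producing Besov seminorms with exponent $\sigma_{l,m}=\tfrac{1}{2}(lm-\gamma(l-2))$), discard the $(q,t)$-Besov term, and use the $C^\gamma$ hypothesis to upgrade the exponent of the $(p,s)$-Besov seminorm before passing to second-order difference quotients via \cite[Lemma 2.6]{BLS}. Your observation that no lower bound on $\gamma$ is needed --- because the kernel-singularity issue that drives the condition $\gamma>\frac{tq-2}{q-2}$ in the superquadratic scheme simply does not arise when $q\le 2$ --- is correct and is precisely why the statement here allows $\gamma\in[0,1)$, and your choice $\beta=\mu$, $\theta=\alpha-1/\mu$ is the right one (in fact the paper's own written choice ``$1+\theta\mu=\alpha$'' is a typo for $(1+\theta\mu)/\mu=\alpha$).

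One small technical point that should be corrected: you propose testing with $\eta^q$ and carrying $\eta^{q/2}$ inside the resulting Sobolev seminorm, imported from Proposition~\ref{prop:improvesub} / Proposition~\ref{prop:improve3}. In those cases $q\ge 2$, so $\eta^{q/2}$ is Lipschitz and the factorization $\eta^q=(\eta^{q/2})^2$ is harmless. In the present case $q\le 2$, and $\eta^{q/2}$ is only H\"older-$\frac{q}{2}$ near $\partial\{\eta>0\}$, which spoils the error estimates that rely on $|\eta^{q/2}(x)-\eta^{q/2}(y)|\lesssim|x-y|$. The paper uses $\eta^2$ instead (so that $\eta$, which is genuinely Lipschitz, appears in the Besov seminorm), which is the natural choice when both exponents are subquadratic; with that substitution your argument goes through without further change.
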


\begin{proof} 
Define 
$
r=R-4h_0$ and recall $ d\mu_{l,m}=\frac{dx dy}{|x-y|^{N+lm}}.$ Take $\varphi\in W^{s,p}(B_R)\cap W^{t,q}(B_R)$ vanishing outside $B_{\frac{R+r}{2}}$ and  $0<|h|<h_0$. Testing \eqref{wksol} with $\varphi$ and $\varphi_{-h}$ and performing a change of variable yields 
\begin{equation}
\label{differentiated}
\frac1h \sum_{(l,m,k)\in\{(p,s,1),(q,t,A)\}}\iint_{\mathbb{R}^N\times\mathbb{R}^N} k\Big(J_l(u_h(x)-u_h(y))-J_l(u(x)-u(y))\Big)\,\Big(\varphi(x)-\varphi(y)\Big)\,d\mu_{l,m}=0.
\end{equation}
 Take $\eta\in C^\infty_0(B_R)$ is such that
\[
0\le \eta\le 1\text{ in }B_R,\qquad \eta\equiv 1 \text{ in }B_r,\qquad \eta\equiv 0 \mbox{ in } \mathbb{R}^N\setminus B_{\frac{R+r}{2}},\qquad |\nabla \eta|\le \frac{C}{R-r}=\frac{C}{4\,h_0},
\]
for some constant $C=C(N)>0$.
 Inserting 
\[
\varphi=J_{\mu+1}\left(\frac{\delta_h u}{|h|^{\theta}}\right)\,\eta^2,
\]
 in \eqref{differentiated} yields 
\begin{equation*}
\begin{split}
&\sum_{(l,m,k)\in\{(p,s,1),(q,t,A)\}}\iint_{\mathbb{R}^N\times\mathbb{R}^N} k\frac{\Big(J_l(u_h(x)-u_h(y))-J_l(u(x)-u(y))\Big)}{|h|^{1+\theta\,\mu}}\\
&\times\Big(J_{\mu+1}(u_h(x)-u(x))\,\eta(x)^2-J_{\mu+1}(u_h(y)-u(y))\,\eta(y)^2\Big)\,d\mu_{l,m}=0,
\end{split}
\end{equation*}
that is
\begin{equation}
\label{IIIlmk}
\sum_{(l,m,k)\in\{(p,s,1),(q,t,A)\}}(\mathcal{I}_{1,l,m,k}+\mathcal{I}_{2,l,m,k}+\mathcal{I}_{3,l,m,k})=0,
\end{equation}
where
\[
\begin{split}
\mathcal{I}_{1,l,m,k}:=\iint_{B_R\times B_R}& k\frac{\Big(J_l(u_h(x)-u_h(y))-J_l(u(x)-u(y))\Big)}{|h|^{1+\theta\,\mu}}\\
&\times\Big(J_{\mu+1}(u_h(x)-u(x))\,\eta(x)^2-J_{\mu+1}(u_h(y)-u(y))\,\eta(y)^2\Big)d\mu_{l,m},
\end{split}
\]
\[
\begin{split}
\mathcal{I}_{2,l,m,k}:=\iint_{B_\frac{R+r}{2}\times (\mathbb{R}^N\setminus B_R)}& k\frac{\Big(J_l(u_h(x)-u_h(y))-J_l(u(x)-u(y))\Big)}{|h|^{1+\theta\,\mu}}\\
&\times J_{\mu+1}(u_h(x)-u(x))\,\eta(x)^2\,d\mu_{l,m},
\end{split}
\]
and
\[
\begin{split}
\mathcal{I}_{3,l,m,k}:=-\iint_{(\mathbb{R}^N\setminus B_R)\times B_\frac{R+r}{2}}& k\frac{\Big(J_l(u_h(x)-u_h(y))-J_l(u(x)-u(y))\Big)}{|h|^{1+\theta\,\mu}}\\
&\times J_{\mu+1}(u_h(y)-u(y))\,\eta(y)^2\,d\mu_{l,m},
\end{split}
\]
where we have used that $\eta$ vanishes identically outside $B_{\frac{R+r}{2}}$. Let $(l,m,k)\in \{(p,s,1),(q,t,A)\}$. Then taking into account \eqref{bounds1} and following \cite[Step 1, Proposition 3.1]{GL}, we obtain the following lower bound for $\mathcal{I}_{1,l,m}$, with $2\sigma_{l,m} = lm-\gamma(l-2)$
\begin{equation}\label{III1lmk}
\begin{split}
\mathcal{I}_{1,l,m,k}&\geq {ck}\left[\frac{|\delta_h u|^\frac{\mu-1}{2}\,\delta_h u}{|h|^\frac{1+\theta\,\mu}{2}}\,\eta\right]^2_{W^{\sigma_{l,m},2}(B_R)}-Ck\int_{B_R} \frac{|\delta_h u(x)|^{l+\mu-1}}{|h|^{1+\theta\,\mu}} dx\\
& -Ck\,\int_{B_R}\, \frac{|\delta_h u(x)|^{\mu+1}}{|h|^{1+\theta\,\mu}}dx,
\end{split}
\end{equation}
where $c=c(l,\mu)>0$ and $C=C(N,l,m,\mu,h_0)>0$. Moreover, following \cite[Step 2, Proposition 3.1]{GL}, we obtain 
\begin{equation}
\label{III2lmk}
|\mathcal{I}_{2,l,m,k}|\leq Ck\int_{B_R}\frac{|\delta_h u(x)|^{\mu}}{|h|^{1+\theta\,\mu}} dx.
\end{equation}
{Similarly, we get
\begin{equation}
\label{III3lmk}
|\mathcal{I}_{3,l,m,k}|\leq Ck\int_{B_R}\frac{|\delta_h u(x)|^{\mu}}{|h|^{1+\theta\,\mu}} dx,
\end{equation}
where $C=C(N,l,m,h_0)>0$ is some constant.} Taking this into account and choosing $1+\theta\mu=\alpha$, we may combine the above estimates of \eqref{III1lmk}, \eqref{III2lmk} and \eqref{III3lmk} in \eqref{IIIlmk} and proceed along the lines of \cite[Step 3, Proposition 3.1]{GL} to obtain the desired result.
\end{proof}
We can now iterate the improved Besov-type regularity to obtain an improved H\"older regularity.  The proof is almost identical withe one of Proposition 3.2 of \cite{GL}. We provide the details for completeness.
\begin{prop}
\label{prop:improve2III}(Improved H\"older regularity)
Let $A>0$ and $u\in X(B_2)$ be a local weak solution of equation \eqref{eq:maineqnew} in $B_2$.
Suppose that 
\begin{equation}
\label{boundsimp}
\|u\|_{L^\infty(B_1)}\leq 1, \qquad \int_{\mathbb{R}^N\setminus B_1} \frac{|u(y)|^{l-1}}{|y|^{N+l\,m}}\,dy\leq 1\quad  \mbox{ and }\quad [u]_{C^\gamma(B_1)}\leq 1,
\end{equation} 
where $(l,m)\in\{(p,s), (q,t)\}$ and $\gamma\in[0,1)$. Let $\tau =\min (sp-\gamma(p-2),1)$. Then for any $\e\in (0,\tau)$, there exist constants $d=d(N,s,p,\e,\gamma)\geq 1$ and $C=C(N,s,p,t,q,\e,\gamma)>0$ such that
$$
[u]_{C^{\tau-\e}(B_\frac12)}\leq C(A+1)^d.
$$
\end{prop}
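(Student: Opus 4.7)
The plan is to follow the iteration scheme of Proposition \ref{prop:improve2} essentially verbatim, applying Proposition \ref{prop:improveIII} at each step in place of Proposition \ref{prop:improve3}. Since both operators are subquadratic in the current regime $p,q\in(1,2]$, the single improved Besov statement \ref{prop:improveIII} already captures the contribution of both kernels with a uniform exponent gain $(sp-\gamma(p-2)+\alpha\mu)/(\mu+1)$, so there is no need to separate the analysis as in Case II.

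First I would fix $\e\in(0,\tau)$ and choose $\lambda\geq 1$ (depending on $N,s,p,\e,\gamma$) large enough so that
\[
\tau-\frac{\e}{2}-\frac{N}{\lambda}>\tau-\e,
\]
and introduce the sequence
\[
\alpha_0=0,\qquad \alpha_{i+1}=\frac{sp-\gamma(p-2)+\alpha_i\,\lambda}{\lambda+1},
\]
which is strictly increasing with limit $sp-\gamma(p-2)\ge\tau$. I select $i_\infty\ge 1$ such that $\alpha_{i_\infty-1}<\tau-\frac{\e}{2}\leq\alpha_{i_\infty}$ and set $h_0=\frac{1}{64\,i_\infty}$, $R_i=\frac{7}{8}-4(2i+1)h_0$, so that $R_0+4h_0=\frac{7}{8}$ and $R_{i_\infty-1}-4h_0=\frac{3}{4}$. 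I would then apply Proposition \ref{prop:improveIII} with $R=R_i$, $\mu=\lambda$ and $\alpha=\alpha_i$, producing at each step a bound of the form
\[
\sup_{0<|h|<h_0}\left\|\frac{\delta^2_h u}{|h|^{\alpha_{i+1}}}\right\|_{L^\lambda(B_{R_{i+1}+4h_0})}\leq C(A+1)\,\sup_{0<|h|<h_0}\left(\left\|\frac{\delta^2_h u}{|h|^{\alpha_i}}\right\|_{L^\lambda(B_{R_i+4h_0})}+1\right),
\]
where the base step uses the crude estimate $\|\delta^2_h u\|_{L^\lambda(B_{7/8})}\leq 3\|u\|_{L^\infty(B_1)}\leq 3$ from the normalization \eqref{boundsimp}. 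Iterating $i_\infty$ times yields
\[
\sup_{0<|h|<h_0}\left\|\frac{\delta^2_h u}{|h|^{\tau-\e/2}}\right\|_{L^\lambda(B_{3/4})}\leq C(A+1)^{i_\infty}.
\]

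Finally, I would localize with a cutoff $\chi\in C^\infty_0(B_{5/8})$ satisfying $\chi\equiv 1$ on $B_{1/2}$ and $|\nabla\chi|,|D^2\chi|\leq C$, and use the product rule
\[
\delta^2_h(u\chi)=\chi_{2h}\,\delta^2_h u+2\,\delta_h u\,\delta_h\chi_h+u\,\delta^2_h\chi
\]
to conclude $[u\chi]_{\mathcal{B}_\infty^{\tau-\e/2,\lambda}(\mathbb{R}^N)}\leq C(A+1)^{i_\infty}$. Lemma \ref{emb1} then upgrades this to the same bound for $[u\chi]_{\mathcal{N}_\infty^{\tau-\e/2,\lambda}(\mathbb{R}^N)}$, and Theorem \ref{emb2} with $\beta=\tau-\e/2$, $\alpha=\tau-\e$ and $q=\lambda$ (justified by our choice of $\lambda$) yields
\[
[u]_{C^{\tau-\e}(B_{1/2})}\leq C(A+1)^d,
\]
for some $d=d(N,s,p,\e,\gamma)\geq 1$. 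The only delicate bookkeeping point, as in Proposition \ref{prop:improve2}, is to ensure that each intermediate Besov quantity entering the iteration is finite; this propagates from the $L^\infty$ bound at the base step through each application of Proposition \ref{prop:improveIII}. No new analytic idea is required beyond the transcription of the Case II argument to the fully subquadratic setting.
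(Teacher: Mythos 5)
Your proposal matches the paper's proof of Proposition \ref{prop:improve2III} essentially line by line: the same recursion $\alpha_{i+1}=\frac{sp-\gamma(p-2)+\alpha_i\mu}{\mu+1}$ from $\alpha_0=0$, the same choice of $i_\infty$, $h_0$, and $R_i$, the same base step $\|\delta^2_h u\|_{L^\mu(B_{7/8})}\leq 3$, the same localization by $\chi$, and the same passage through Lemma \ref{emb1} and Theorem \ref{emb2}. No meaningful difference beyond renaming the Lebesgue exponent.
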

\begin{proof}
 Pick  $0<\e<\tau$. We choose $\mu$ such that $\mu>\frac{2N}{\e}$, that is
\begin{equation}\label{mu}
 \tau-\frac{\e}{2}-\frac{N}{\mu}>\tau-\e>0.
\end{equation}
We define the sequence of exponents $\{\alpha_i\}_{i=0}^{i_\infty}$ as follows: 
\[
\alpha_0=0, \quad \alpha_i=\frac{sp-\gamma(p-2)+\alpha_{i-1} \mu}{\mu+1},\qquad i=0,\dots,i_\infty,
\]
where $i_\infty\geq 1$,  depending on $N,s,p,\gamma$ and $\e$,  is chosen such that 
\begin{equation}\label{mu1}
\alpha_{i_\infty-1}<\tau-\frac{\e}{2}\leq \alpha_{i_\infty}.
\end{equation}
 This is indeed possible as  $\alpha_i$ are increasing towards $sp-\gamma(p-2)$. We also  introduce 
$$
h_0=\frac{1}{64\,i_\infty},\qquad R_i=\frac{7}{8}-4\,(2i+1)\,h_0=\frac{7}{8}-\frac{2i+1}{16\,i_\infty},\qquad \mbox{ for } i=0,\dots,i_\infty.
$$
 Note that this implies 
\[
R_0+4\,h_0=\frac{7}{8}\qquad \mbox{ and }\qquad R_{i_\infty-1}-4\,h_0=\frac{3}{4}.
\] 
Taking into account \eqref{boundsimp}, we apply Proposition \ref{prop:improveIII} and with $R=R_i$ and observing that $R_i-4\,h_0=R_{i+1}+4\,h_0$,
we  arrive at the following chain of  inequalities
\[
\left\{\begin{array}{rcll}
\sup\limits_{0<|h|< h_0}\left\|\dfrac{\delta^2_h u}{|h|^{\alpha_1}}\right\|_{L^{\mu}(B_{R_1+4h_0})}&\leq& C(A+1)\,\sup\limits_{0<|h|< h_0}\left(\left\|\delta^2_h u \right\|_{L^\mu(B_{\frac{7}{8}})}+1\right)\\
&&&\\
\sup\limits_{0<|h|\leq h_0}\left\|\dfrac{\delta^2_h u}{|h|^{\alpha_{i+1}}}\right\|_{L^{\mu}(B_{R_{i+1}+4h_0})}&\leq& C(A+1)\,\sup\limits_{0<|h|< h_0}\left(\left\|\dfrac{\delta^2_h u }{|h|^{\alpha_i}}\right\|_{L^{\mu}(B_{R_i+4h_0})}+1\right),
\end{array}
\right.
\]
for $i=1,\ldots,i_\infty-2$, and finally
\begin{align*}
\sup_{0<|h|< {h_0}}\left\|\frac{\delta^2_h u}{|h|^{\alpha_{i_\infty}}}\right\|_{L^{\mu}(B_{\frac{3}{4}})}&=\sup_{0<|h|< h_0}\left\|\frac{\delta^2_h u}{|h|^{\alpha_{i_\infty}}}\right\|_{L^{\mu}(B_{R_{i_\infty-1}-4h_0})}\\
&\leq C(A+1)\sup_{0<|h|< h_0}\left(\left\|\frac{\delta^2_h u }{|h|^{\alpha_{i_\infty-1}}}\right\|_{L^{\mu}(B_{R_{i_\infty-1}+4h_0})}+1\right).
\end{align*}
Here $C=C(N,s,t,p,q,\e,\gamma)>0$. In addition,
\begin{align*}\label{eq:1sttofrac}
\sup\limits_{0<|h|< h_0}\left\|\delta^2_h u \right\|_{L^{\mu}(B_{\frac{7}{8}})}&\leq {3}\|u\|_{L^{\infty}(B_{1})}\leq {3}.
\end{align*}
Hence, the  above chain of inequalities implies 
\begin{equation}\label{neqn11}
\sup_{0<|h|< {h_0}}\left\|\frac{\delta^2_h u}{|h|^{\alpha_{i_\infty}}}\right\|_{L^{q}(B_{\frac{3}{4}})}\leq C(A+1)^{i_\infty},
\end{equation}
where $C=C(N,s,t,p,q,\e,\gamma)>0$.  Since $\alpha_{i_\infty}\geq \tau-\frac{\e}{2}$ by \eqref{mu1}, estimate \eqref{neqn11} implies  
\begin{equation}\label{neqn12}
\sup_{0<|h|< {h_0}}\left\|\frac{\delta^2_h u}{|h|^{\tau-\frac{\e}{2}}}\right\|_{L^{q}(B_{\frac{3}{4}})}\leq C(A+1)^{i_\infty},
\end{equation}
where $C=C(N,s,t,p,q,\e,\gamma)>0$.
 Let  $\chi\in C_0^\infty(B_{\frac{5}{8}})$ such that
$$
0\leq \chi\leq 1\text{ in }B_\frac{5}{8}, \qquad |\nabla \chi|\leq C,\qquad |D^2 \chi|\leq C \text{ in }B_\frac{5}{8}\qquad\text{ and }\qquad \chi \equiv 1 \text{ in }B_{\frac{1}{2}},
$$
for some constant $C=C(N)>0$.  In particular, for every $|h| > 0$, we have
$$
\frac{|\delta_h\chi|}{|h|}\leq C,\qquad \frac{|\delta^2_h\chi|}{{|h|^2}}\leq C,
$$
for some constant $C=C(N)>0$.  We also note that 
$$
\delta^2_h (u\,\chi)=\chi_{2h}\,\delta^2_h u+2\,\delta_h u\, \delta_h \chi_h+u\,\delta^2_h\chi.
$$
Hence,  the properties of $\chi$ combined with \eqref{neqn12} imply 
\begin{equation}\label{Neq}
\begin{split}
[u\,\chi]_{\mathcal{B}^{\tau-\frac{\e}{2},\mu}_\infty(\mathbb{R}^N)}&\leq C\left(\sup_{0<|h|< {h_0}} \left\|\frac{\delta^2_h (u\,\chi)}{|h|^{\tau-\frac{\e}{2}}}\right\|_{L^{\mu}(\mathbb{R}^N)}+1\right)\\&\leq C\sup_{0<|h|< {h_0}} \,\left(\left\|\frac{\chi_{2h}\,\delta^2_h u}{|h|^{\tau-\frac{\e}{2}}}\right\|_{L^{\mu}(\mathbb{R}^N)}+\left\|\frac{\delta_h u\,\delta_h\chi}{|h|^{\tau-\frac{\e}{2}}}\right\|_{L^{\mu}(\mathbb{R}^N)}+\left\|\frac{u\,\delta^2_h\chi}{|h|^{\tau-\frac{\e}{2}}}\right\|_{L^{\mu}(\mathbb{R}^N)}+1\right)\\
&\leq C\sup_{0<|h|< {h_0}} \,\left(\left\|\frac{\delta^2_h u}{|h|^{\tau-\frac{\e}{2}}}\right\|_{L^{\mu}(B_{\frac{5}{8}+2\,h_0})}+\|\delta_h u\|_{L^{\mu}(B_{\frac{5}{8}+h_0})}+\|u\|_{L^{\mu}(B_{\frac{5}{8}+2h_0})}+1\right) \\
&\leq C\sup_{0<|h|< {h_0}} \,\left(\left\|\frac{\delta^2_h u}{|h|^{\tau-\frac{\e}{2}}}\right\|_{L^{\mu}(B_{\frac{3}{4}})}+\|u\|_{L^{\mu}(B_{\frac{3}{4}})}+1\right)\leq C(A+1)^{i_\infty},
\end{split}
\end{equation}
where $C=C(N,s,t,p,q,\e,\gamma)>0$.
Thus by \eqref{Neq} and Lemma \ref{emb1}, we have
\begin{equation*}\label{Neq2}
[u\,\chi]_{\mathcal{N}_\infty^{\tau-\frac{\e}{2},\mu}(\mathbb{R}^N)}\leq C(N,\e)(\,[u\,\chi]_{\mathcal{B}_\infty^{\tau-\frac{\e}{2},\mu}(\mathbb{R}^N)}+1)\leq C(A+1)^{i_\infty},
\end{equation*}
where $C=(N,s,t,p,q,\e,\gamma)>0$.
 We note that by the choice of $\mu$ in \eqref{mu}, we may apply Theorem \ref{emb2} with $\beta=\tau-\frac{\e}{2}$, $\alpha=\tau-\e$ and $q=\mu$ to obtain  
\begin{equation*}
    \begin{split}
[u]_{C^{\tau-\e}(B_{\frac{1}{2}})}&= [u\,\chi]_{C^{\tau-\e}(B_{\frac{1}{2}})}\\
&\leq C\left([u\,\chi]_{\mathcal{N}_\infty^{\tau-\frac{\e}{2},\mu}(\mathbb{R}^N)}\right)^{\frac{(\tau-\e)\,\mu+N}{(\tau-\frac{\e}{2})\,\mu}}\,\left(\|u\,\chi\|_{L^{\mu}(\mathbb{R}^N)}\right)^\frac{\frac{\mu\e}{2}-N}{(\tau-\frac{\e}{2})\,\mu}\\
&\leq C(A+1)^\frac{i_\infty((\tau-\e)\,\mu+N)}{(\tau-\frac{\e}{2})\,\mu},
\end{split}
\end{equation*}
where $C=C(N,s,t,p,q,\e,\gamma)>0$.
By defining $d=\frac{i_\infty((\tau-\e)\,\mu+N)}{(\tau-\frac{\e}{2})\,\mu}+1$,  this  concludes the proof.
\end{proof}
\subsection{Final H\"older regularity}
We now prove a normalized version of Theorem \ref{teo:1higher} by iterating the improved H\"older regularity  obtained  in Proposition \ref{prop:improve2III}.  The proof closely follows the proof \cite[Theorem 3.3]{GL}. Since some care is taken to control the invovled constants, we provide the details.  
\begin{Theorem}[Almost $\frac{sp}{p-1}$-regularity]
\label{teo:localalmostIII}
Suppose $A>0$ and $u\in X(B_2)$ is a local weak solution of equation \eqref{eq:maineqnew} in $B_2$
satisfying
\begin{equation}\label{newbound}
\|u\|_{L^\infty(B_1)}\leq 1\qquad \mbox{ and }\qquad \int_{\mathbb{R}^N\setminus B_1} \frac{|u(y)|^{l-1}}{|y|^{N+l\,m}}\,  dy\leq 1,
\end{equation}
where $(l,m)\in\{(p,s),(q,t)\}$.
Then for any $\e\in(0,\Gamma)$, there  is a  constant $\sigma=\sigma(\e,s,p)>0$ such that $u\in C^{\Gamma-\e}(B_\sigma)$.
 In particular, there are  constants $\beta=\beta(N,s,p,\e)\geq  1$ and $C=C(N,s,p,t,q,\e)>0$ such that
$$
[u]_{C^{\Gamma-\e}(B_\sigma)}\leq C(1+A^{\beta}).
$$
\end{Theorem}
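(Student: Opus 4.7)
The proof follows the strategy of Theorem \ref{teo:localalmost} in the fully symmetric subquadratic setting: the plan is to iterate Proposition \ref{prop:improve2III} starting from the trivial H\"older exponent $\gamma_0=0$ (admissible, since Proposition \ref{prop:improve2III} allows any $\gamma\in[0,1)$) and to improve the exponent step by step until it exceeds $\Gamma-\e$. Only positive powers of $A$ appear in the final bound because Proposition \ref{prop:improve2III} always produces a factor of $(A+1)^d$, reflecting the fact that both operators in Case III are subquadratic. In particular, no analog of the $A^{-1}$-branch from Case II arises.

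First, fix a small $\e_0=\e_0(s,p,\e)>0$ and set
\[
\gamma_0=0,\qquad \gamma_{i+1}=\min\bigl(sp-\gamma_i(p-2)-\e_0,\,1\bigr).
\]
Since $p\in(1,2]$, the map $\gamma\mapsto sp-\gamma(p-2)$ is a contraction with slope $2-p\in[0,1)$ and fixed point $(sp-\e_0)/(p-1)$, so $\gamma_i$ increases monotonically toward $\min\bigl((sp-\e_0)/(p-1),\,1\bigr)$, which is arbitrarily close to $\Gamma$. Choosing $i_\infty=i_\infty(N,s,p,\e)$ large enough guarantees $\gamma_{i_\infty}\ge\Gamma-\e$.

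For $0\le i\le i_\infty$, I would rescale by setting $v_i(x)=u(2^{-i}x)$ and normalize $w_i=v_i/M_i$, where
\[
M_i=1+\|v_i\|_{L^\infty(B_1)}+\mathrm{Tail}_{p-1,sp}(v_i;0,1)+\mathrm{Tail}_{q-1,tq}(v_i;0,1)+[v_i]_{C^{\gamma_i}(B_1)}.
\]
A direct scaling computation shows that $w_i$ satisfies the hypotheses of Proposition \ref{prop:improve2III} and is a weak solution of $(-\Delta_p)^s w_i+A_i(-\Delta_q)^t w_i=0$ in $B_2$ with $A_i=2^{-i(sp-tq)}M_i^{q-p}A$. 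Applying Proposition \ref{prop:improve2III} with $\gamma=\gamma_i$ and error $\e_0$ yields $[w_i]_{C^{\gamma_{i+1}}(B_{1/2})}\le C(1+A_i)^{d_i}$, and undoing the rescaling ($v_{i+1}(x)=v_i(x/2)$) gives the recursion
\[
M_{i+1}\le C\, M_i\,\bigl(1+A M_i^{q-p}\bigr)^{d_i},
\]
where $C$ and $d_i$ depend on $N,s,p,t,q,\e,\gamma_i$ but not on $A$. Using $M_i\ge 1$ and $|q-p|\le 1$ to absorb the factor $M_i^{q-p}$ by $M_i$, an induction on $i$ yields $M_i\le C(1+A^{\alpha_i})$ with exponents $\alpha_i$ satisfying the scalar recursion $\alpha_{i+1}\le \alpha_i+d_i(1+|q-p|\alpha_i)$. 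Setting $\sigma=2^{-i_\infty}$ and $\beta=\alpha_{i_\infty}$ and recalling $[u]_{C^{\Gamma-\e}(B_\sigma)}\le M_{i_\infty}$ then gives the desired estimate.

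The main obstacle is the bookkeeping of the dependence on $A$ through the iteration: each step compounds the constant by a factor $(1+A_i)^{d_i}$ where $A_i$ itself is polynomial in $M_i$, so one must verify that $\alpha_{i_\infty}$ remains finite. This is handled exactly as in Step 5 of the proof of Theorem \ref{teo:localalmost}, where convexity of the relevant power functions is used to control the compounded exponents uniformly over the finite range $0\le i\le i_\infty$. The remaining dependencies are then dictated by $i_\infty=i_\infty(N,s,p,\e)$, which matches the claimed dependence of $\sigma$ and $\beta$ in the statement.
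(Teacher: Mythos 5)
Your proposal is correct and follows the same approach as the paper: set $\gamma_0 = 0$, rescale by powers of $2$, normalize by $M_i$, apply Proposition~\ref{prop:improve2III} at each step with $\gamma = \gamma_i$, and track the compounding powers of $A$ by induction, exactly as in Theorem~\ref{teo:localalmost}. The one (minor) difference is that you handle $1<p\leq q\leq 2$ and $1<q\leq p\leq 2$ in one pass by observing $M_i^{q-p}\leq M_i$ from $|q-p|<1$ and $M_i\geq 1$, whereas the paper splits into two cases and in the second case uses $M_i^{q-p}\leq 1$; your unification is slightly cleaner, though it gives a marginally worse $\beta$ in the case $q\leq p$.
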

\par
\begin{proof}  The main point of the proof is to apply Proposition \ref{prop:improve2III} repeatedly . Let $\e\in(0,\Gamma)$ and  define 
$$
\gamma_0=0,\qquad \gamma_{i+1}=sp-\gamma_i(p-2)-\frac{\e(p-1)}{2},
$$
for $i=1,\ldots,j_\infty$, where $j_\infty= j_\infty(s,p,\e)\in\mathbb{N}$ is chosen such that $\gamma_{j_\infty}\geq \Gamma-\e$ and $\gamma_{j_\infty-1}<\Gamma-\e$. Then $\gamma_i$ is an increasing sequence and $\gamma_i\to \frac{sp}{p-1}-\frac{{\e}}{2}$, as $i\to\infty$. For $i=1,\ldots,j_\infty$, we also define
$
v_i(x)=u(2^{-i} x)
$
and
\[\begin{split}
M_i &= 1+\|u\|_{L^\infty(B_{2^{-i}})}+ \mathrm{Tail}_{p-1,s\,p}(u;0,2^{-i})+\mathrm{Tail}_{q-1,t\,q}(u;0,2^{-i})+2^{-i\gamma_i}[u]_{C^{\gamma_i}(B_{2^{-i}})}\\
&\leq C(i)(1+ [v_i]_{C^{\gamma_i}(B_{2^{-i}})}).
\end{split}
\]
Note that $w_i=v_i/M_i$ satisfies
$$
 (-\Delta_p)^s w_i+2^{-i(sp-tq)}M_i^{q-p} A(-\Delta_q)^t w_i=0\text{ in }B_2.
$$
\textbf{Case-I: When $1<p\leq q\leq 2$:}\\
Taking into account \eqref{newbound}, we apply Proposition \ref{prop:improve2III} to $w_i=v_i/M_i$ successively with $\gamma=\gamma_i$, $0\leq i\leq j_\infty$ and $\e$ replaced by $\frac{\e(p-1)}{2}$ and proceeding similarly as in the proof of Theorem \ref{teo:localalmost} to obtain\footnote{Note that in the case $\Gamma=1$, we have $\gamma_{i_\infty-1}<1-\e$ so that Proposition \ref{prop:improve2III} applied with $\gamma=\gamma_{j_\infty-2}$ yields the exponent 
$$
\min(\gamma_{j_\infty-1},1-\e (p-1)/2)=\gamma_{j_\infty-1},
$$
since $\gamma_{j_\infty-1}<1-\e$. Therefore, the iteration scheme is intact up this step.} 
\[
\begin{split}
2^{\gamma_1}[v_1]_{C^{\gamma_1}(B_1)}=[v_0]_{C^{\gamma_1}(B_\frac12)}
\leq CM_0(1+M_0^{q-p}A)^{d_0}\leq C(s,p,t,q,\e,N)(1+A)^{d_0},\\
\end{split}
\]
\[
\begin{split}
2^{\gamma_{i+1}}[v_{i+1}]_{C^{\gamma_{i+1}}(B_1)}&=[v_i]_{C^{\gamma_{i+1}}(B_\frac12)}\leq M_i C(1+2^{-i(sp-tq)}M_i^{q-p} A)^{d_i}\\
&\leq  C(1+A^{d_i})(1+[v_i]_{C^{\gamma_i}(B_1)}^{(q-p){d_i}+1})\\
\end{split}
\]
and finally
\[
\begin{split}
 2^{\Gamma-\e}[v_{j_{\infty}}]_{C^{\Gamma-\e}(B_\frac12)}&\leq [v_{j_{\infty}-1}]_{C^{\min(\gamma_{j_{\infty}},1-\frac{\e(p-1)}{2})}(B_\frac12)}\leq M_{j_\infty-1} C(1+2^{(-j_\infty+1)(sp-tq)}M_{j_\infty-1}^{q-p} A)^{ d_{j_\infty-1} }\\
&\leq  C(1+ A^{d_{j_\infty-1}})(1+[v_{j_\infty-1}]_{C^{\gamma_{j_\infty-1}}(B_1)}^{(q-p){d_{j_\infty-1}}+1}).
\end{split}
\]
As in the proof of Theorem \ref{teo:localalmost} this yields after scaling back
$$
[u]_{C^{\Gamma-\e}(B_{2^{-j_\infty-1}})}=2^{j_\infty(\Gamma-\e)}[v_{j_{\infty}}]_{C^{\Gamma-\e}(B_\frac12)}\leq C(1+A^{\beta}),
$$
where $\beta= \beta(N,s,p,q,\e)\geq 2 $ and $C=C(N,s,p,t,q,\e)>0$. This is the desired result with $\sigma = 2^{-j_\infty-1}$.\\
\textbf{Case-II: When $1<q\leq p\leq 2$:}\\
Taking into account \eqref{newbound}, we apply Proposition \ref{prop:improve2III} to $w_i=v_i/M_i$ successively with $\gamma=\gamma_i$, $0\leq i\leq j_\infty$ and $\e$ replaced by $\frac{\e(p-1)}{2}$ and proceeding similarly as in the proof of Theorem \ref{teo:localalmost} to obtain
\[
\begin{split}
2^{\gamma_1}[v_1]_{C^{\gamma_1}(B_1)}=[v_0]_{C^{\gamma_1}(B_\frac12)}\leq CM_0(1+M_0^{q-p}A)^{d_0}\leq C(s,p,t,q,\e,N)(1+A)^{d_0},
\end{split}
\]
\[
\begin{split}
2^{\gamma_{i+1}}[v_{i+1}]_{C^{\gamma_{i+1}}(B_1)}&=[v_i]_{C^{\gamma_{i+1}}(B_\frac12)}\leq M_i C(1+2^{-i(sp-tq)}M_i^{q-p} A)^{d_i}\\&\leq C(1+A^{d_i})(1+[v_i]_{C^{\gamma_i}(B_1)}) 
\end{split}
\]
and finally
\[
\begin{split}
 2^{\Gamma-\e}[v_{j_{\infty}}]_{C^{\Gamma-\e}(B_\frac12)}&\leq [v_{j_{\infty}-1}]_{C^{\min(\gamma_{j_{\infty}},1-\frac{\e(p-1)}{2})}(B_\frac12)}\leq M_{j_\infty-1} C(1+2^{(-j_\infty+1)(sp-tq)}M_{j_\infty-1}^{q-p} A)^{d_{j_\infty-1}}\\
&\leq  C(1+ A^{d_{j_\infty-1}})(1+[v_{j_\infty-1}]_{C^{\gamma_{j_\infty-1}}(B_1)}).
\end{split}
\]
Here we have used that $q\leq p$ to get rid of the factors containing $M_i$. Iteration of the above inequalities implies, after scaling back
$$
[u]_{C^{\Gamma-\e}(B_{2^{-j_\infty-1}})}=2^{j_\infty(\Gamma-\e)}[v_{j_{\infty}}]_{C^{\Gamma-\e}(B_\frac12)}\leq C(1+A^{\beta}),
$$
where $\beta=\beta(N,s,p,q,\e)\geq 2$ and $C=C(N,s,p,t,q,\e)>0$.  Setting $\sigma = 2^{-j_\infty-1}$, this yields the desired estimate. 
\end{proof}

\subsection{Proof of the main result: Theorem \ref{teo:1higher}}
The proof of the main H\"older regularity result now easily follows. We spell out the details.
\begin{proof}
By \cite[Proposition 3.3]{GKS}, we have $u\in L^{\infty}_{\mathrm{loc}}(\Omega)$.  Without loss of generality, we may assume $x_0=0$.   Let
\[
w(x):=\frac{1}{\mathcal{M}_r}\,u(rx),\qquad \mbox{ for }x\in B_2,
\]
where
\[
\mathcal{M}_r=\|u\|_{L^\infty(B_{r})}+\mathrm{Tail}_{p-1,s\,p}(u;0,r)+\mathrm{Tail}_{q-1,t\,q}(u;0,r)+1>0.
\]
 It is straightforward to see that $w$ solves $(-\Delta_p)^s u+r^{sp-tq}\mathcal{M}_r^{q-p}(-\Delta_q)^t u=0$ in $B_2$ and enjoys the bounds
\begin{equation*}
\label{assumption}
\|w\|_{L^\infty(B_1)}\leq 1,\qquad \int_{\mathbb{R}^N\setminus B_1}\frac{|w(y)|^{l-1}}{|y|^{N+l\,m}}\,  dy\leq 1,
\end{equation*}
where $(l,m)\in\{(p,s),(q,t)\}$.  Theorem \ref{teo:localalmostIII} implies 
\[
[w]_{C^{\Gamma-\e}(B_{\sigma })}\leq C(1+(Ar^{sp-tq}\mathcal{M}_r^{q-p})^{\beta}),
\]
where $C=C(N,s,p,t,q,\e)>0,\, \beta=\beta(N,s,p,q,\e)\geq 1$ and $\sigma=\sigma(\e,s,p)>0$.  Rewritten in terms of $u$, this is the desired estimate. 
\end{proof}

\section{Acknowledgment}
P.G. thanks IISER Berhampur for the seed grant: IISERBPR/RD/OO/2024/15, Date: February 08, 2024. E.L. has been supported by the Swedish Research Council, grant no. 2023-03471.

\medskip
\bibliographystyle{plain}
\bibliography{bibfile}

\begin{thebibliography}{10}

\bibitem{BS25}
Anup Biswas and Aniket Sen.
\newblock Improved {H}\"older regularity of fractional $(p,q)$-{P}oisson
  equation with regular data.
\newblock {\em arXiv preprint, arXiv:2507.09920}, 2025.

\bibitem{biswas2025lipschitz}
Anup Biswas and Erwin Topp.
\newblock Lipschitz regularity of fractional $ p $-{L}aplacian.
\newblock {\em to appear in Annals of PDE}, 2025.

\bibitem{BDLMBS3}
Verena B\"ogelein, Frank Duzaar, Naian Liao, Giovanni Molica~Bisci, and
  Raffaella Servadei.
\newblock Higher regularity theory for {$(s,p)$}-harmonic functions.
\newblock {\em Atti Accad. Naz. Lincei Rend. Lincei Mat. Appl.},
  35(2):311--321, 2024.

\bibitem{BDLMBS1}
Verena B\"ogelein, Frank Duzaar, Naian Liao, Giovanni Molica~Bisci, and
  Raffaella Servadei.
\newblock Gradient regularity for $(s,p)$-harmonic functions.
\newblock {\em Calc. Var. 64, 253 (2025).
  https://doi.org/10.1007/s00526-025-03116-0}, 2025.

\bibitem{BDLMBS2}
Verena B\"ogelein, Frank Duzaar, Naian Liao, Giovanni Molica~Bisci, and
  Raffaella Servadei.
\newblock Regularity for the fractional {$p$}-{L}aplace equation.
\newblock {\em J. Funct. Anal.}, 289(9):Paper No. 111078, 69, 2025.

\bibitem{Brolin}
Lorenzo Brasco and Erik Lindgren.
\newblock Higher {S}obolev regularity for the fractional {$p$}-{L}aplace
  equation in the superquadratic case.
\newblock {\em Adv. Math.}, 304:300--354, 2017.

\bibitem{BLS}
Lorenzo Brasco, Erik Lindgren, and Armin Schikorra.
\newblock Higher {H}\"{o}lder regularity for the fractional {$p$}-{L}aplacian
  in the superquadratic case.
\newblock {\em Adv. Math.}, 338:782--846, 2018.

\bibitem{BKO23}
Sun-Sig Byun, Hyojin Kim, and Jihoon Ok.
\newblock Local {H}\"older continuity for fractional nonlocal equations with
  general growth.
\newblock {\em Math. Ann.}, 387(1-2):807--846, 2023.

\bibitem{BKK}
Sun-Sig Byun, Kyeongbae Kim, and Deepak Kumar.
\newblock Regularity results for a class of nonlocal double phase equations
  with {VMO} coefficients.
\newblock {\em Publ. Mat.}, 68(2):507--544, 2024.

\bibitem{BOS}
Sun-Sig Byun, Jihoon Ok, and Kyeong Song.
\newblock H\"older regularity for weak solutions to nonlocal double phase
  problems.
\newblock {\em J. Math. Pures Appl. (9)}, 168:110--142, 2022.

\bibitem{CKW23}
Jamil Chaker, Minhyun Kim, and Marvin Weidner.
\newblock Harnack inequality for nonlocal problems with non-standard growth.
\newblock {\em Math. Ann.}, 386(1-2):533--550, 2023.

\bibitem{Cozzi}
Matteo Cozzi.
\newblock Regularity results and {H}arnack inequalities for minimizers and
  solutions of nonlocal problems: a unified approach via fractional {D}e
  {G}iorgi classes.
\newblock {\em J. Funct. Anal.}, 272(11):4762--4837, 2017.

\bibitem{DFP}
Cristiana De~Filippis and Giampiero Palatucci.
\newblock H\"older regularity for nonlocal double phase equations.
\newblock {\em J. Differential Equations}, 267(1):547--586, 2019.

\bibitem{DJKTarxiv}
R.~Dhanya, Ritabrata Jana, Uttam Kumar, and Sweta Tiwari.
\newblock Fine boundary regularity for the fractional (p,q)-laplacian.
\newblock {\em arXiv preprint, arXiv:2406.07995}, 2024.

\bibitem{Kuusi1}
Agnese Di~Castro, Tuomo Kuusi, and Giampiero Palatucci.
\newblock Nonlocal {H}arnack inequalities.
\newblock {\em J. Funct. Anal.}, 267(6):1807--1836, 2014.

\bibitem{Kuusi2}
Agnese Di~Castro, Tuomo Kuusi, and Giampiero Palatucci.
\newblock Local behavior of fractional {$p$}-minimizers.
\newblock {\em Ann. Inst. H. Poincar\'{e} C Anal. Non Lin\'{e}aire},
  33(5):1279--1299, 2016.

\bibitem{DKLN}
Lars Diening, Kyeongbae Kim, Ho-Sik Lee, and Simon Nowak.
\newblock Higher differentiability for the fractional {$p$}-{L}aplacian.
\newblock {\em Math. Ann.}, 391(4):5631--5693, 2025.

\bibitem{DN}
Lars Diening and Simon Nowak.
\newblock Calder\'on-{Z}ygmund estimates for the fractional {$p$}-{L}aplacian.
\newblock {\em Ann. PDE}, 11(1):Paper No. 6, 33, 2025.

\bibitem{FZ23}
Yuzhou Fang and Chao Zhang.
\newblock Harnack inequality for the nonlocal equations with general growth.
\newblock {\em Proc. Roy. Soc. Edinburgh Sect. A}, 153(5):1479--1502, 2023.

\bibitem{FZ}
Yuzhou Fang and Chao Zhang.
\newblock On weak and viscosity solutions of nonlocal double phase equations.
\newblock {\em Int. Math. Res. Not. IMRN}, (5):3746--3789, 2023.

\bibitem{FSV22}
Juli\'an Fern\'andez~Bonder, Ariel Salort, and Hern\'an Vivas.
\newblock Interior and up to the boundary regularity for the fractional
  {$g$}-{L}aplacian: the convex case.
\newblock {\em Nonlinear Anal.}, 223:Paper No. 113060, 31, 2022.

\bibitem{FSV23}
Juli\'an Fern\'andez~Bonder, Ariel Salort, and Hern\'an Vivas.
\newblock Global {H}\"older regularity for eigenfunctions of the fractional
  {$g$}-{L}aplacian.
\newblock {\em J. Math. Anal. Appl.}, 526(1):Paper No. 127332, 15, 2023.

\bibitem{GL}
Prashanta Garain and Erik Lindgren.
\newblock Higher {H}\"older regularity for the fractional {$p$}-{L}aplace
  equation in the subquadratic case.
\newblock {\em Math. Ann.}, 390(4):5753--5792, 2024.

\bibitem{GKS3}
Jacques Giacomoni, Deepak Kumar, and K.~Sreenadh.
\newblock H\"older regularity results for parabolic nonlocal double phase
  problems.
\newblock {\em Adv. Differential Equations}, 29(11-12):899--950, 2024.

\bibitem{GKS2}
Jacques Giacomoni, Deepak Kumar, and Konijeti Sreenadh.
\newblock Global regularity results for non-homogeneous growth fractional
  problems.
\newblock {\em J. Geom. Anal.}, 32(1):Paper No. 36, 41, 2022.

\bibitem{GKScrmath}
Jacques Giacomoni, Deepak Kumar, and Konijeti Sreenadh.
\newblock A note on the global regularity results for strongly nonhomogeneous
  {{\(p,q\)}}-fractional problems and applications.
\newblock {\em C. R., Math., Acad. Sci. Paris}, 360:809--817, 2022.

\bibitem{GKS}
Jacques Giacomoni, Deepak Kumar, and Konijeti Sreenadh.
\newblock Interior and boundary regularity results for strongly nonhomogeneous
  {$p,q$}-fractional problems.
\newblock {\em Adv. Calc. Var.}, 16(2):467--501, 2023.

\bibitem{GJS25}
Davide Giovagnoli, David Jesus, and Luis Silvestre.
\newblock ${C}^{1+\alpha}$ regularity for fractional $p$-harmonic functions.
\newblock {\em arXiv preprint, arXiv:2509.26565}, 2025.

\bibitem{Antoni4}
A.~Iannizzotto and S.~Mosconi.
\newblock Fine boundary regularity for the singular fractional
  {$p$}-{L}aplacian.
\newblock {\em J. Differential Equations}, 412:322--379, 2024.

\bibitem{Antoni1}
Antonio Iannizzotto, Sunra Mosconi, and Marco Squassina.
\newblock Global {H}\"{o}lder regularity for the fractional {$p$}-{L}aplacian.
\newblock {\em Rev. Mat. Iberoam.}, 32(4):1353--1392, 2016.

\bibitem{Antoni2}
Antonio Iannizzotto, Sunra Mosconi, and Marco Squassina.
\newblock A note on global regularity for the weak solutions of fractional
  {$p$}-{L}aplacian equations.
\newblock {\em Atti Accad. Naz. Lincei Rend. Lincei Mat. Appl.}, 27(1):15--24,
  2016.

\bibitem{Antoni3}
Antonio Iannizzotto, Sunra J.~N. Mosconi, and Marco Squassina.
\newblock Fine boundary regularity for the degenerate fractional
  {$p$}-{L}aplacian.
\newblock {\em J. Funct. Anal.}, 279(8):108659, 54, 2020.

\bibitem{Kasscvpde}
Moritz Kassmann.
\newblock A priori estimates for integro-differential operators with measurable
  kernels.
\newblock {\em Calc. Var. Partial Differential Equations}, 34(1):1--21, 2009.

\bibitem{KLL25}
Minhyun Kim, Ki-Ahm Lee, and Se-Chan Lee.
\newblock Wolff potential estimates and {W}iener criterion for nonlocal
  equations with {O}rlicz growth.
\newblock {\em J. Funct. Anal.}, 288(1):Paper No. 110690, 51, 2025.

\bibitem{Erik16}
Erik Lindgren.
\newblock H\"{o}lder estimates for viscosity solutions of equations of
  fractional {$p$}-{L}aplace type.
\newblock {\em NoDEA Nonlinear Differential Equations Appl.}, 23(5):Art. 55,
  18, 2016.

\bibitem{SM}
James~M. Scott and Tadele Mengesha.
\newblock Self-improving inequalities for bounded weak solutions to nonlocal
  double phase equations.
\newblock {\em Commun. Pure Appl. Anal.}, 21(1):183--212, 2022.

\bibitem{Silvestre}
Luis Silvestre.
\newblock H\"{o}lder estimates for solutions of integro-differential equations
  like the fractional {L}aplace.
\newblock {\em Indiana Univ. Math. J.}, 55(3):1155--1174, 2006.

\end{thebibliography}

\noindent {\textsf{Prashanta Garain\\Department of Mathematical Sciences\\
Indian Institute of Science Education and Research Berhampur\\ Permanent Campus, At/Po:-Laudigam,\\
Dist.-Ganjam, Odisha, India-760003, 
}\\ 
\textsf{e-mail}: pgarain92@gmail.com\\

\noindent {\textsf{Erik Lindgren\\  Department of Mathematics\\ KTH -- Royal Institute of Technology\\ 100 44, Stockholm, Sweden}  \\
\textsf{e-mail}: eriklin@math.kth.se\\

\end{document}